\documentclass[twoside]{article}

\usepackage{xcolor,booktabs}
\usepackage{float}
\usepackage{comment}
\usepackage{amsmath, amsfonts, amsthm, amssymb, cancel}
\usepackage{dsfont}
\usepackage{enumitem}
\usepackage{graphicx} 
\usepackage[pagebackref=true]{hyperref}
\hypersetup{
    colorlinks=true,
    linkcolor=blue,
    filecolor=magenta,     
    citecolor = teal, 
    urlcolor=cyan,
    pdfpagemode=FullScreen,
    }
\usepackage{geometry}
 \usepackage{orcidlink}
 \usepackage{authblk}
 \usepackage{fancyhdr}
 \usepackage{titling}
\usepackage{subcaption}
\usepackage{multirow}
\usepackage{adjustbox}
\usepackage{array}

\usepackage{caption}

\newcommand{\E}{\mathbb{E}}
\newcommand{\R}{\mathbb{R}}

\newcommand{\Eq}{\uu_{\text{eq}}}

\newcommand{\uu}{{u}}
\newcommand{\dt}{\Delta t}
\newcommand{\TT}{\mathbf{T}}

\renewcommand{\P}{\mathbb{P}}

\newcommand{\ind}[1]{\mathds{1}_{#1}}

\DeclareMathOperator{\Law}{Law}

\newcommand{\cotaMin}{\vartheta_0}
\newcommand{\quo}{\mathcal{Q}}
\newcommand{\varInfty}{\mathbb{V}_\infty}
\newcommand{\Tol}{\text{Tol}}
\renewcommand{\P}{\mathbb{P}}

\newtheorem{theorem}{Theorem}[section]
\newtheorem*{theorem*}{Theorem}
\newtheorem{definition}[theorem]{Definition}
\newtheorem{proposition}[theorem]{Proposition}

\newtheorem{lemma}[theorem]{Lemma}
\newtheorem{assumption}[theorem]{Assumption}
\theoremstyle{remark}
\newtheorem{remark}[theorem]{Remark}

\numberwithin{equation}{section}

\title{Expectation–Maximization algorithm to estimate the forcing parameter of a nonlinear McKean--Vlasov diffusion}

\author{Eduardo Gutiérrez-Turner\orcidlink{0000-0002-2983-0513}}
\affil{Instituto de Estadística, Universidad de Valparaíso. eduardo.gutierrezt@postgrado.uv.cl}
\affil{Universidad Adventista de Chile. eduardogutierrez@unach.cl}

\author{Kerlyns Mart\'inez\orcidlink{0000-0001-5277-6618}}
\affil{Departamento de Ingeniería Matemática, Universidad de Concepción. kermartinez@udec.cl}

\author{Héctor Olivero\orcidlink{0000-0003-3589-0130}}
\affil{Cimfav - Ingemat, Universidad de Valparaíso. hector.olivero@uv.cl}
\date{\today}

\begin{document}
\pagestyle{fancy}
\fancyhead{}
\fancyhead[OC]{Gutiérrez-Turner et al.}
\fancyhead[EC]{ EM algorithm to estimate the forcing parameter of a nonlinear McKean--Vlasov diffusion}
\maketitle
\abstract{
 In this article, we address the problem of estimating a forcing parameter in a stochastic differential equation inspired by a model that describes instantaneous turbulent kinetic energy. The stochastic differential equation we analyze is of the nonlinear McKean–Vlasov type, where the drift term depends on a power of the expected value of the solution, which also introduces nonlinearity in an algebraic sense. We propose an estimation algorithm based on the Expectation–Maximization framework and show the consistency of our method. %Additionally, 
 We illustrate our findings through numerical experiments.
}

\medskip
{\small \noindent \textbf{MSC 2020:} Primary 62M99; secondary 62F12;  \\
\noindent \textbf{Keywords:} Parametric Inference;  McKean--Vlasov diffusions; EM algorithm.}
%%%%%%%%%%%%%%%%%%%%%%%%%%%%%%%%%%%%%
%%%%%%%%%%% BODY %%%%%%%%%%%%%%%%%%%

% !TEX root = main.tex

\section{Introduction}

Nonlinear McKean--Vlasov (MV) diffusions are stochastic differential equations where the coefficients depend not only on time and the state of the system but also on the law of the process itself. Originally introduced by McKean \cite{McKean1966} in the context of gas kinetic theory, McKean--Vlasov equations emerge as the limiting equations for systems of interacting particles/individuals when their number goes to infinity (see \cite{Sznitman1991,Meleard:1996aa,ChaintronDiez2022a,ChaintronDiez2022b}). This formalism, which is known as propagation of chaos, allows one to derive global models starting from local interactions, and it has been applied in very different contexts such as ecology \cite{videla2025persistence}, stochastic modelling in fluid mechanics \cite{pope1994lagrangi}, game theory \cite{CarmonaDelarue2018,CarmonaDelarue2018b}, neuroscience \cite{Bossy:2019aa, quininao2020clamping, crevat2019mean,colombani2023propagation} and population genetics \cite{Cordero:2026aa,lambert2026evolution}, to name a few. We are interested in a parametrized MV SDEs with the structure
\begin{equation}\label{eq:generic-MVSDE}
    dx_t = \Phi(\theta,t,x_t,\mu_t)dt + \Sigma(t,x_t,\mu_t)dB_t,
\end{equation}
where $\mu_t = \Law(x_t)$, $B$ is a Brownian motion and $\theta$ is a parameter, possibly multidimensional. The mathematical analysis when $\Phi$ and $\Sigma$ are Lipschitz with linear growth is now classical, and in recent years, well-posedness and numerical approximation have been addressed for coefficients with less regularity and super-linear growth. See for example \cite{chen2025,kumar2021explicit,kumar2022well}.

%In terms of statistical inference, although for SDEs in the classical It\^o setting is now well established (see Kessler \cite{Kessler1997}, Kutoyants \cite{Kutoyants2004}, S\o rensen \cite{Sorensen2004} and Fuchs \cite{Fuchs2013}), inference for $\theta$ in \eqref{eq:generic-MVSDE} have been addressed by several authors in recent years. 
Statistical inference for $\theta$ in \eqref{eq:generic-MVSDE} have been addressed by several authors in recent years. Under continuous-time observation, and for linear dependency of $(\Phi,\Sigma)$ in $\mu_t$, Wen et al. \cite{Wen2016} propose a maximum likelihood estimator (MLE) under Lipschitz assumptions on $\Phi$, with $\Sigma \equiv 1$; Genon-Catalot and Laredo \cite{GenonCatalotLaredo2021} study the case when \eqref{eq:generic-MVSDE} is one-dimensional with polynomial growth in the state variable; and Liu and Qiao \cite{LiuQiao2022} study an MLE when $\Phi$ and $\Sigma$ are path-dependent, with linear growth, but not Lipschitz. Sharrock et al. \cite{Sharrock2023} consider the case where $\Sigma$ depends only on the state variable $x_t$, and under suitable hypotheses on $\Phi$ the authors propose online and offline estimators built from samples of several realizations of either \eqref{eq:generic-MVSDE} or its associated particle system. 
Under discrete-time observation, and Lipschitz dependency  of $(\Phi,\Sigma)$ in $\mu_t$, Ren and Wu \cite{RenWu2021} address the path-dependent case with Lipschitz coefficients, and propose a least-squares-type estimator; meanwhile, Amorino et al. \cite{Amorino2023} address joint estimation of drift and diffusion coefficients from discrete observations of the particle system associated with \eqref{eq:generic-MVSDE}. As we will see below, in our case, $\Sigma$ is linear in the state variable, but $\Phi$ has a nonlinear dependency in the law of the process. In addition, we will assume that we are only observing one trajectory of \eqref{eq:generic-MVSDE}, and that we do not have access to its associated particle system. Therefore, we are outside the framework of all these works mentioned above.

In this article, we introduce a type of expectation-maximization algorithm for estimating a forcing parameter in a McKean--Vlasov diffusion, in which the drift depends on the law of the process through a power of its first moment. Our method is developed to work with time-discrete observations of a single trajectory of the model over a finite time horizon $[0,T]$, making it suitable for experimental data. At each E-step, the nonlinear mean-field term is approximated by its current estimate, yielding a linear SDE that is then used in the M-step to update the parameter estimate. Moreover, its computational cost per iteration is linear in the size of the data. In addition, it converges in a moderate number of iterations on synthetic data. In contrast to previous approaches, we do not make use of likelihood representations through Girsanov-type transformations, which do not seem easy to generalize to our context. 

Closer to our approach, Pavliotis and Zanoni proposed to linearize the nonlinear McKean SDE by replacing the law of the process with its (unique) invariant measure, reducing the inference problem to a more tractable setting in which the parameter can be estimated from a single observed particle as the number of particles and the time horizon tend to infinity. Building on this idea, they have developed several estimators along these lines, including a martingale estimator based on the eigenvalues and eigenfunctions of the generator of the linearized mean field limit \cite{Pavliotis2022}; a method-of-moments estimator for systems with polynomial drift and interaction functions, in which the moments of the invariant distribution are approximated by time-averages of the observed trajectory \cite{Pavliotis2024}; and, more recently, a linearized maximum likelihood estimator for the resulting stationary process \cite{Pavliotis2025}. Despite this proximity in approach, our setting differs from theirs in several respects. First, in Pavliotis and Zanoni's work, stationarity is a structural requirement, while in our setting the algorithm operates in the transient regime, treating the mean path as a time-dependent unobserved quantity to be recovered over a finite horizon $[0,T]$; for us, stationarity enters only as a technical device within the proof of consistency as $T \to \infty$. Second, while the McKean SDE in Pavliotis and Zanoni's work arises only as the $N \to \infty$ mean field limit of an underlying particle system, in our setting the McKean-type SDE \eqref{eq:SM1} is itself the observation model. Third, although our model \eqref{eq:SM1} is considerably simpler in other respects, its mean-field coupling lies outside the class of interactions considered in \cite{Pavliotis2022, Pavliotis2024, Pavliotis2025}: the drift in \eqref{eq:SM1} includes a power of the first moment, while the methods in \cite{Pavliotis2022, Pavliotis2024, Pavliotis2025} are built on pairwise interaction kernels acting through convolution with the density of the solution, which can only be expanded into linear combinations of moments rather than powers of them.

\paragraph{Model}
We consider a fixed filtered probability space $(\Omega, \mathcal{F}, (\mathcal{F}_t)_{t \geq 0}, P)$, which is assumed to satisfy the usual conditions. Let $W = (W_t)_{t \geq 0}$ be a standard Brownian motion defined on this space. We focus on the following stochastic differential
equation, parametrized by $\theta \in \mathbb{R}_+$:

\begin{equation}\label{eq:SM1}
    X_t = X_0 + \int_0^t \big(\theta-c(\E[X_s])^{p-1}X_s\big)ds + \int_0^t\sigma X_sdW_s,
\end{equation}
where $c,\sigma \in \mathbb{R_+}$, and $p>1$ are known. In the rest of the paper we assume that $\E[X_0]<\infty$.

This equation is inspired by the reduced model for the instantaneous turbulent kinetic energy proposed by the authors in \cite{Bossy2022}, where the dependence on the distribution is encoded via powers of the first marginal moment. Notice, as in the Ornstein-Uhlenbeck process, if the forcing parameter $\theta=0$, the mean of the process goes to $0$ as $t$ goes to infinity. 

In the following proposition we establish the basic properties of the model. We postpone its proof to Appendix \ref{app:preliminaries_proofs}. 

\begin{proposition}\label{prop:well-posedness-and-properties-NLSDE}
Let $X_0\in\R_+$ be a positive initial condition, independent of $W$, with finite expectation and let $T>0$ be an arbitrary time horizon. Then: 
\begin{enumerate}[label=(\roman*)]
\item (Well-posedness and possitivity) The McKean-Vlasov SDE \eqref{eq:SM1} admits a unique positive strong solution $(X_t)_{0\le t \le T}$.  \label{Pro:WellPosedness}
\item\label{pro:Finite Moments} 
    The solution to the MVSDE \eqref{eq:SM1} satisfies:
    \begin{enumerate}
        \item {\bf Moment control in finite time intervals}: For all $\gamma \ge0$, and finite time horizon $T>0$, if $X_0\in L^\gamma(\Omega)$, then
        \begin{equation}\label{eq:uniform-moments-in-finite-intervals}
            \E\left[\sup_{0\le s\le T} X_s^{-\gamma}\right]+\E\left[\sup_{0\le s\le T} X_s^{\gamma}\right] < \infty. 
        \end{equation}
        \item {\bf Uniform control of the first moment:}
        \begin{equation}\label{eq:uniform-first-moment}
            \sup_{ t\geq 0}\E\left[ X_t\right] < \infty.
        \end{equation}

    \end{enumerate}
\end{enumerate}

\end{proposition}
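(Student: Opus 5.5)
The key observation is that taking expectations in \eqref{eq:SM1} gives an autonomous ODE for the mean. I will first solve that ODE, then treat \eqref{eq:SM1} as a \emph{linear} SDE driven by a known deterministic function, and finally close a fixed-point argument.

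\textbf{Step 1: the mean ODE.} Any solution of \eqref{eq:SM1} with integrable paths must have $m_t:=\E[X_t]$ solving
\begin{equation*}
m_t' = \theta - c\, m_t^{p}, \qquad m_0=\E[X_0]>0,
\end{equation*}
since the stochastic integral is a true martingale; this will be justified a posteriori via the moment bounds of Step 3. On $(0,\infty)$ the vector field $m\mapsto \theta-cm^p$ is locally Lipschitz. It is positive below $m^*=(\theta/c)^{1/p}$ and negative above it. Hence the ODE has a unique global solution, which stays in the interval between $m_0$ and $m^*$, so $0<\min(m_0,m^*)\le m_t\le \max(m_0,m^*)$. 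This already gives \eqref{eq:uniform-first-moment} once we identify $m_t=\E[X_t]$.

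\textbf{Step 2: the linear SDE and the fixed point.} Given the deterministic continuous function $m$ from Step 1, set $a_t=c\,m_t^{p-1}$ and consider the linear SDE
\begin{equation*}
dY_t=(\theta-a_tY_t)\,dt+\sigma Y_t\,dW_t .
\end{equation*}
It has the explicit solution
\begin{equation*}
Y_t=Z_t\Big(X_0+\theta\int_0^t Z_s^{-1}ds\Big), \qquad Z_t=\exp\Big(-\int_0^t a_s\,ds-\tfrac{\sigma^2}{2}t+\sigma W_t\Big).
\end{equation*}
This solution is strong, pathwise unique (Lipschitz coefficients), and strictly positive because $X_0>0$ and $\theta\ge0$. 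Its mean satisfies the linear ODE $\mu'=\theta-a_t\mu$ with $\mu_0=m_0$. The function $m$ also satisfies it, since $\theta-a_tm_t=\theta-cm_t^p$, so by linear ODE uniqueness $\E[Y_t]=m_t$. Thus $Y$ solves \eqref{eq:SM1}, which gives existence.

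For uniqueness, I take any solution $X$ and note that its mean must solve the ODE of Step 1. For this I need $\E[X_t]$ finite and the martingale term to vanish, which I obtain by localization and Fatou, using that $s\mapsto \E[X_s]^{p-1}$ is implicitly assumed finite and continuous for the equation to make sense. By uniqueness for that ODE, the mean of $X$ equals $m$. Then $X$ solves the same linear SDE as $Y$, so $X=Y$.

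\textbf{Step 3: moments.} From the explicit formula, $\sup_{s\le T}Y_s^\gamma\le C\sup_s Z_s^\gamma\,(X_0+\theta T\sup_s Z_s^{-1})^\gamma$. Since $a$ is bounded on $[0,T]$, $\sup_{s\le T}Z_s^{\pm q}$ has all moments by Gaussian exponential moments of $\sup|W|$. Hölder's inequality together with independence of $X_0$ and $W$ then gives the positive moment bound, provided $X_0\in L^\gamma$.

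For negative moments, I use $Y_s\ge Z_sX_0$, which gives $Y_s^{-\gamma}\le Z_s^{-\gamma}X_0^{-\gamma}$. This bound requires $\E[X_0^{-\gamma}]<\infty$, which I read into the hypothesis ``$X_0\in L^\gamma$'' (or trivially when $X_0$ is a deterministic positive constant, as in the statement $X_0\in\R_+$).

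\textbf{Main obstacle.} The main obstacle is the uniqueness step: one must show rigorously that an arbitrary solution has a mean satisfying the ODE. That requires integrability of $X$, which in turn I would get from Gronwall on $\E[X_{t\wedge\tau_n}]$, using positivity to drop the $-a_sX_s$ term, and then letting $n\to\infty$.
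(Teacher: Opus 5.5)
Your proposal is correct and follows essentially the same route as the paper. Both solve the mean ODE and freeze the coefficient $c\,u_t^{p-1}$ to get a linear SDE with an explicit, strictly positive solution. Both identify its mean with $u$ by uniqueness for the linear ODE $\mu'=\theta-c\,u_t^{p-1}\mu$ (the paper does this through the identity for $g^2$ with $g=\E[X_t]-u_t$), and both read the moment bounds and the uniform first-moment bound off the explicit formula and the ODE bounds.

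Two of your additions are welcome refinements. First, your uniqueness argument at the McKean--Vlasov level is one the paper omits entirely. Note that Fatou alone only gives $\E[X_t]\le \E[X_0]+\theta t$; the exact mean equation then follows by treating $X$ as the solution of the linear SDE with the bounded deterministic coefficient $c\,\E[X_s]^{p-1}$. Second, you correctly observe that the negative-moment bound requires $\E[X_0^{-\gamma}]<\infty$, which the paper uses only implicitly.
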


\paragraph{Main result}

The proposed method can be interpreted as an Expectation--Maximization (EM) algorithm with two stages: first, the estimation of the first marginal moment, and second, a maximum likelihood update of the parameter. This procedure defines an estimator $\hat{\theta}^{(k,T,\Delta t)}$ for the forcing parameter $\theta$, whose consistency is established in the following result as the sample size, time horizon, and number of EM iterations tend to infinity.

\begin{theorem*}[Consistency of the EM algorithm estimator]
Under Assumption \ref{hip:sign_theta} bellow, for every $\epsilon>0$,
\begin{equation*}
\lim_{k\to\infty}\lim_{T\to\infty}\lim_{\dt\to0} \P\left(\left|\hat{\theta}^{(k,T,\dt)}-\theta\right| > \epsilon \right) =0.
\end{equation*}

\end{theorem*}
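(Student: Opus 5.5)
The plan is to take the three limits in the order they appear, reducing the statement to the convergence of a deterministic fixed-point iteration. I assume the algorithm works as follows. Given the current iterate $\hat\theta^{(k)}$, the E-step computes the mean path $m_k$ by solving the moment ODE $m_k'(t)=\hat\theta^{(k)}-c\,m_k(t)^p$, obtained by taking expectations in \eqref{eq:SM1}. The M-step then maximizes the Girsanov-free discretized likelihood of the frozen linear SDE $dX_t=(\theta-c\,m_k(t)^{p-1}X_t)dt+\sigma X_t dW_t$ with respect to $\theta$. Since the diffusion coefficient is $\sigma X$, the update has the explicit form
\begin{equation*}
\hat\theta^{(k+1)}=\frac{\sum_i X_{t_i}^{-2}(X_{t_{i+1}}-X_{t_i})+c\sum_i m_k(t_i)^{p-1}X_{t_i}^{-1}\dt}{\sum_i X_{t_i}^{-2}\dt}.
\end{equation*}

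\textbf{Step 1 ($\dt\to0$).} Using the moment bounds \eqref{eq:uniform-moments-in-finite-intervals} on $\sup_{s\le T}X_s^{\pm\gamma}$, the Riemann and Itô sums converge in probability to their continuous counterparts. The inverse moments are what make the denominator and the integrands controllable. Plugging in the true dynamics then gives
\begin{equation*}
\hat\theta^{(k+1,T)}=\theta+c\,\frac{\int_0^T\big(m_k(t)^{p-1}-m(t)^{p-1}\big)X_t^{-1}dt}{\int_0^T X_t^{-2}dt}+\sigma\frac{\int_0^T X_t^{-1}dW_t}{\int_0^T X_t^{-2}dt},
\end{equation*}
where $m(t)=\E[X_t]$ solves the same ODE with $\theta$. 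An induction on $k$ handles the fact that $m_k$ depends on the data: continuity of the ODE solution in its parameter passes convergence of $\hat\theta^{(k)}$ to $m_k$.

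\textbf{Step 2 ($T\to\infty$).} The ODE $m'=\vartheta-cm^p$ converges to $(\vartheta/c)^{1/p}$, and \eqref{eq:uniform-first-moment} keeps $m$ bounded. Hence the drift coefficient $c\,m(t)^{p-1}$ tends to $\kappa:=c(\theta/c)^{(p-1)/p}$. Asymptotically $X$ therefore behaves like the ergodic linear diffusion $dY=(\theta-\kappa Y)dt+\sigma YdW$. Its invariant law is inverse gamma: $1/Y\sim\mathrm{Gamma}(\alpha,\beta)$ with $\alpha=1+2\kappa/\sigma^2$ and $\beta=2\theta/\sigma^2$.

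I would couple $X$ with $Y$ by using the same noise and show $|X_t^{-1}-Y_t^{-1}|\to0$ in $L^1$ (Cesàro sense). This works because $1/X$ satisfies an SDE whose coefficient discrepancy vanishes. Ergodic averages then give $\frac1T\int X^{-1}\to\alpha/\beta$ and $\frac1T\int X^{-2}\to\alpha(\alpha+1)/\beta^2$. The martingale term divided by $\int X^{-2}$ tends to $0$ by the law of large numbers for martingales. The limit is the deterministic recursion
\begin{equation*}
\vartheta_{k+1}=F(\vartheta_k),\qquad F(x)=\theta+c\Big(\big(\tfrac{x}{c}\big)^{\frac{p-1}{p}}-\big(\tfrac{\theta}{c}\big)^{\frac{p-1}{p}}\Big)\frac{\theta}{\sigma^2+\kappa},
\end{equation*}
using $\beta/(\alpha+1)=\theta/(\sigma^2+\kappa)$.

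\textbf{Step 3 ($k\to\infty$).} The map $F$ has $\theta$ as a fixed point and is increasing and concave. Its derivative satisfies $F'(\theta)=\frac{p-1}{p}\frac{\kappa}{\sigma^2+\kappa}<1$, and $F'(x)$ decreases in $x$. So for $x\ge\theta$ the iteration contracts with factor at most $F'(\theta)$, and for $x<\theta$ iterates increase monotonically toward $\theta$ as long as they stay positive. I expect Assumption \ref{hip:sign_theta} to guarantee positivity of the iterates, and hence well-definedness of $x^{(p-1)/p}$, for instance through a condition on the starting value. Then $\vartheta_k\to\theta$, and combined with Steps 1–2 this yields the statement.

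\textbf{Main obstacle.} I expect the main obstacle to be Step 2. It needs ergodic limits for the non-stationary, non-autonomous process $X$ whose drift coefficient only converges asymptotically, together with control of the data-dependence of $m_k$ uniformly in $T$. I would handle it through the coupling with $Y$ plus a continuity-in-parameter argument in $\hat\theta^{(k)}$.
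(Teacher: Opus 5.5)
Your architecture is the paper's. Step 1 is Proposition \ref{e1:convergence-in-dt} (induction on $k$ through the parameter dependence of the ODE). Step 2 is Proposition \ref{e2:convergence-in-T}, built on Lemma \ref{lem:Q12}. Your map $F$ is exactly the recursion of Lemma \ref{lemma:integrals-dynamics-vs-equilbrium}, $F(x)=\theta\,(c^{1/p}x^{1-1/p}+\sigma^2)/(c^{1/p}\theta^{1-1/p}+\sigma^2)$. The uniform-in-$T$ control you flag as the main obstacle is Lemma \ref{prop:ode_properties}\ref{lem:distance-between-ode-solutions}, a time-uniform Lipschitz bound coming from stability of the equilibrium, combined with the a priori bound of Lemma \ref{lemma:a-priori-control-estimator-surrogate}.

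Two sub-arguments differ from the paper's.
\begin{itemize}
\item \emph{Ergodic averages.} The paper does not couple with the limiting diffusion. After a time $\bar t_\epsilon$ it sandwiches $X$, by the comparison theorem, between linear diffusions with constant rates $c(\Eq\pm\epsilon)^{p-1}$, applies the ergodic theorem to each, and lets $\epsilon\to0$. Your coupling also works, and is cleanest pathwise. With $Y_0=X_0$, the explicit solutions give $X_t/Y_t\to1$ a.s., because $\int_0^\infty|c\,m(s)^{p-1}-\kappa|\,ds<\infty$ by the exponential convergence of the ODE. This transfers the ergodic limits of $Y^{-1}$ and $Y^{-2}$ to $X$. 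The justification you sketch via the SDE for $Z=1/X$ is thin, since its drift $(c\,m^{p-1}+\sigma^2)Z-\theta Z^2$ is not evidently $L^1$-contractive.
\item \emph{The recursion.} The paper uses $|x^{1-1/p}-\theta^{1-1/p}|\le\theta^{-1/p}|x-\theta|$ to get the global factor $\kappa/(\sigma^2+\kappa)$, plus sign propagation for monotonicity. Your concavity argument is equally valid and gives the sharper local rate $\frac{p-1}{p}\frac{\kappa}{\sigma^2+\kappa}$.
\end{itemize}

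\textbf{The gap is positivity at finite $T$, which you have placed in the wrong step.} In Step 3 it is automatic. $F$ is increasing, so $F(x)\ge F(0)=\theta\sigma^2/(\sigma^2+\kappa)>0$, and starting from $0$ the iterates increase inside $(0,\theta)$; no assumption is needed.

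Positivity is needed in Steps 1--2. Your comparison object $\hat\theta^{(k,T)}$ contains $\sigma\int_0^TX_s^{-1}dW_s/\int_0^TX_s^{-2}ds$. Unlike the actual M-step, whose truncation $\max\{\cdot,0\}$ you dropped, nothing keeps it nonnegative for fixed $T$. If it is negative, the next E-step ODE $m'=\vartheta-cm^p$ with $\vartheta<0$ can reach $0$ before time $T$. Then the following M-step, and your induction, are undefined.

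Assumption \ref{hip:sign_theta} does not repair this. It concerns the discrete first iterate on the observed path, and through Lemma \ref{prop:basics-of-true-estimator} it keeps the \emph{discrete} iterates positive, not the continuous-time ones.

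The paper supplies the missing ingredient as Proposition \ref{prop:thetaT-is-eventually-positive}. Since $\quo(T)\to\theta\sigma^2/(c^{1/p}\theta^{1-1/p}+\sigma^2)>0$ a.s. and the added term is nonnegative, there is a finite random $\TT$ such that $\hat\theta^{(k,T)}$ is bounded below by a positive constant for all $k$ and all $T\ge\TT$. All estimates are then carried out on $\{T\ge\TT\}$, and the extra term $\P(T<\TT)\to0$ closes the final decomposition.

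Two other fixes would also work:
\begin{itemize}
\item keep the truncation in the continuous-time estimator as well; or
\item run your induction directly between the truncated discrete iterates and the deterministic $\vartheta_k$.
\end{itemize}
Either way, every E-step ODE has a nonnegative parameter and is globally well posed.
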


The remainder of the paper is organized as follows: In Section \ref{sec:algorithm_convergence} we outline the estimation procedure and establishes the consistency of the proposed statistic. In Section \ref{sec:numerical_experiments} we illustrate our findings through numerical experiments. In Section \ref{sec:closing} we summarize our results, and propose some posible extensions. Finally in the Appendix \ref{sec:appendix} we include the most technical proofs.

\section{Estimation Method and Convergence Analysis}\label{sec:algorithm_convergence}

In this work, we propose a methodology for estimating the parameter $\theta$. The parameter $c>0$ is assumed to be known due to identifiability issues in the drift structure, while the estimation of $\sigma$ can be carried out independently via a standard quadratic variation approach. Indeed, the solution to \eqref{eq:SM1} satisfies
\[
\langle X \rangle_T = \sigma^2 \int_0^T X_t^2 \, dt.
\]
Under high-frequency observations of $X$, this leads to the estimator
\[
\hat{\sigma}_n^2
=
\frac{\sum_{i=0}^{n-1}(X_{t_{i+1}}-X_{t_i})^2}
{\sum_{i=0}^{n-1} X_{t_i}^2 (t_{i+1}-t_i)},
\]
whose consistency and asymptotic normality are classical (see, e.g., \cite{dohnal1987estimating, Miao2004}).

%In the proofs throughout the paper, the parameter $\sigma$ is assumed to be known, while the estimation proposed for $\theta$ is carried out independently of this parameter.

In the proofs throughout the paper, since the estimation proposed for $\theta$ is carried out independently of the parameter $\sigma$, we will  assume it to be known.

\subsection{Moment-EM Algorithm} 

We assume that the process $X$ is observed on a regular time grid over a finite time horizon $[0,T]$. More precisely, we observe the process at discrete times over $[0,T]$,
\[
\mathcal{D}=\{X_{t_i}\}_{i=0}^n, \qquad 0=t_0<t_1<\cdots<t_n=T,
\]
where the observation grid has size $\Delta t = \max_{1\le i\le n}(t_{i} - t_{i-1})$. We assume we also observe the initial mean $u_0 := \E[X_0]$.

An Expectation--Maximization type algorithm is proposed to estimate the forcing parameter $\theta$. The procedure iterates between two steps: an E-step, which approximates the unobserved mean path, and an M-step, which updates the estimate of $\theta$ via maximum likelihood based on this first marginal moment. EM-type algorithms have also been applied to other diffusion estimation problems with a different latent-variable structure, such as the integrated and noise-contaminated observations considered in \cite{Baltazar2010}, where the unobserved quantity is the continuous-time path of the diffusion rather than its mean path.

We initialize the algorithm by considering an initial value $\hat{\theta}^{(0,T, \dt)} =0$. Then, at each iteration $k$ we follow:

\subsubsection{E-Step: Solving the Mean ODE}
We compute an approximation of the mean function, $(\uu_t^{(k, T, \Delta t)})_{0\le t\le T}$, using the most recent parameter estimate, $\hat{\theta}^{(k-1,T,\dt)}$. This is achieved by numerically solving the non-linear ODE:
$$ \dfrac{d}{dt}u^{(k,T,\dt)}_t = \hat{\theta}^{(k-1,T,\dt)} - c (u^{(k,T,\dt)}_t)^p, \quad u_0=\E[X_0],$$
over the time interval $[0, T]$. 

\begin{remark}
We assume that the numerical error associated with the approximation of the ODE solution in this step is negligible, as it can be computed with sufficiently high accuracy using standard numerical methods.
\end{remark}

\subsubsection{M-Step: Pseudo-Maximum Likelihood Estimator}
The M-step consists of updating the parameter $\theta^{(k,T,\Delta t)}$ by maximizing the likelihood of the observed data under model \eqref{eq:SM1}, given the current approximation of the mean path. 

By fixing the current mean path, the model reduces to a linear SDE, for which the transition density can in principle be identified explicitly. However, for simplicity and computational efficiency, we instead maximize a pseudo-likelihood constructed from the Euler–Maruyama discretization. This approximation is known to provide reliable estimators under high-frequency observations and is widely used in the statistical inference of diffusion processes.

The discretization implies that the increment $\Delta X_i:=X_{t_{i+1}} - X_{t_i}$ is approximately normally distributed\[\Delta X_i \,|\, \mathcal{F}_{t_i} \sim \mathcal{N}\Big((\theta - c(u_{i}^{(k)})^{p-1}X_i)\Delta t_i,\quad  \sigma^2 X_i^2 \Delta t_i\Big).\]

Maximizing the corresponding log-pseudo-likelihood function with respect to $\theta$ in the parameter space $\Theta = \R_+$, yields the updated estimator for the $(k+1)$-th iteration: 
\begin{equation}
    \label{eq:SM1 Estimador MV}
    \hat{\theta}^{(k,T,\dt)} =  \max\left\{\quo(T,\dt)+ c\frac{\sum_{i=0}^{n-1} \frac{({u}_{t_i}^{(k,T,\dt)})^{p-1}}{X_{t_i}}\Delta t_i}{  \sum_{i=0}^{n-1} X_{t_i}^{-2} \Delta t_i},\quad 0\right\},
\end{equation}
where $\quo(T,\dt):= \frac{\sum_{i=0}^{n-1}X_{t_i}^{-2}\Delta X_{t_i} }{  \sum_{i=0}^{n-1} X_{t_i}^{-2} \Delta t_i}$, depends only on the data.

A key characteristic of the moment-EM algorithm is that it preserves positivity.

\begin{lemma}\label{prop:basics-of-true-estimator}
Assume there exist $T_0>0$ and $\dt_0>0$ such that $\hat{\theta}^{(1,T_0,\dt_0)}>0$. Then, the sequence $\{\hat{\theta}^{(k,T_0,\dt_0)}\}_{k\ge1}$ is increasing and strictly positive. 
\end{lemma}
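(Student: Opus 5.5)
Fix $T_0,\dt_0$ and drop them from the notation: write $\hat\theta^{(k)}$, $u^{(k)}$, and $F(v) := \quo + c\,\frac{\sum_i v_{t_i}^{p-1}X_{t_i}^{-1}\Delta t_i}{\sum_i X_{t_i}^{-2}\Delta t_i}$ for a nonnegative path $v$. Then $\hat\theta^{(k)} = \max\{F(u^{(k)}),0\}$, and $u^{(k)}$ solves $\dot u = \hat\theta^{(k-1)} - c u^p$ with $u_0=\E[X_0]$. The data $X_{t_i}$ are strictly positive by Proposition \ref{prop:well-posedness-and-properties-NLSDE}\ref{Pro:WellPosedness}. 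Since $p>1$, the map $F$ is monotone nondecreasing in $v$ pointwise on the grid. So the whole argument reduces to showing that the mean paths $u^{(k)}$ increase pointwise in $k$. I would do this by induction, combining an ODE comparison principle with the monotonicity of $F$ and of $\max\{\cdot,0\}$.

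\textbf{Step 1 (comparison lemma).} If $0\le a\le b$, the solutions of $\dot u = a - c u^p$ and $\dot w = b - c w^p$ with the same $u_0>0$ satisfy $u_t\le w_t$ on $[0,T_0]$. The right-hand side is locally Lipschitz on $(0,\infty)$, and solutions stay positive: at $u=0$ the drift is $a\ge 0$, and $u_0>0$. Set $d=u-w$. Then $\dot d = (a-b) - c(u^p-w^p) \le -c(u^p-w^p)$, and a Grönwall argument on $d^+$ (using the local Lipschitz bound on the bounded positive range of the solutions) gives $d^+\equiv 0$. I also want strict inequality when $a<b$. At $t=0$ we have $\dot d(0)=a-b<0$, so $d<0$ just after $0$. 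If $d$ returned to $0$ at a first time $\tau>0$, then $\dot d(\tau)=a-b<0$, contradicting $d(s)<0$ for $s<\tau$. Hence $u_t<w_t$ for all $t\in(0,T_0]$.

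\textbf{Step 2 (induction).} Base case: $\hat\theta^{(0)}=0$ and $\hat\theta^{(1)}>0$ by hypothesis, so $\hat\theta^{(1)}\ge\hat\theta^{(0)}$. Induction step: suppose $\hat\theta^{(k)}\ge\hat\theta^{(k-1)}\ge 0$. Step 1 gives $u^{(k+1)}_t\ge u^{(k)}_t>0$ for all $t$. Because $x\mapsto x^{p-1}$ is increasing on $\R_+$ and the weights $X_{t_i}^{-1}\Delta t_i$ are positive, $F(u^{(k+1)})\ge F(u^{(k)})$. Taking $\max\{\cdot,0\}$ preserves this order, so $\hat\theta^{(k+1)}\ge\hat\theta^{(k)}$. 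Strict positivity then follows immediately: $\hat\theta^{(k)}\ge\hat\theta^{(1)}>0$ for all $k\ge1$.

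If the word ``increasing'' is meant strictly, the chain propagates strictness. Since $\hat\theta^{(k)}>\hat\theta^{(k-1)}$, Step 1 gives $u^{(k+1)}_{t_i}>u^{(k)}_{t_i}$ for every grid point $t_i>0$, and there is at least one such point. This makes $F(u^{(k+1)})>F(u^{(k)})$. Moreover $F(u^{(k)})=\hat\theta^{(k)}>0$, so the $\max$ is inactive and the strict inequality survives.

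\textbf{Main obstacle.} The only delicate point is Step 1. One must ensure the ODE solutions stay positive, so that $u^{p-1}$ is defined and monotone, and exist on all of $[0,T_0]$. This holds because the drift is $\ge 0$ near $u=0$ and $\le 0$ for large $u$, so the solution stays in $[\min(u_0,(a/c)^{1/p}),\max(u_0,(a/c)^{1/p})]$. That confinement also supplies the Lipschitz constant needed for Grönwall.
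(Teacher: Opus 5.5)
Your proof is correct and follows essentially the same route as the paper: an ODE comparison gives $u^{(k)}\le u^{(k+1)}$ pointwise, then monotonicity of $F$ in the mean path and of $\max\{\cdot,0\}$ gives monotonicity of $\hat\theta^{(k)}$, and positivity follows from $\hat\theta^{(k)}\ge\hat\theta^{(1)}>0$. Your self-contained weak comparison in Step 1 is slightly tighter than the paper's appeal to Lemma~\ref{prop:ode_properties}\ref{lem:distance-between-ode-solutions}: it closes the induction under non-strict hypotheses, and it covers the first step with $\hat\theta^{(0)}=0$, which that lemma, being stated for positive parameters, does not formally include; note only that your optional strict version needs $n\ge 2$, so that some grid point $t_i>0$ actually enters the sums.
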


\begin{proof} 
Define
\[
F_k := \quo(T_0,\dt_0)+ c\frac{\sum_{i=0}^{n-1} \frac{({u}_{t_i}^{(k,T_0,\dt_0)})^{p-1}}{X_{t_i}}\Delta t_i}{\sum_{i=0}^{n-1} X_{t_i}^{-2} \Delta t_i},
\]
so that $\hat{\theta}^{(k,T_0,\dt_0)} = \max\{F_k,\ 0\}.$

From Lemma \ref{prop:ode_properties} (iii), if
\[
\hat{\theta}^{(k-1,T_0,\dt_0)}<\hat{\theta}^{(k,T_0,\dt_0)},
\]
then for all $t_i>0$,
\[
{u}_{t_i}^{(k,T_0,\dt_0)}<{u}_{t_i}^{(k+1,T_0,\dt_0)}.
\]

Since $p\ge1$, $c>0$ and ${u}_{\cdot}^{(k,T_0,\dt_0)}$ and $X$ are strictly positive:
\[
F_k \le F_{k+1}.
\]
Thus, for any $k\geq1$
\[
\hat{\theta}^{(k,T_0,\dt_0)} = \max\{F_k,\ 0\} \le \max\{F_{k+1},\ 0\} = \hat{\theta}^{(k+1,T_0,\dt_0)},
\]
i.e. $\{\hat{\theta}^{(k,T_0,\dt_0)}\}_{k\ge0}$ is non-decreasing.

Moreover, if $\hat{\theta}^{(1,T_0,\dt_0)}>0$, then $F_1>0$. Hence, for all $k\ge1$,
\[
F_k \ge F_1 > 0 \;\Rightarrow\; \hat{\theta}^{(k,T_0,\dt_0)} = F_k > 0.
\]
\end{proof}

In practice, the observed trajectory is fixed, which also fixes the frequency  $\dt$ and terminal time $T$. This means that if, for some iteration $k\ge1$, we find $ \hat{\theta}^{(k,T,\dt)} = 0 $, then it follows that $ \hat{\theta}^{(m,T,\dt)} = 0$ for all $ m \geq k $. This outcome is, of course, undesirable so we consider the following assumption:

\begin{assumption}\label{hip:sign_theta}
Assume that, for the observed trajectory $X(\omega)$, with observation frequency $\dt$ and horizon time $T$, we have \[\hat{\theta}^{(1,T,\dt)}(\omega)>0.\]
\end{assumption}

Under Assumption \ref{hip:sign_theta}, we ensure from Lemma \ref{prop:basics-of-true-estimator} the estimator remains positive throughout all iterations of the algorithm. It can be shown however that this assumption holds in probability for sufficiently small frequency and sufficiently large time horizon.

\begin{lemma}\label{cor:the-estimator-is-eventually-positive}
For any $k\ge1$, we have
$$
\lim_{T\to\infty}\lim_{\dt\to0}\P\left(\hat{\theta}^{(k,T,\dt)}   >  0 \right)  = 1.
$$
\end{lemma}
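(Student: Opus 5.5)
The idea is to reduce the statement to $k=1$, pass to the limit $\dt\to0$ to get a continuous-time functional, and then show that this functional converges as $T\to\infty$ to a strictly positive deterministic constant.

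\textbf{Reduction to $k=1$.} Write $F_k(T,\dt)$ for the quantity inside the maximum in \eqref{eq:SM1 Estimador MV}, so that $\hat\theta^{(k,T,\dt)}=\max\{F_k,0\}$. The starting value is $\hat\theta^{(0,T,\dt)}=0\le\hat\theta^{(1,T,\dt)}$. An induction based on the comparison property of Lemma \ref{prop:ode_properties} (iii) gives $u^{(k,T,\dt)}\ge u^{(1,T,\dt)}$ pointwise. Since $p>1$, $c>0$ and $X>0$, this yields $F_k\ge F_1$ for every $k\ge1$. Hence $\{F_1>0\}\subset\{\hat\theta^{(k,T,\dt)}>0\}$, and it suffices to show $\lim_{T}\lim_{\dt}\P(F_1(T,\dt)>0)=1$.

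\textbf{Limit $\dt\to0$.} Note that $u^{(1)}$ solves $\dot u=-cu^p$ with $u_0=\E[X_0]$, so it is deterministic and decays to $0$. The continuity of $X$, the moment bounds \eqref{eq:uniform-moments-in-finite-intervals}, and standard convergence of Riemann and Itô sums give, in probability,
\[
F_1(T,\dt)\longrightarrow \theta - c\,\frac{\int_0^T \E[X_s]^{p-1}X_s^{-1}ds}{\int_0^T X_s^{-2}ds}+\sigma\,\frac{M_T}{\langle M\rangle_T}+c\,\frac{\int_0^T (u^{(1)}_s)^{p-1}X_s^{-1}ds}{\int_0^T X_s^{-2}ds}=:F_1(T).
\]
Here $M_t=\int_0^t X_s^{-1}dW_s$ and $\langle M\rangle_T=\int_0^T X_s^{-2}ds$.

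\textbf{Limit $T\to\infty$.} I would first show that $\E[X_t]\to u_{\rm eq}:=(\theta/c)^{1/p}$. The mean solves $\dot u=\theta-cu^p$, and $u_{\rm eq}$ is its globally attracting equilibrium. Then, with $a:=c\,u_{\rm eq}^{p-1}$, I would argue that $X$ is asymptotically the time-homogeneous linear diffusion $dY=(\theta-aY)dt+\sigma Y\,dW$. This diffusion is ergodic, with an inverse-gamma invariant law $\pi$. Applying Itô's formula to $1/Y$ and using stationarity gives the identity
\[
\theta\,\E_\pi[Y^{-2}]=(a+\sigma^2)\,\E_\pi[Y^{-1}].
\]
Assuming the ergodic averages transfer from $Y$ to $X$, we get $\frac1T\int_0^T X^{-2}\to\E_\pi[Y^{-2}]>0$ and $\frac1T\int_0^T\E[X_s]^{p-1}X_s^{-1}ds\to u_{\rm eq}^{p-1}\E_\pi[Y^{-1}]$. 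The remaining terms vanish:
\begin{itemize}
\item $\langle M\rangle_T\to\infty$, so $M_T/\langle M\rangle_T\to0$ by the strong law of large numbers for martingales;
\item since $u^{(1)}_s\to0$ and $p>1$, the Cesàro average $\frac1T\int_0^T(u_s^{(1)})^{p-1}X_s^{-1}ds\to0$.
\end{itemize}
Combining these,
\[
F_1(T)\to\theta-\frac{a\theta}{a+\sigma^2}=\frac{\theta\sigma^2}{a+\sigma^2}>0
\]
in probability, which yields the claim.

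\textbf{Main obstacle.} The hard step is justifying the ergodic averages for the non-stationary, time-inhomogeneous process $X$. I would couple $X$ with $Y$ driven by the same Brownian motion. Both are explicit linear SDEs, with $X_t=\Phi_t\big(X_0+\theta\int_0^t\Phi_s^{-1}ds\big)$, where $\Phi$ is the corresponding stochastic exponential. Since $\E[X_t]^{p-1}\to a$, the ratio $X_t/Y_t$ should tend to $1$ almost surely. Together with the negative-moment bounds, this would let me replace $X^{-1},X^{-2}$ by $Y^{-1},Y^{-2}$ in the Cesàro means.
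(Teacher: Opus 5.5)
Your proposal is correct, and its outer skeleton coincides with the paper's. You bound the estimator below by a $k$-independent statistic, let $\dt\to0$ via convergence of Riemann and It\^o sums, show the continuous-time statistic converges to $\theta\sigma^2/(c^{1/p}\theta^{1-1/p}+\sigma^2)>0$, and conclude by splitting.

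Two things differ. First, your reduction to $F_1$ through monotonicity in $k$ is valid (the ordering in Lemma~\ref{prop:ode_properties}~(iii) persists for $\alpha=0$), but it is unnecessary. Since $\hat\theta^{(k-1,T,\dt)}\ge0$ keeps $u^{(k,T,\dt)}$ positive, the paper simply drops the mean-path term and gets $\hat\theta^{(k,T,\dt)}\ge\quo(T,\dt)$ for every $k$. This spares you from showing that the $u^{(1)}$ contribution vanishes.

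Second, and more substantively, the ergodic step differs; the paper isolates it in Lemmas~\ref{lem:Q12} and~\ref{lem:convergence-of-QuoT}. There, after a time $\bar t_\epsilon$, $X$ is sandwiched by the comparison theorem between two constant-coefficient linear diffusions with coefficients $c(\Eq\mp\epsilon)^{p-1}$. The ergodic theorem and the explicit inverse-gamma negative moments are applied to each, and then $\epsilon\to0$. Your synchronous coupling with the limiting diffusion $Y$ also works, and you can close the ``should'' as follows. Set $\Psi_t:=e^{-(a+\sigma^2/2)t+\sigma W_t}$ and $g_t:=c\int_0^t(u_r^{p-1}-\Eq^{p-1})\,dr$ with $u_r=\E[X_r]$. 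The explicit formulas give
\[
\frac{X_t}{Y_t}=e^{-g_t}\,\frac{X_0+\theta\int_0^t e^{g_s}\Psi_s^{-1}\,ds}{Y_0+\theta\int_0^t\Psi_s^{-1}\,ds}.
\]
Here $g_t\to g_\infty\in\R$ by the exponential rate in Lemma~\ref{prop:ode_properties}~(ii), and $\int_0^t\Psi_s^{-1}ds\to\infty$ a.s., so $X_t/Y_t\to1$ a.s. The transfer of the Ces\`aro means of $X^{-1},X^{-2}$ to those of $Y^{-1},Y^{-2}$ is then purely pathwise, so the negative-moment bounds you invoke are not needed.

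Compared with the paper, your route avoids the comparison theorem and the $\epsilon$-limit. The price is reliance on the explicit linear solution formula and on the integrability of $u^{p-1}-\Eq^{p-1}$. Your It\^o identity $\theta\,\E_\pi[Y^{-2}]=(a+\sigma^2)\E_\pi[Y^{-1}]$ is an equivalent substitute for reading the ratio off Remark~\ref{rem:neg_moments_bounding}.
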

\begin{proof}
Since $\hat{\theta}^{(k-1,T,\dt)}\geq0$, we notice that
\[\hat\theta^{(k,T,\dt)} \ge \quo(T,\dt).\]

Hence, $\P\left(\hat{\theta}^{(k,T,\dt)} \le  0 \right) \le \P\left(\quo(T,\dt) \leq  0 \right)$. So we focus on the estimation of the later probability. 

First, from the properties of stochastic integrals (see, e.g., \cite[Sec. I.7, II.4]{Protter1992}), we have
\[\quo(T,\dt)\rightarrow \quo(T):= \frac{\int_0^T\frac{dX_t}{X_t^2}}{\int_0^T\frac{1}{X_t^2}dt},\quad\text{ in probability, as $\dt\to0$}.\]

Further, from Lemma \ref{lem:convergence-of-QuoT}, we know that the random variable $\quo(T)$ converges to
$$
\quo_\infty:=\frac{\theta\sigma^2}{c^{1/p}\theta^{1-1/p}+\sigma^2}>0,\quad a.s.\text{ as }T\rightarrow\infty.
$$

Therefore
\begin{align*}
\P\left(\quo(T,\dt) \leq  0 \right) &= \P\left(\quo(T,\dt) \leq  0 , |\quo(T,\dt)-\quo(T)|>\quo_\infty/2\right) \\
&\quad + \P\left(\quo(T,\dt) \leq  0 , |\quo(T,\dt)-\quo(T)|\leq \quo_\infty/2\right)    \\
  & \leq \P\left(|\quo(T,\dt)-\quo(T)|>\quo_\infty/2\right)  + \P\left(\quo(T) \leq \quo_\infty/2\right),
\end{align*}
with both terms in the right hand side of the inequality converging to 0 when $\dt\to0$ and $T\to\infty.$
We conclude that, for any $k\geq1$ fixed:
$$
\lim_{T\to\infty}\lim_{\dt\to0}\P\left(\hat{\theta}^{(k,T,\dt)}   \leq  0 \right)  = 0.
$$

\end{proof}

\subsection{Convergence of Moment-EM estimator}

Before stating our main result, we make the following assumption:

\begin{assumption}\label{ass:data}
The dataset $\mathcal{D}$ corresponds to a direct observation of the solution of the model \eqref{eq:SM1}.
\end{assumption}

The convergence of the Moment-EM estimator is established in the asymptotic regime where the number of iterations and the observation horizon tend to infinity, while the mesh size of the observation grid tends to zero. To prove consistency, we consider these limits in an iterated way, for which we introduce the following surrogate estimators, serving as convenient analytical proxies for studying the asymptotic behavior of the estimator:

\begin{definition} \label{def:surrogate_definition}
Let $(X_{t})_{t\geq0}$ be the solution of the MVSDE \eqref{eq:SM1}. The continuously sampled estimator at iteration $k\ge1$, is defined as $\hat{\theta}^{(0,T)}=0$ and
\begin{equation}\label{eq:first-surrogate-estimator}
\begin{aligned}
    \hat{\theta}^{(k,T)}& = \theta  +  \frac{\sigma\int_0^T  \frac{1}{X_s}dW_s }{\int_0^T  \frac{1}{X_s^2}ds }+ \frac{c\int_{0}^T \frac{({u}_{s}^{(k,T)})^{p-1}-\uu_s^{p-1}}{X_s}ds}{\int_0^T  \frac{1}{X_s^2}ds},\\
    u_t^{(k,T)} &= \uu_0+ \int_0^t\Big(\hat{\theta}^{(k-1,T)}-c\left(u^{(k,T)}_s\right)^p\Big)ds.
\end{aligned}
\end{equation}
\end{definition}

\medskip

Notice the reminicense with the estimator proposed by Hu and Nualart in \cite{HU20101030}.
In the following propositions we establish the properties of $\hat{\theta}^{(k,T)}$ and $\hat{\theta}^{(k,T,\dt)}$. Their proofs are postponed to the Appendices \ref{ap:proofs-convergence-surrogate-estimators} and \ref{app:on-discrete-estimator}.

\begin{proposition}
\label{prop:thetaT-is-eventually-positive} There exists a positive and finite random variable $\TT$ and a constant $\cotaMin:=\cotaMin(\theta,c,\sigma,p)$, such that for all $k\geq1$ the continuously sampled estimator satisfies 
$$\hat{\theta}^{(k,T)}(\omega)\geq \cotaMin,\; \forall\,T\geq \TT(\omega).$$
\end{proposition}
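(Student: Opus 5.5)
Since $u^{(k,T)}\ge 0$ (it solves the ODE with nonnegative forcing $\hat\theta^{(k-1,T)}\ge 0$ and positive initial datum $u_0$; cf. Lemma \ref{prop:ode_properties}), the last term in \eqref{eq:first-surrogate-estimator} satisfies
\[
\frac{c\int_0^T \frac{(u_s^{(k,T)})^{p-1}}{X_s}ds}{\int_0^T X_s^{-2}ds}\ \ge 0 .
\]
So I will first extract a $k$-independent lower bound:
\[
\hat\theta^{(k,T)} \ \ge\ \theta + \frac{\sigma\int_0^T X_s^{-1}dW_s}{\int_0^T X_s^{-2}ds} - \frac{c\int_0^T \uu_s^{p-1}X_s^{-1}ds}{\int_0^T X_s^{-2}ds}.
\]
The right-hand side is exactly the continuous-time quotient $\quo(T)$. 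Indeed, using $dX_s=(\theta-c\uu_s^{p-1}X_s)ds+\sigma X_s dW_s$, we get $\int_0^T X_s^{-2}dX_s=\theta\int_0^T X_s^{-2}ds-c\int_0^T\uu_s^{p-1}X_s^{-1}ds+\sigma\int_0^T X_s^{-1}dW_s$. Hence $\hat\theta^{(k,T)}\ge \quo(T)$ for every $k\ge1$ and every $T$, and the right-hand side no longer depends on $k$. This is the key reduction.

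Next I invoke Lemma \ref{lem:convergence-of-QuoT}, already used in the proof of Lemma \ref{cor:the-estimator-is-eventually-positive}, which gives $\quo(T)\to\quo_\infty=\theta\sigma^2/(c^{1/p}\theta^{1-1/p}+\sigma^2)>0$ almost surely. Set $\cotaMin:=\quo_\infty/2$, which depends only on $(\theta,c,\sigma,p)$. Then define
\[
\TT(\omega):=\inf\{T_0>0:\ \quo(T)(\omega)\ge \cotaMin\ \text{for all } T\ge T_0\}.
\]
Almost-sure convergence makes $\TT$ finite a.s., and on the null set one can redefine it arbitrarily, e.g.\ $\TT=1$. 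Positivity is automatic, or can be enforced by taking a maximum with $1$. Measurability follows because $T\mapsto\quo(T)$ has continuous paths on $(0,\infty)$, so the infimum can be taken over rationals. With these choices, $\hat\theta^{(k,T)}\ge\quo(T)\ge\cotaMin$ for all $k\ge1$ and $T\ge\TT$.

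The main subtlety is justifying the nonnegativity of $u^{(k,T)}$ uniformly in $k$. I will handle it by induction on $k$: $\hat\theta^{(0,T)}=0$ gives $u^{(1,T)}>0$, since the ODE $\dot u=-cu^p$ with $u_0>0$ stays positive. Then $\hat\theta^{(1,T)}\ge\quo(T)$, which is positive for $T\ge\TT$. For $T<\TT$, however, $\hat\theta^{(k-1,T)}$ could be negative, since the continuous surrogate has no $\max\{\cdot,0\}$. To avoid this, I only need the argument for $T\ge\TT$. There, inductively, $\hat\theta^{(k-1,T)}\ge\cotaMin>0$ implies $u^{(k,T)}>0$ (the ODE with positive forcing starting at $u_0>0$ stays positive, by Lemma \ref{prop:ode_properties}), which gives $\hat\theta^{(k,T)}\ge\quo(T)\ge\cotaMin$. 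This closes the induction with the same $\TT$ for all $k$.
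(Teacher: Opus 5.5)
Your proposal is correct and follows essentially the paper's route: you drop the nonnegative term $c\int_0^T (u_s^{(k,T)})^{p-1}X_s^{-1}ds/\int_0^T X_s^{-2}ds$ to get $\hat{\theta}^{(k,T)}\geq\quo(T)$, use the almost sure convergence $\quo(T)\to\quo_\infty>0$ from Lemma \ref{lem:convergence-of-QuoT}, and induct on $k$ (for $T\geq\TT$ only) so that $u^{(k,T)}$ is well defined and positive. Your version is slightly tidier, because a single $\TT$ and the constant $\cotaMin=\quo_\infty/2$ (which equals the paper's $\hat{\theta}^{(1)}/2$) are both tied to $\quo(T)$, whereas the paper defines $\TT$ through $\hat{\theta}^{(1,T)}$ in the base case and then, in the inductive step, uses the $\TT$ of Lemma \ref{lem:convergence-of-QuoT} without reconciling the two.
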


\begin{proposition}
\label{e2:convergence-in-T} The following iterated limit holds almost surely:
$$
\lim_{k\to\infty}\lim_{T\to\infty }\hat{\theta}^{(k,T)} = \theta.
$$
Moreover,
$$
\sqrt{\frac{ T}{\varInfty}} \left(\hat{\theta}^{(k,T)} -\theta-\frac{c\int_{0}^T \frac{({u}_{s}^{(k,T)})^{p-1}-\uu_s^{p-1}}{X_s}ds}{\int_0^T  \frac{1}{X_s^2}ds}\right)\Rightarrow \mathcal{N}(0,1),\quad\text{ as $T\to\infty$,} 
$$
where
\begin{equation}\label{eq:def-Var-infty}
\varInfty = \frac{2\theta^2\sigma^2}{(2c^{1/p}\theta^{1-1/p}+\sigma^2)(c^{1/p}\theta^{1-1/p}+\sigma^2)}.
\end{equation}
\end{proposition}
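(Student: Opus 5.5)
We need to prove Proposition \ref{e2:convergence-in-T}: the almost sure iterated limit, and the CLT for the martingale term. The plan is to first handle the martingale part, then the mean-path ODEs, and finally the fixed-point iteration in $k$.

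\textbf{Martingale term and CLT.} Write
\[
M_T:=\sigma\int_0^T X_s^{-1}\,dW_s, \qquad \langle M\rangle_T=\sigma^2\int_0^T X_s^{-2}\,ds .
\]
The key input is an ergodic statement for $X$. Since $\uu_t\to \Eq:=(\theta/c)^{1/p}$ (this follows from Lemma \ref{prop:ode_properties}), the process $X$ is asymptotically a time-homogeneous linear SDE with drift $\theta-c\Eq^{p-1}x$ and diffusion $\sigma x$. That limiting SDE has an inverse-gamma invariant law. I would use a coupling/comparison argument to show
\[
\frac1T\int_0^T X_s^{-2}\,ds\to m_{-2}, \qquad \frac1T\int_0^T X_s^{-1}\,ds\to m_{-1}\quad \text{a.s.},
\]
where $m_{-j}$ are the inverse moments of the invariant law. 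This is the same mechanism underlying Lemma \ref{lem:convergence-of-QuoT}.

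From it, $\langle M\rangle_T\to\infty$ linearly, so the strong law for martingales gives $M_T/\int_0^T X_s^{-2}ds\to0$ a.s. The martingale CLT gives $M_T/\sqrt{T}\Rightarrow\mathcal N(0,\sigma^2 m_{-2})$. Dividing by $T^{-1}\int_0^T X_s^{-2}ds\to m_{-2}$ yields asymptotic variance $\sigma^2/m_{-2}$. Computing $m_{-2}$ from the inverse-gamma law, with shape $1+2c\Eq^{p-1}/\sigma^2$ and scale $2\theta/\sigma^2$, should reproduce $\varInfty$. This proves the second display.

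\textbf{Limit in $T$ for fixed $k$.} I would argue by induction on $k$ that
\[
\hat\theta^{(k,T)}\to\theta_k \ \text{a.s.}, \qquad \theta_k:=\theta+c\,\frac{m_{-1}}{m_{-2}}\Bigl((\theta_{k-1}/c)^{(p-1)/p}-\Eq^{p-1}\Bigr),\quad \theta_0=0 .
\]
For the induction step, $u^{(k,T)}$ solves the ODE with constant forcing $\hat\theta^{(k-1,T)}$. This forcing converges a.s. and, by Proposition \ref{prop:thetaT-is-eventually-positive}, is bounded below by $\cotaMin$ for $T\ge\TT$. The ODE is contracting, with rate roughly $pc\,\cotaMin^{(p-1)/p}$ near equilibrium, so the Cesàro average
\[
\frac1T\int_0^T\frac{(u_s^{(k,T)})^{p-1}-\uu_s^{p-1}}{X_s}\,ds
\]
differs from $\bigl((\hat\theta^{(k-1,T)}/c)^{(p-1)/p}-\Eq^{p-1}\bigr)\frac1T\int_0^T X_s^{-1}ds$ by a term of order $T^{-1}\int_0^T e^{-\lambda s}X_s^{-1}ds\to0$. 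The subtlety is that the forcing depends on $T$. I would handle it with a Lipschitz estimate of the ODE solution in its forcing, uniformly in time, from Lemma \ref{prop:ode_properties}. The bound $\sup_t\uu_t<\infty$ from Proposition \ref{prop:well-posedness-and-properties-NLSDE} keeps everything bounded.

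\textbf{Limit in $k$ (main obstacle).} It remains to show $\theta_k\to\theta$ for the map
\[
g(x)=\theta+\kappa\bigl(x^{(p-1)/p}-\theta^{(p-1)/p}\bigr), \qquad \kappa:=c^{1/p}\,\frac{m_{-1}}{m_{-2}} .
\]
This map is increasing and concave with fixed point $\theta$. Starting from $0$, the iterates increase, which is consistent with Lemma \ref{prop:basics-of-true-estimator}, and they stay bounded by $\theta$. So they converge to the smallest fixed point in $(0,\theta]$.

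The crux is to exclude a fixed point below $\theta$. By concavity, $g(x)-x$ has at most two zeros, so it suffices to check $g'(\theta)\ge1$ fails, i.e. $g'(\theta)<1$, together with $g(0)>0$. Equivalently:
\begin{itemize}
\item $g(0)>0$, i.e. $\kappa\,\theta^{(p-1)/p}<\theta$;
\item $g'(\theta)<1$, i.e. $\kappa\frac{p-1}{p}\theta^{-1/p}<1$.
\end{itemize}
I would compute $m_{-1}/m_{-2}$ explicitly from the inverse-gamma law, using the shape and scale above. With $a:=c^{1/p}\theta^{1-1/p}$, this gives
\[
\frac{m_{-1}}{m_{-2}}=\frac{\theta}{a+\sigma^2}\cdot\frac{2a+\sigma^2}{2a+\sigma^2}\ \text{(up to verification)}, \qquad \kappa\,\theta^{(p-1)/p}=\frac{a\,\theta}{a+\sigma^2}<\theta .
\]
This gives $g(0)=\quo_\infty>0$, matching Lemma \ref{cor:the-estimator-is-eventually-positive}. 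Together with concavity and $g(\theta)=\theta$, the bound $g(0)>0$ rules out any fixed point in $(0,\theta)$: a concave function $g(x)-x$ that is positive at $0$ and zero at $\theta$ is positive on $(0,\theta)$. Hence $\theta_k\uparrow\theta$, which proves the almost sure iterated limit.
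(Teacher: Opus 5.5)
Your proof is correct. Its architecture matches the paper's: the martingale strong law and CLT for $\sigma\int_0^T X_s^{-1}dW_s$, ergodic limits of $\frac1T\int_0^T X_s^{-j}ds$ through the inverse-gamma law, a fixed-$k$ limit in $T$, and then a deterministic recursion in $k$. The CLT part is identical. Your $\sigma^2/m_{-2}$ is exactly $\varInfty$, and with $a=c^{1/p}\theta^{1-1/p}$ your ratio is $m_{-1}/m_{-2}=\theta/(a+\sigma^2)=Q_{12}$; the extra factor you wrote is identically $1$.

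You depart from the paper in two places.

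\emph{Limit in $T$.} The paper compares $u^{(k,T)}$ with the path $u^{(k)}$ driven by the limiting forcing $\hat\theta^{(k-1)}$, using the uniform-in-time bound of Lemma \ref{prop:ode_properties}(iii). It then iterates the error inequality $|\hat\theta^{(k,T)}-\hat\theta^{(k)}|\le R_1(T)+M(k,T)\,|\hat\theta^{(k-1,T)}-\hat\theta^{(k-1)}|+R_2(k,T)$. You instead compare $u^{(k,T)}$ with its own equilibrium $(\hat\theta^{(k-1,T)}/c)^{1/p}$, using the exponential estimate of Lemma \ref{prop:ode_properties}(ii). Its rate is uniform in $T\ge\TT$ thanks to $\cotaMin$, and you conclude by continuity of the recursion. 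This is somewhat more direct.

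\emph{Limit in $k$.} The paper uses $|x^{1-1/p}-\theta^{1-1/p}|\le|x-\theta|/\theta^{1/p}$ to obtain a contraction with factor $Q_{12}/\Eq=a/(a+\sigma^2)<1$. This gives the geometric rate of Remark \ref{rem:geometric-convergence-theta-k}. Your monotone-iteration argument for the increasing concave map $g$ needs only $g(0)=\quo_\infty>0$, but it gives convergence without a quantitative rate. Your second bullet is redundant, since $g'(\theta)<1$ already follows from concavity, $g(\theta)=\theta$ and $g(0)>0$. Still, $g'(\theta)=\frac{p-1}{p}\,\frac{a}{a+\sigma^2}$ is the exact asymptotic rate of $\theta-\theta_k$, which is sharper than the paper's contraction factor.

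One small point needs fixing: the base step $k=1$. There the forcing is $\hat\theta^{(0,T)}=0$, which is not bounded below by $\cotaMin$. Also, $u^{(1)}_t=[u_0^{-(p-1)}+c(p-1)t]^{-1/(p-1)}$ decays only algebraically, so your exponential-contraction step does not apply. However, $u^{(1)}$ does not depend on $T$ and $(u^{(1)}_s)^{p-1}\to 0$. So the plain Ces\`aro argument of Lemma \ref{lem:Q12}(ii) handles this case, and your inductive argument applies verbatim from $k=2$ on.
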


\begin{proposition}
\label{e1:convergence-in-dt} It holds for any $\epsilon>0$:
$$
\lim_{\dt\to0}\P\left( \left| \hat{\theta}^{(k,T,\dt)}- \hat{\theta}^{(k,T)}\right| > \epsilon , T\geq \TT  \right) = 0.
$$
\end{proposition}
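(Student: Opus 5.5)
We prove Proposition \ref{e1:convergence-in-dt} by induction on $k$, with $T$ fixed throughout and only $\dt\to0$. The inductive claim is that $\hat{\theta}^{(k,T,\dt)}\to\hat{\theta}^{(k,T)}$ in probability on the event $\{T\ge\TT\}$.

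First we put the continuous estimator in the same form as the discrete one. Substituting $dX_s=(\theta-c\uu_s^{p-1}X_s)ds+\sigma X_s dW_s$ into \eqref{eq:first-surrogate-estimator} gives
\[
\hat{\theta}^{(k,T)}=\quo(T)+c\,\frac{\int_0^T \frac{(u_s^{(k,T)})^{p-1}}{X_s}ds}{\int_0^T X_s^{-2}ds}.
\]
On $\{T\ge\TT\}$, Proposition \ref{prop:thetaT-is-eventually-positive} gives $\hat{\theta}^{(k,T)}\ge\cotaMin>0$. Since $x\mapsto\max\{x,0\}$ is $1$-Lipschitz, it therefore suffices to show that $F_k(T,\dt)$, the expression inside the max in \eqref{eq:SM1 Estimador MV}, converges in probability to the display above.

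The convergence splits into three parts.

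\textbf{Quotient term.} $\quo(T,\dt)\to\quo(T)$ in probability. This is the stochastic-integral convergence already invoked in the proof of Lemma \ref{cor:the-estimator-is-eventually-positive}.

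\textbf{Denominator.} $\sum_i X_{t_i}^{-2}\Delta t_i\to\int_0^T X_s^{-2}ds$ almost surely. This is a Riemann-sum argument: paths are continuous and strictly positive on $[0,T]$, and \eqref{eq:uniform-moments-in-finite-intervals} gives a.s. finiteness of $\sup_{s\le T}X_s^{-1}$. The limit is strictly positive, so taking ratios is harmless.

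\textbf{Numerator.} Write
\[
\sum_i \frac{(u^{(k,T,\dt)}_{t_i})^{p-1}}{X_{t_i}}\Delta t_i-\int_0^T\frac{(u^{(k,T)}_s)^{p-1}}{X_s}ds
\]
as the sum of two pieces:
\begin{itemize}
\item a Riemann-sum error for the continuous function $s\mapsto (u_s^{(k,T)})^{p-1}/X_s$, which tends to zero almost surely;
\item the term $\sum_i \bigl[(u^{(k,T,\dt)}_{t_i})^{p-1}-(u^{(k,T)}_{t_i})^{p-1}\bigr]X_{t_i}^{-1}\Delta t_i$, bounded by $T\sup_{s\le T}X_s^{-1}\cdot\sup_{t\le T}\bigl|(u^{(k,T,\dt)}_t)^{p-1}-(u^{(k,T)}_t)^{p-1}\bigr|$.
\end{itemize}

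\textbf{Control of the ODE difference (main step).} We need to bound the sup of the ODE difference in terms of the parameter difference. Both $u^{(k,T,\dt)}$ and $u^{(k,T)}$ solve $\dot u=\vartheta-cu^p$ with the same $u_0>0$, with $\vartheta$ equal to $\hat{\theta}^{(k-1,T,\dt)}$ and $\hat{\theta}^{(k-1,T)}$ respectively. For $\vartheta\ge0$ the solutions stay in a compact subinterval of $(0,\infty)$. Concretely, $u_t$ lies between $\min(u_0,\cdot)$ and $\max(u_0,(\vartheta/c)^{1/p})$ for $\vartheta>0$. For $\vartheta=0$ the solution decays, but stays bounded below by the explicit solution on $[0,T]$.

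On such a set, $u\mapsto u^{p-1}$ and $u\mapsto u^p$ are Lipschitz. Gronwall then gives
\[
\sup_{t\le T}|u^{(k,T,\dt)}_t-u^{(k,T)}_t|\le T e^{L T}|\hat{\theta}^{(k-1,T,\dt)}-\hat{\theta}^{(k-1,T)}|,
\]
where $L$ depends on a bound for the parameters. This is the continuous-dependence statement of the type in Lemma \ref{prop:ode_properties}.

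To make the bound usable we localize: on the event $\{|\hat{\theta}^{(k-1,T,\dt)}-\hat{\theta}^{(k-1,T)}|\le1\}$, whose probability tends to one by the induction hypothesis, the constant $L$ is random but a.s. finite. A random a.s.-finite constant times a quantity tending to zero in probability still tends to zero in probability. This gives the inductive step.

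The base case is $k=1$: both updates use $\vartheta=0$, so the two mean paths coincide and only the first two parts are needed.

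Finally, combine the three parts via continuity of the ratio map, using that the denominator limit is positive. Intersecting with $\{T\ge\TT\}$ then yields $\P(|\hat{\theta}^{(k,T,\dt)}-\hat{\theta}^{(k,T)}|>\epsilon,\,T\ge\TT)\to0$.
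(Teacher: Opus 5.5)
Your proof is correct and follows essentially the same route as the paper: you split the error into the $\quo(T,\dt)\to\quo(T)$ term, the Riemann-sum errors, and a mean-path term bounded by a stochastically bounded factor times $|\hat{\theta}^{(k-1,T,\dt)}-\hat{\theta}^{(k-1,T)}|$, and you close by recursion in $k$ from $\hat{\theta}^{(0,T,\dt)}=\hat{\theta}^{(0,T)}=0$. The differences are only technical: you use Gronwall in place of Lemma \ref{prop:ode_properties}-\ref{lem:distance-between-ode-solutions}, you localize on $\{|\hat{\theta}^{(k-1,T,\dt)}-\hat{\theta}^{(k-1,T)}|\le 1\}$ via the induction hypothesis instead of invoking the a priori bound of Lemma \ref{lemma:a-priori-control-estimator}, and you handle the truncation at $0$ explicitly through the $1$-Lipschitz property of $x\mapsto\max\{x,0\}$.
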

Now we proceed to state and proof the main result of this article:
\begin{theorem}[Consistency of the Moment-EM estimator]
%Let us assume that $\hat{\theta}^{(1,T,\dt)}>0$, then for all $\epsilon>0$,
Under Assumption \ref{hip:sign_theta},  for all $\epsilon>0$,
\begin{equation}\label{eq:main-convergence}
\lim_{k\to\infty}\lim_{T\to\infty}\lim_{\dt\to0} \P\left(\left|\hat{\theta}^{(k,T,\dt)}-\theta\right| > \epsilon \right) =0.
\end{equation}

\end{theorem}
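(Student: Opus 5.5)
The proof splits the estimation error through the continuously sampled surrogate of Definition \ref{def:surrogate_definition}. For fixed $k$, $T$ and $\epsilon>0$, the triangle inequality gives
\begin{align*}
\P\left(\left|\hat{\theta}^{(k,T,\dt)}-\theta\right|>\epsilon\right)
&\le \P\left(\left|\hat{\theta}^{(k,T,\dt)}-\hat{\theta}^{(k,T)}\right|>\epsilon/2,\ T\ge\TT\right)\\
&\quad+\P\left(T<\TT\right)+\P\left(\left|\hat{\theta}^{(k,T)}-\theta\right|>\epsilon/2\right).
\end{align*}
The first term on the right tends to $0$ as $\dt\to0$ by Proposition \ref{e1:convergence-in-dt}. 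The remaining two terms do not depend on $\dt$. Therefore
\[
\limsup_{\dt\to0}\P\left(\left|\hat{\theta}^{(k,T,\dt)}-\theta\right|>\epsilon\right)\le \P(T<\TT)+\P\left(\left|\hat{\theta}^{(k,T)}-\theta\right|>\epsilon/2\right).
\]
Assumption \ref{hip:sign_theta}, via Lemma \ref{prop:basics-of-true-estimator}, ensures the discrete iterates stay positive, so the $\max\{\cdot,0\}$ is inactive. This is consistent with the surrogate, which has no truncation and is bounded below by $\cotaMin$ once $T\ge\TT$ (Proposition \ref{prop:thetaT-is-eventually-positive}). In any case, this positivity is already absorbed in Proposition \ref{e1:convergence-in-dt}.

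Next we let $T\to\infty$. Since $\TT$ is a finite random variable, $\P(T<\TT)\to0$. For the last term, Proposition \ref{e2:convergence-in-T} gives almost sure existence of $\theta_k:=\lim_{T\to\infty}\hat{\theta}^{(k,T)}$ for each $k$, with $\theta_k\to\theta$ as $k\to\infty$. Almost sure convergence implies convergence in probability, so
\[
\limsup_{T\to\infty}\P\left(\left|\hat{\theta}^{(k,T)}-\theta\right|>\epsilon/2\right)\le \P\left(\left|\theta_k-\theta\right|\ge\epsilon/4\right).
\]
This bound follows by splitting on the event $\{|\hat\theta^{(k,T)}-\theta_k|>\epsilon/4\}$, whose probability vanishes as $T\to\infty$.

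Finally we let $k\to\infty$. Since $\theta_k\to\theta$ almost surely, dominated convergence applied to indicators gives $\P(|\theta_k-\theta|\ge\epsilon/4)\to0$. A point to handle carefully is that the iterated limits may exist only as limsups. I will therefore carry limsups throughout and conclude that the liminf of a nonnegative quantity is also $0$, so every limit exists and equals $0$.

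The main obstacle is interpreting Proposition \ref{e2:convergence-in-T} correctly. The limit $\theta_k$ may be random, or possibly deterministic, given by iterating the map $\vartheta\mapsto \quo_\infty+$ (a stationary correction). Passing from the almost sure iterated limit to the probability statement requires that, for each $k$, the $T$-limit exists almost surely on a common full-measure set. I would take the intersection over the countably many $k$ of the full-measure sets provided by the proposition, which makes the argument above rigorous.
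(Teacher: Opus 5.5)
Your proposal follows the paper's proof essentially verbatim: the same three-term split through $\hat{\theta}^{(k,T)}$ on the event $\{T\ge\TT\}$, with Proposition \ref{e1:convergence-in-dt} removing the $\dt$-term, Proposition \ref{e2:convergence-in-T} the $(T,k)$-term, and finiteness of $\TT$ from Proposition \ref{prop:thetaT-is-eventually-positive} the remaining one. Two small remarks: your limit $\theta_k$ is in fact the deterministic $\hat{\theta}^{(k)}$ of Lemma \ref{lemma:integrals-dynamics-vs-equilbrium}. Also, carrying limsups only shows that the outermost iterated quantity vanishes; it does not give existence of the inner limits in $\dt$ and $T$, whose limsups are bounded only by positive quantities, so, exactly as in the paper, the result holds in the $\limsup$ sense rather than with ``every limit existing.''
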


\begin{proof}
Notice that
\begin{equation*}
\begin{aligned}
 \P\left(\left|\hat{\theta}^{(k,T,\dt)}-\theta\right| > \epsilon \right) &= \P\left(\left|\hat{\theta}^{(k,T,\dt)}-\theta\right| > \epsilon ,T<\TT \right) + \P\left(\left|\hat{\theta}^{(k,T,\dt)}-\theta\right| > \epsilon, T\geq \TT \right)\\
 &\leq \P\left(T<\TT \right) + \P\left(\left|\hat{\theta}^{(k,T,\dt)}-\hat{\theta}^{(k,T)}\right| > \epsilon/2, T\geq \TT \right) \\ 
 & \quad +\P\left(\left|\hat{\theta}^{(k,T)}-\theta\right| > \epsilon/2, T\geq \TT \right)\\
 &\leq \P\left(T<\TT \right) + \P\left(\left|\hat{\theta}^{(k,T,\dt)}-\hat{\theta}^{(k,T)}\right| > \epsilon/2, T\geq \TT \right) \\
  &\quad +\P\left(\left|\hat{\theta}^{(k,T)}-\theta\right| > \epsilon/2 \right).
 \end{aligned}
\end{equation*}
Then, thanks to Propositions \ref{e2:convergence-in-T} and  \ref{e1:convergence-in-dt} we have, 
\begin{align*}
\limsup_{k\to\infty}\limsup_{T\to\infty}\limsup_{\dt\to0} \P\left(\left|\hat{\theta}^{(k,T,\dt)}-\theta\right| > \epsilon \right) &\leq    \limsup_{T\to\infty}\P\left(T<\TT \right).
\end{align*}
We conclude applying Proposition \ref{prop:thetaT-is-eventually-positive}.
\end{proof}

\section{Numerical experiments}\label{sec:numerical_experiments}

In this section, we illustrate the performance of the proposed EM estimator through a series of numerical experiments under  different combinations of the parameters $p$, $\theta$, $\sigma$, $\dt$ and $T$. We generate synthetic data by simulating the McKean-Vlasov SDE \eqref{eq:SM1} using an exponential scheme with a discretization step of $\delta = 10^{-4}$. The values of $c$ is fixed at $c=1$ throughout this section, while $X_0 \sim \mathcal{U}(2,5)$, with $ u_0 = \E[X_0]$. We generate 5000 independent Monte Carlo sample paths to estimate all reported statistics.

In the computational implementation, unless explicitly indicated, we fix a tolerance level of $\Tol=10^{-5}$ and stop the algorithm  when the change in the estimator between two successive iterations falls below this threshold. We denote by $k^*$ the index of the last iteration, i.e., 
$$k^* := \min\{k \geq 1 : |\hat{\theta}^{(k,T,\Delta t)} - \hat{\theta}^{(k-1,T,\Delta t)}| < \Tol\},$$
 and refer to $\hat{\theta}^{(k^*,T,\Delta t)}$ as the final estimator. The performance of the estimator is measured through the  relative error $|\hat{\theta}^{(k^*,T,\Delta t)} - \theta|/\theta$, and its empirical mean across replications, which we refer to as the Mean Relative Error (MRE).

\paragraph{The role of $p,\,\theta$ and $T$:}
In Figure \ref{Fig:convergence}, we show the evolution of the MRE and its $[20\%,80\%]$ confidence interval across iterations.  In all cases, $k^*\in[10,30]$, with $k^*$ increasing with the values of $p$ and $\theta$ (see Remark \ref{rem:geometric-convergence-theta-k} in the Appendix for some additional comments). Notice also that the width of the confidence interval, represented by the shaded regions, increases with $p$, and decreases with $\theta$ and  the time horizon $T$. Finally, in all cases, for $T=100$ we obtain the narrowest empirical confidence interval and the smallest $k^*$. As complement for Fig. \ref{Fig:convergence}, in  Table \ref{Tab:Table1} we report the empirical mean and standard deviation of the relative error.

\begin{figure}[htbp]
    \centering
    \begin{tabular}{m{0.2cm} ccc}
        
        % --- ENCABEZADOS DE COLUMNA ---
        & \textbf{$p = 1.5$} & \textbf{$p = 2.0$} & \textbf{$p = 2.5$} \\ [1.5ex]

        % --- FILA 1 ---
        \rotatebox{90}{\textbf{$\theta = 0.1$}} &
        \begin{subfigure}[b]{0.29\textwidth}
            \adjincludegraphics[width=0.9\textwidth,valign=m]{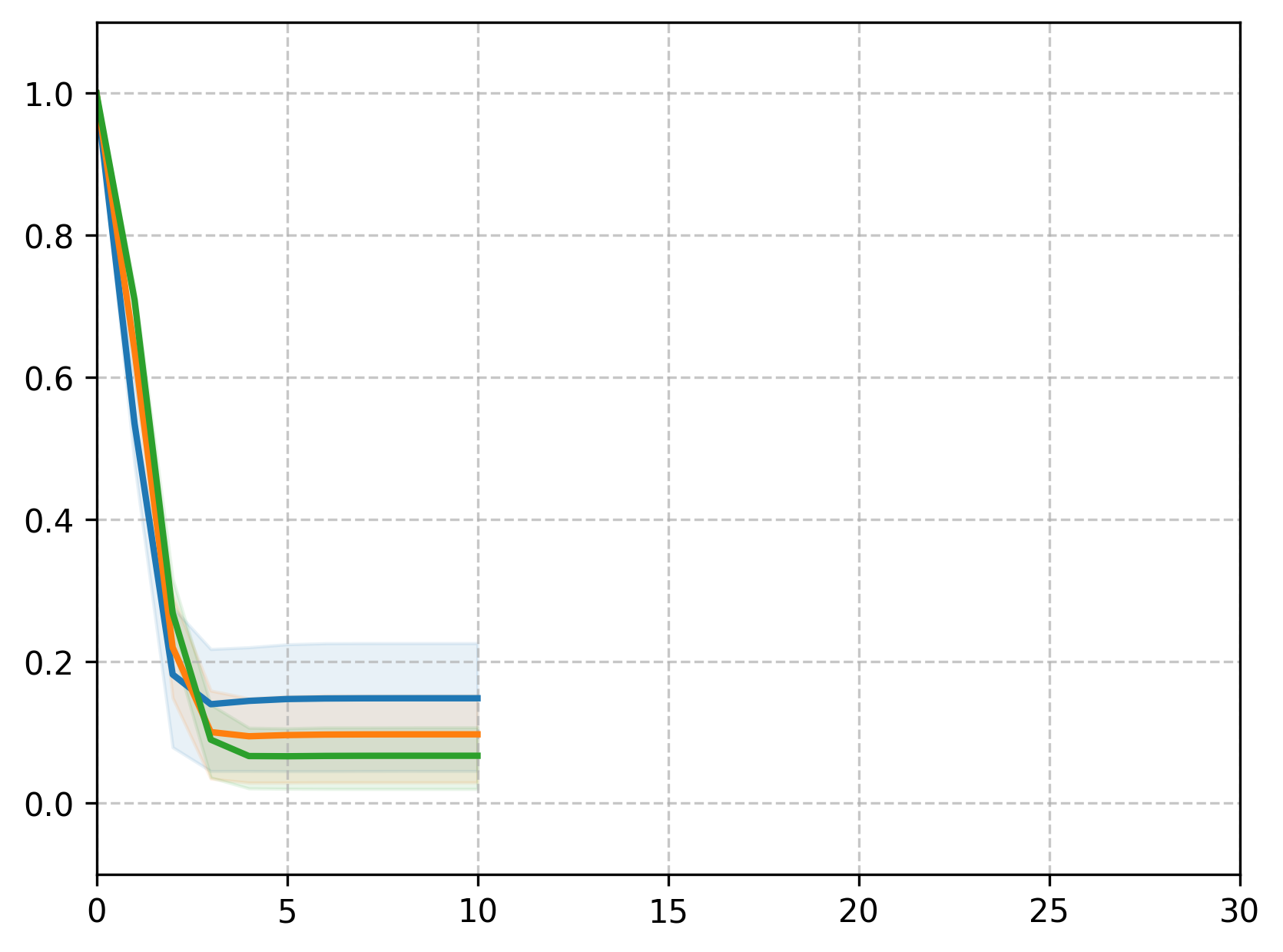}
        \end{subfigure} &
        \begin{subfigure}[b]{0.29\textwidth}
            \adjincludegraphics[width=0.9\textwidth,valign=m]{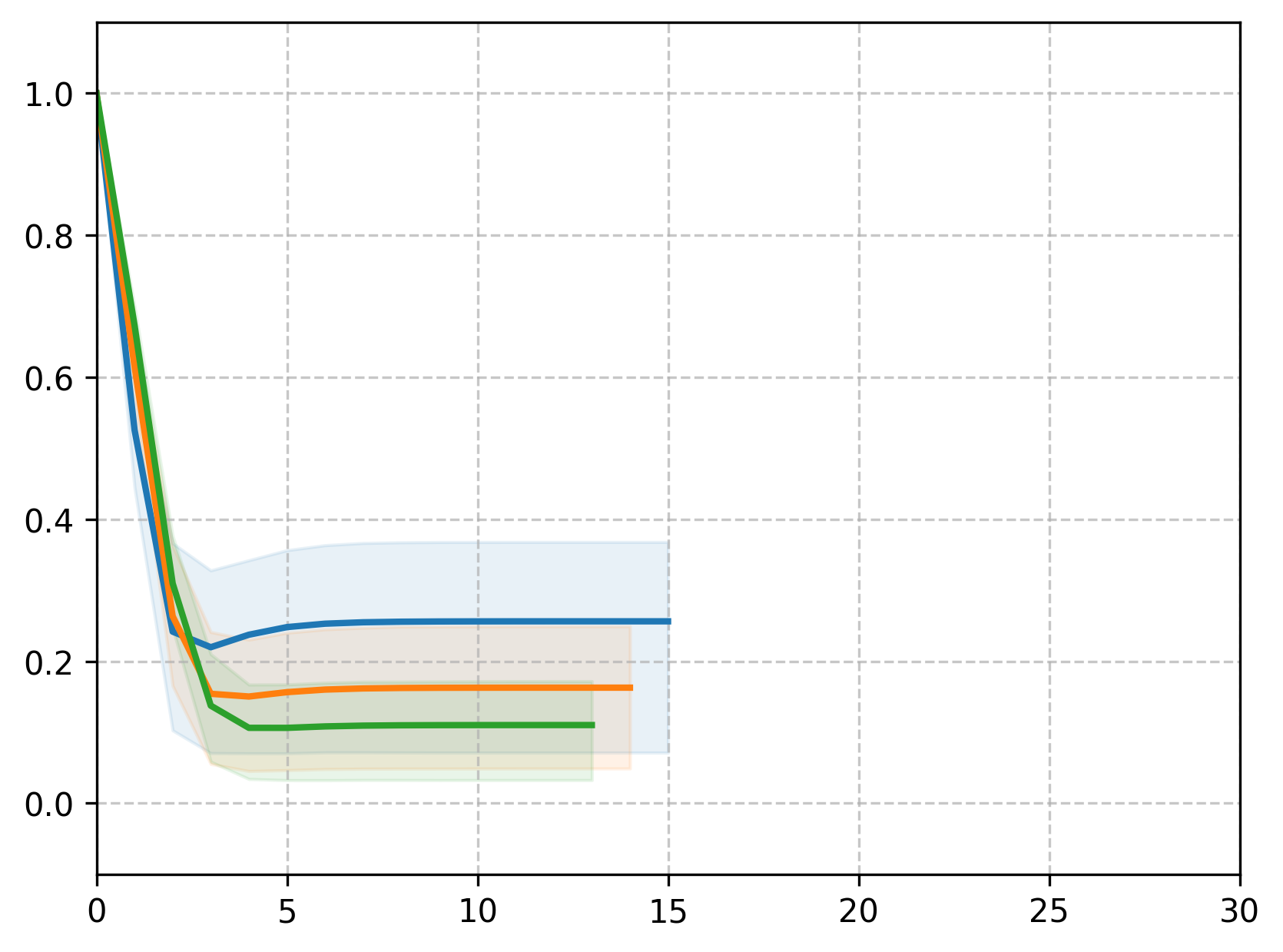}
        \end{subfigure} &
        \begin{subfigure}[b]{0.29\textwidth}
            \adjincludegraphics[width=0.9\textwidth,valign=m]{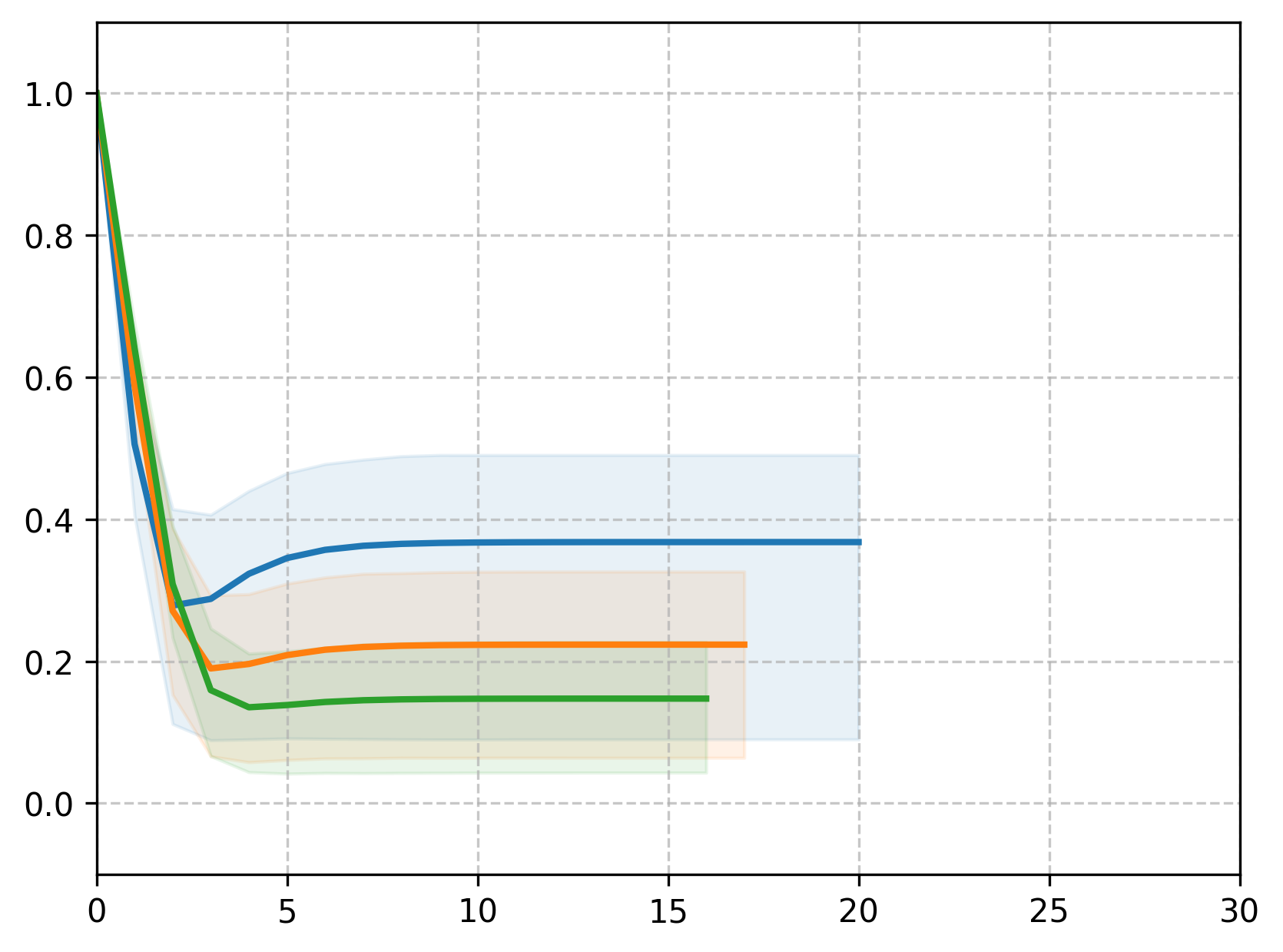}
        \end{subfigure} \\ [3ex]

        % --- FILA 2 ---
        \rotatebox{90}{\textbf{$\theta = 2.0$}} &
        \begin{subfigure}[b]{0.29\textwidth}
            \adjincludegraphics[width=0.9\textwidth,valign=m]{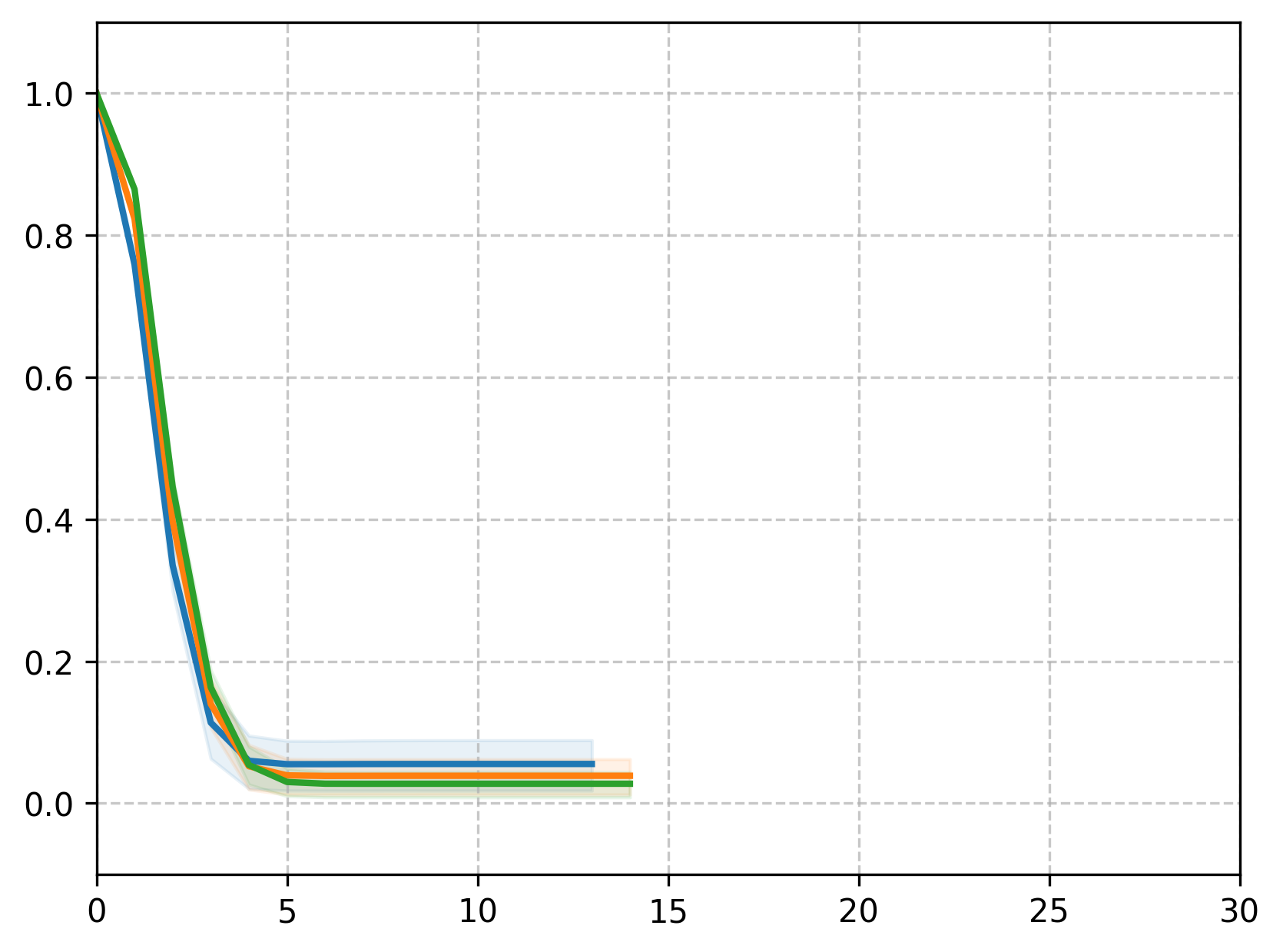}
        \end{subfigure} &
        \begin{subfigure}[b]{0.29\textwidth}
            \adjincludegraphics[width=0.9\textwidth,valign=m]{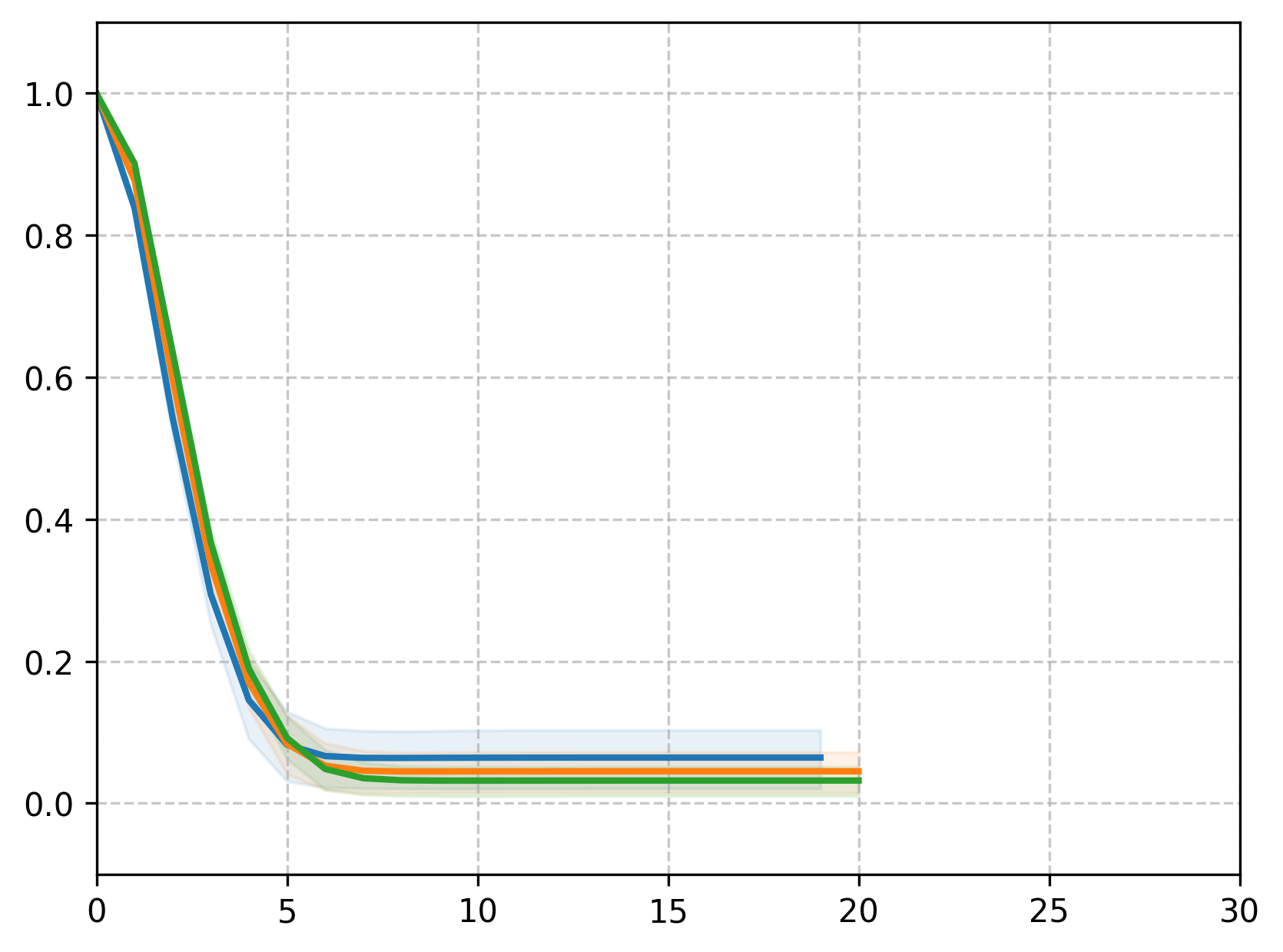}
        \end{subfigure} &
        \begin{subfigure}[b]{0.29\textwidth}
            \adjincludegraphics[width=0.9\textwidth,valign=m]{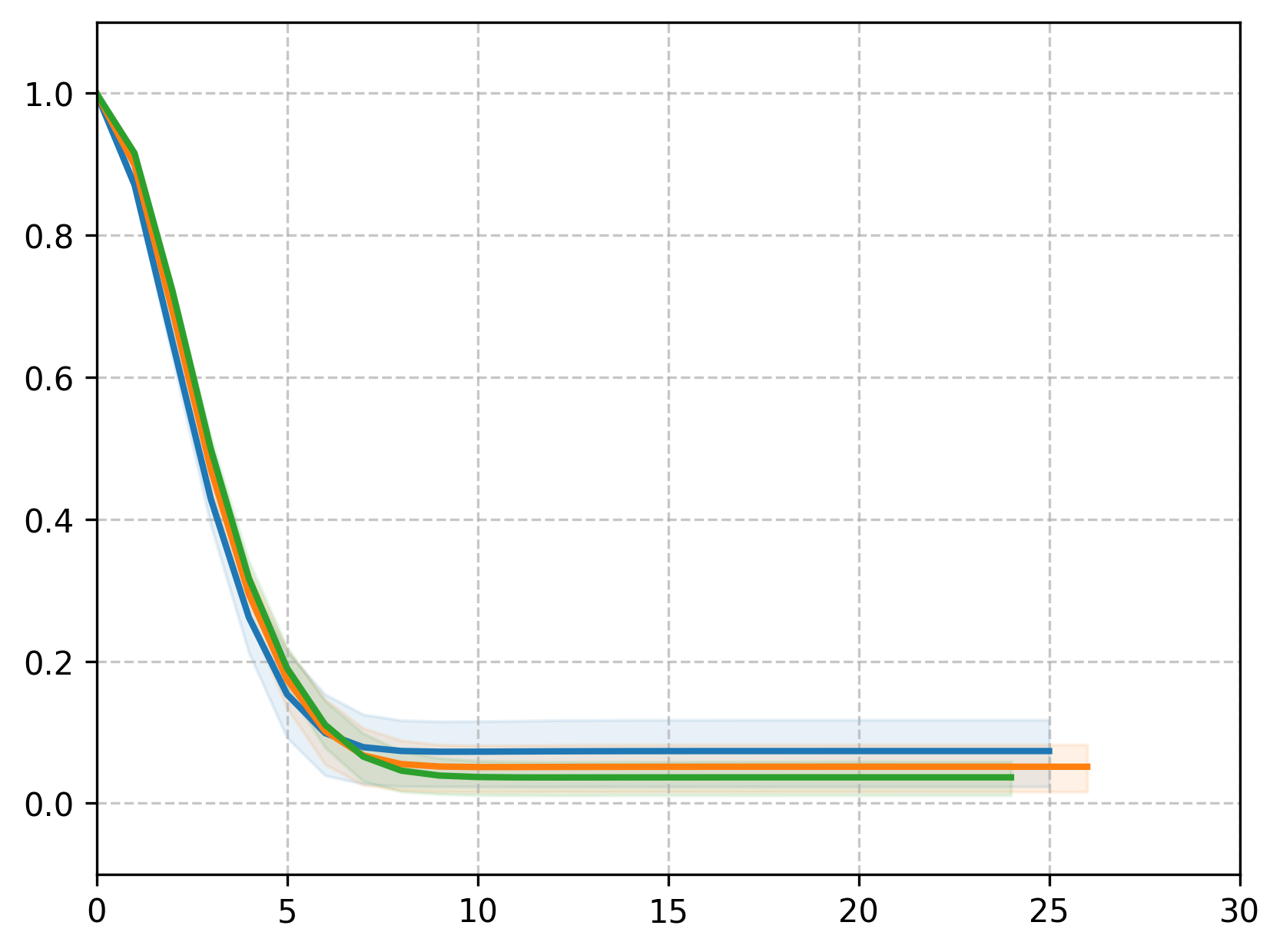}
        \end{subfigure} \\ [3ex]

        % --- FILA 3 ---
        \rotatebox{90}{\textbf{$\theta = 4.0$}} &
        \begin{subfigure}[b]{0.29\textwidth}
            \adjincludegraphics[width=0.9\textwidth,valign=m]{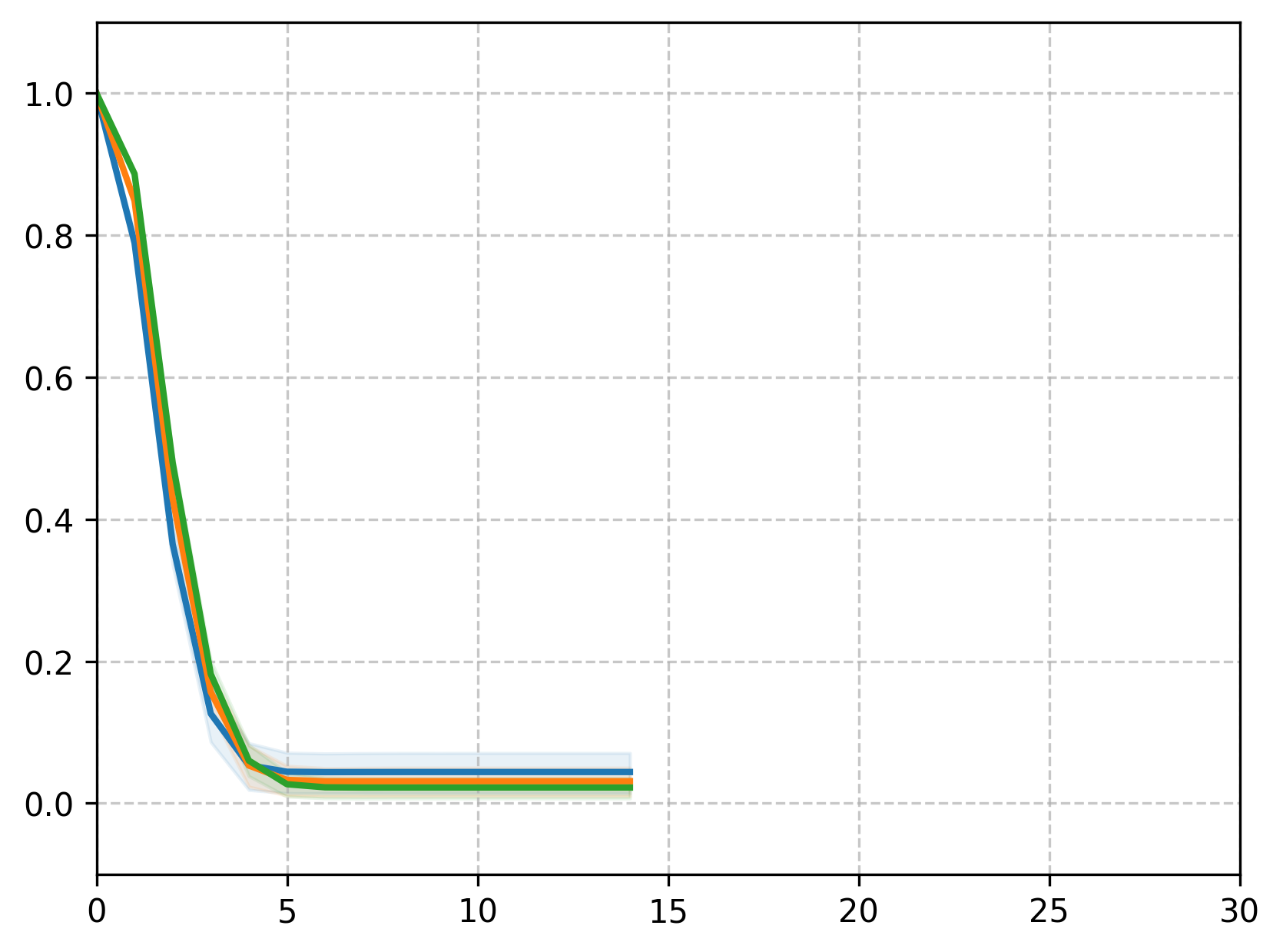}
            
        \end{subfigure} &
        \begin{subfigure}[b]{0.29\textwidth}
            \adjincludegraphics[width=0.9\textwidth,valign=m]{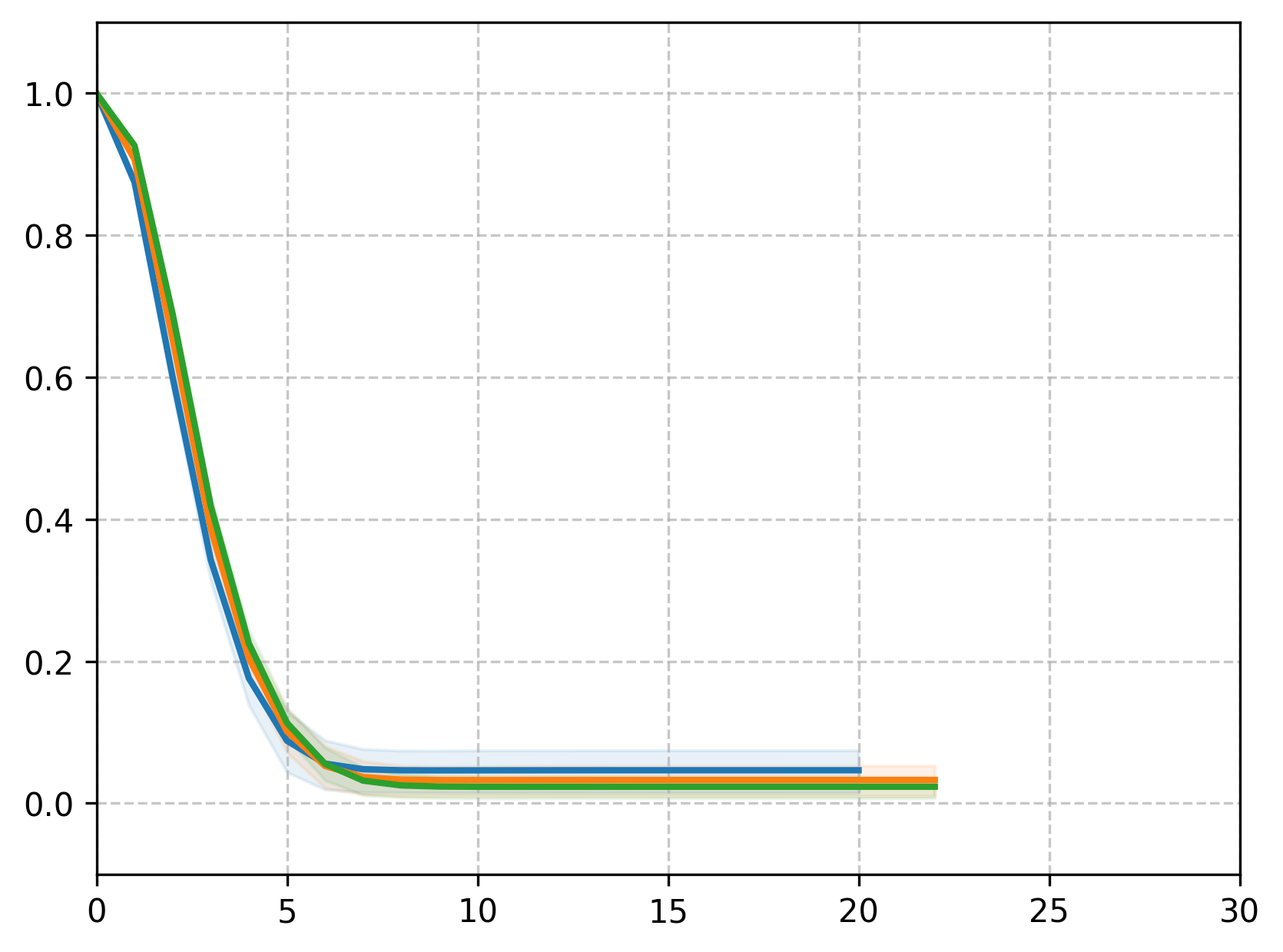}
            
        \end{subfigure} &
        \begin{subfigure}[b]{0.29\textwidth}
            \adjincludegraphics[width=0.9\textwidth,valign=m]{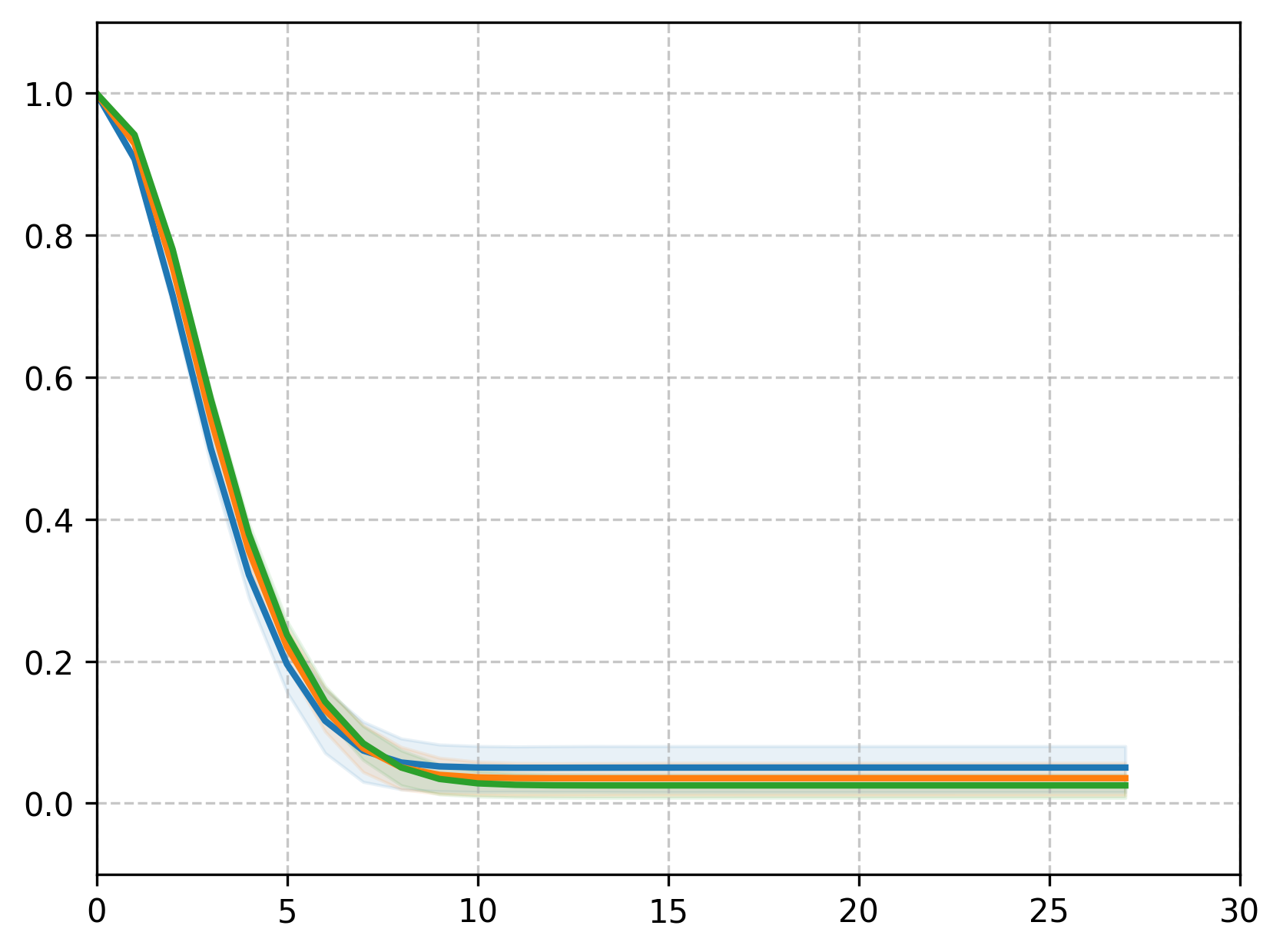}

        \end{subfigure}
    \end{tabular}
    \captionsetup{width=0.9\textwidth}
    \caption{Evolution of the distribution of the relative error across iterations $k=1,\ldots,k^*$. Solid lines denote the empirical mean $T=25$ (blue), $T=50$ (orange), and $T=100$ (green). Shaded regions represent the empirical $(20\%,80\%)$-confidence interval. Fixed parameters: $\sigma^2=0.1$, $\Delta t=10^{-4}$.}
    \label{Fig:convergence}
\end{figure}

\begin{table}[h!]
    \centering
    \small
\begin{tabular}{l cc cc cc}
        \toprule
        & \multicolumn{6}{c}{\textbf{Parameter $p$}} \\
        \cmidrule(lr){2-7}
        & \multicolumn{2}{c}{$p = 1.5$} & \multicolumn{2}{c}{$p = 2.0$} & \multicolumn{2}{c}{$p = 2.5$} \\
        \cmidrule(lr){2-3} \cmidrule(lr){4-5} \cmidrule(lr){6-7}
        \textbf{Parameter $\theta$} & Mean & SD & Mean & SD & Mean & SD \\
        \midrule
        $\theta = 0.1$  & 0.0669 & 0.0536 & 0.1102 & 0.0933 & 0.1474 & 0.1330 \\
        $\theta = 2.0$  & 0.0275 & 0.0212 & 0.0320 & 0.0246 & 0.0365 & 0.0282 \\
        $\theta = 4.0$  & 0.0222 & 0.0170 & 0.0233 & 0.0179 & 0.0251 & 0.0193 \\
        $\theta = 8.0$  & 0.0178 & 0.0136 & 0.0168 & 0.0129 & 0.0171 & 0.0131 \\
        $\theta = 10.0$ & 0.0165 & 0.0127 & 0.0151 & 0.0116 & 0.0151 & 0.0115 \\
        $\theta = 20.0$ & 0.0132 & 0.0101 & 0.0109 & 0.0083 & 0.0101 & 0.0077 \\
        \bottomrule
    \end{tabular}
    \captionsetup{width=0.9\textwidth}
    \caption{Empirical mean and standard deviation of the relative error for varying $\theta$ and $p$. Fixed parameters: $\sigma^2=0.1$, $\Delta t=10^{-4}$, and $T=100$.}
    \label{Tab:Table1}
\end{table}

\clearpage

\paragraph*{The role of diffusion parameter $\sigma^2$ and the sampling frequency $\Delta t$:}
Table \ref{Tab:Table2} reports the empirical mean and standard deviation of the relative error for varying $p$ and $\sigma^2$. Observe that both empirical statistics decrease monotonically with $\sigma^2$, no matter the value of $p$.
  
Figure \ref{Fig:MSE} shows the empirical MRE as a function of $\Delta t$. As expected, the MRE decreases as $\dt$ decreases, although a plateau appears due to bias of the estimator. This bias seems to depend on $\sigma^2$. More precisely, when $\sigma^2$ increases the bias seems to decrease. Heuristically, from Proposition \ref{e2:convergence-in-T}, consistency of the Moment-EM estimator, and Lemma \ref{lemma:integrals-dynamics-vs-equilbrium}, we have for sufficiently small $\dt$ and large enough $T$:
$$
\sqrt{\frac{T}{\varInfty} }\left( \hat{\theta}^{(k,T,\dt)}-\hat{\theta}^{(k)} \right)\approx \mathcal{N}(0,1),
$$
where
$$
\hat{\theta}^{(k)} = \frac{\sigma^2+c^{1/p}(\hat\theta^{(k-1)})^{1-1/p}}{\sigma^2+c^{1/p}\theta^{1-1/p}}\theta ,\;\;
\varInfty =\frac{2\theta^2\sigma^2}{(2c^{1/p}\theta^{1-1/p}+\sigma^2)(c^{1/p}\theta^{1-1/p}+\sigma^2)}.
$$
Equivalently, for sufficiently small $\dt$ and large enough $T$:
$$
\hat{\theta}^{(k,T,\dt)}\approx \mathcal{N}\left(\hat{\theta}^{(k)},\quad \frac{2\theta^2\sigma^2}{T\, (2c^{1/p}\theta^{1-1/p}+\sigma^2)(c^{1/p}\theta^{1-1/p}+\sigma^2)}\right).
$$
Notice that in the limit $\sigma^2\to\infty$, $\hat{\theta}^{(k)} = \theta$ and there is no bias. Meanwhile, in the limit $\sigma^2\to0$, $\hat{\theta}^{(k)}=0$, and the finite size bias is equal to the parameter.  Additionally, we have observed in the numerical experiments that the empirical variance of the estimator is consistent with the asymptotic variance $\varInfty$. 

\begin{table}[h]
    \centering
    \small
    \begin{tabular}{l cc cc cc}
        \toprule
        & \multicolumn{6}{c}{\textbf{Parameter $p$}} \\
        \cmidrule(lr){2-7}
        & \multicolumn{2}{c}{$p = 1.5$} & \multicolumn{2}{c}{$p = 2.0$} & \multicolumn{2}{c}{$p = 2.5$} \\
        \cmidrule(lr){2-3} \cmidrule(lr){4-5} \cmidrule(lr){6-7}
        \textbf{Parameter $\sigma^2$} & Mean & SD & Mean & SD & Mean & SD \\
        \midrule
        $\sigma^2 = 2.0$   & 0.0486 & 0.0377 & 0.0504 & 0.0391 & 0.0521 & 0.0404 \\
        $\sigma^2 = 4.0$   & 0.0426 & 0.0326 & 0.0435 & 0.0332 & 0.0442 & 0.0338 \\
        $\sigma^2 = 6.0$   & 0.0380 & 0.0288 & 0.0384 & 0.0292 & 0.0388 & 0.0295 \\
        $\sigma^2 = 8.0$   & 0.0345 & 0.0260 & 0.0348 & 0.0263 & 0.0350 & 0.0265 \\
        $\sigma^2 = 10.0$  & 0.0317 & 0.0239 & 0.0319 & 0.0241 & 0.0321 & 0.0242 \\
        $\sigma^2 = 25.0$  & 0.0216 & 0.0162 & 0.0216 & 0.0163 & 0.0217 & 0.0163 \\
        $\sigma^2 = 50.0$  & 0.0157 & 0.0118 & 0.0158 & 0.0118 & 0.0158 & 0.0118 \\
        $\sigma^2 = 100.0$ & 0.0114 & 0.0086 & 0.0114 & 0.0086 & 0.0114 & 0.0086 \\
        \bottomrule
    \end{tabular}
    \captionsetup{width=0.9\textwidth}
    \caption{Empirical mean and standard deviation of $|\hat{\theta}^{(k^*, T,\Delta t)}-\theta|/\theta$ for varying $p$ and $\sigma^2$. Fixed parameters:  $\theta=2$,  $\Delta t =10^{-4}$, and $T=100$.}
    \label{Tab:Table2}
\end{table}

\begin{figure}[h]
    \centering
    \includegraphics[scale=0.41]{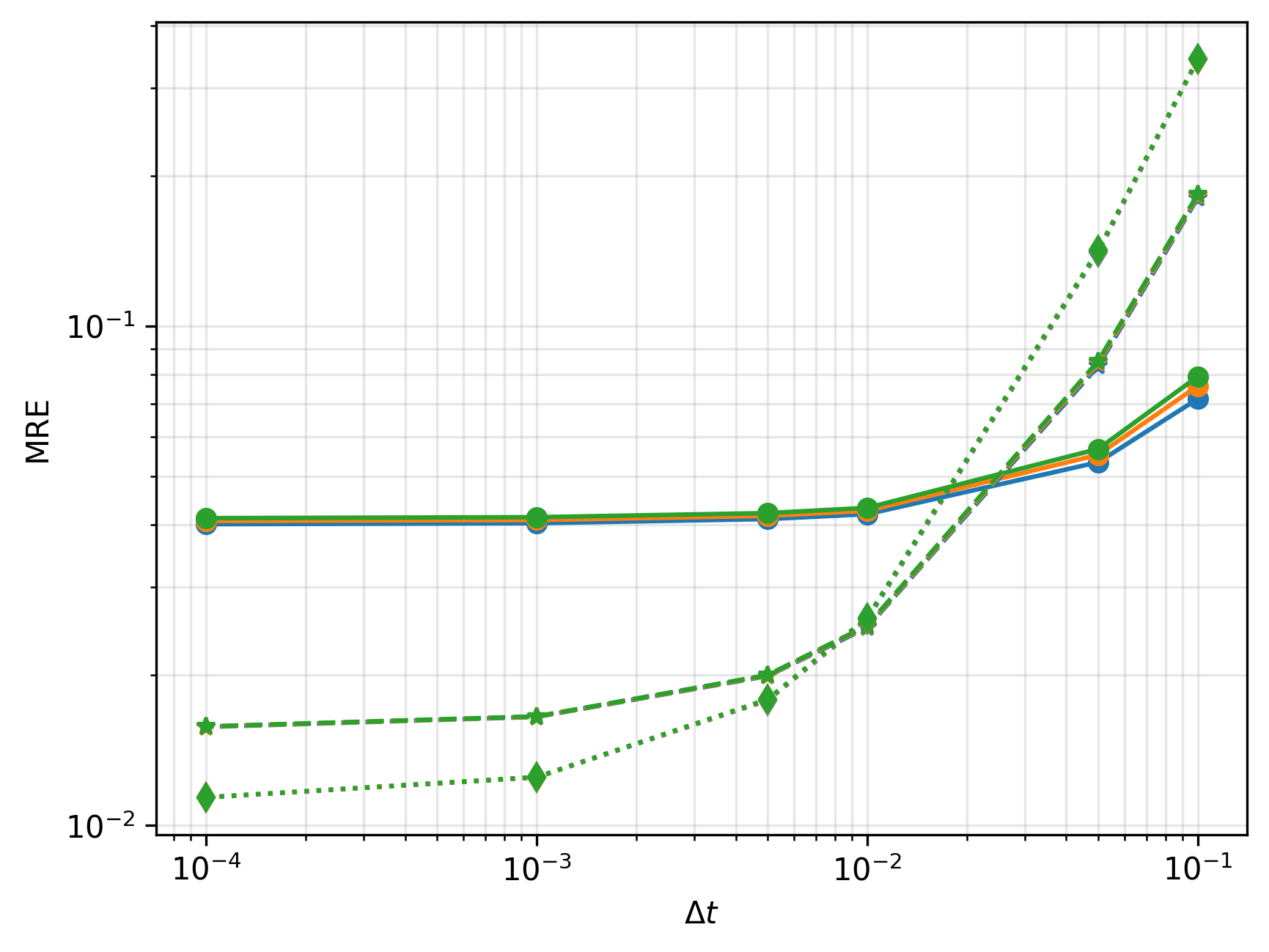}
    \captionsetup{width=0.9\textwidth}
    \caption{Empirical MRE for $\hat{\theta}^{(k^*,T,\dt)}$ vs. $\dt$ (log-log scale) for different values of $\sigma^2$ and $p$: $\sigma^2 = 5$ (solid), $\sigma^2 = 50$ (dashed), $\sigma^2 = 100$ (doted), $p =1.5$ (blue), $p=2.0$ (orange), and $p=2.5$ (green). Fixed parameters:  $\theta =2$, $T=100$, $\Delta t = 10^{-4} $ and $\Tol = 10^{-6}$.}
    \label{Fig:MSE}
\end{figure}

\clearpage
\paragraph*{The role of the regularity of the sampling grid:}

To assess the robustness of the method with respect to the regularity of the sampling grid, we implemented two sampling procedures to simulate changes in observation frequency. First, over a horizon $T = 100$, we perform a two-regime sampling scheme: the process is observed at $\Delta t_a = 0.1$ up to a switching time $t_{\text{switch}}$, and thereafter, we sample with $\Delta t_b = 0.001$; we also consider the reverse order (high-to-low frequency). For the switching time, we study two cases: $t_{\text{switch}} = 25$ and $t_{\text{switch}} =75$, while the initial condition is given by $X_0 \sim \mathcal{U}(0.9\Eq,1.1\Eq)$. Figure~\ref{fig:Samlping-Freq_a} shows the empirical distribution of the estimator for all four combinations, together with the two homogeneous references. We observe that curves with the same amount of time spent in each regime coincide almost exactly, regardless of which frequency comes first, suggesting that the order of the regimes has no visible effect on the estimator's distribution. What matters is the fraction of the horizon spent at each sampling rate, with the estimator's distribution closely tracking the homogeneous reference of whichever regime occupies the larger share of $[0,T]$. Still, the four distributions remain reasonably close to one another, suggesting that longer exposure to coarser sampling does not strongly degrade the estimator.

Second, we consider sampling at random times following a homogeneous Poisson Process with intensity $\lambda = 100$. In Figure \ref{fig:random-Samlping}, we observe that there is no qualitative difference in the empirical distribution of the algorithm with random sampling and the algorithm with constant frequency equal to $1/100$. This indicates that the algorithm is robust with respect to moderate perturbations of the sampling times.  

\begin{figure}[h]
    \centering
    \begin{subfigure}[b]{0.48\textwidth}
        \adjincludegraphics[width=\textwidth]{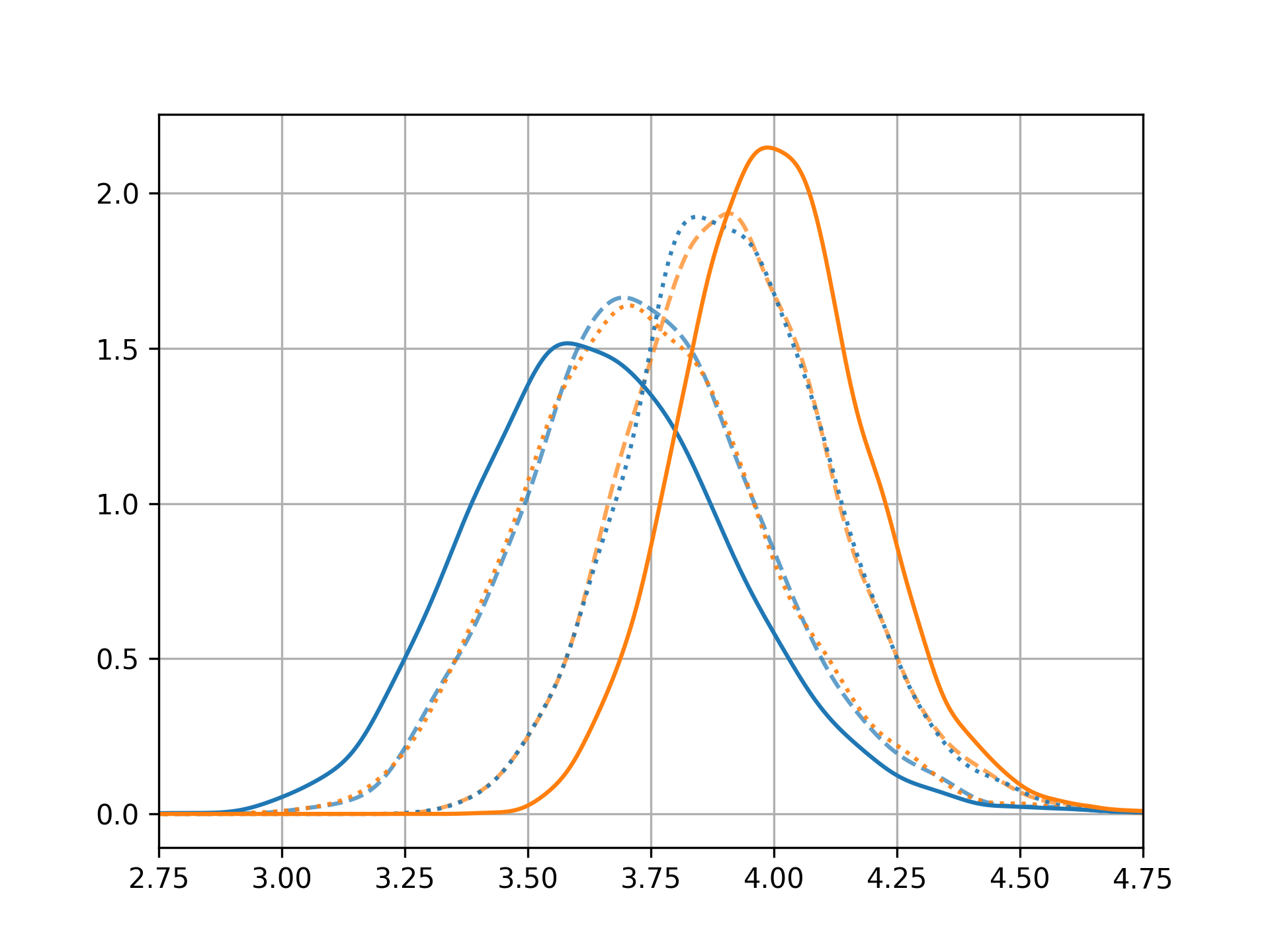}
        \captionsetup{width=0.9\textwidth}
        \caption{Homogeneous vs. switching sampling.}
        \label{fig:Samlping-Freq_a}
    \end{subfigure}
    \hspace{0.01\textwidth}
    \begin{subfigure}[b]{0.48\textwidth}
       
        \adjincludegraphics[width=.93\textwidth]{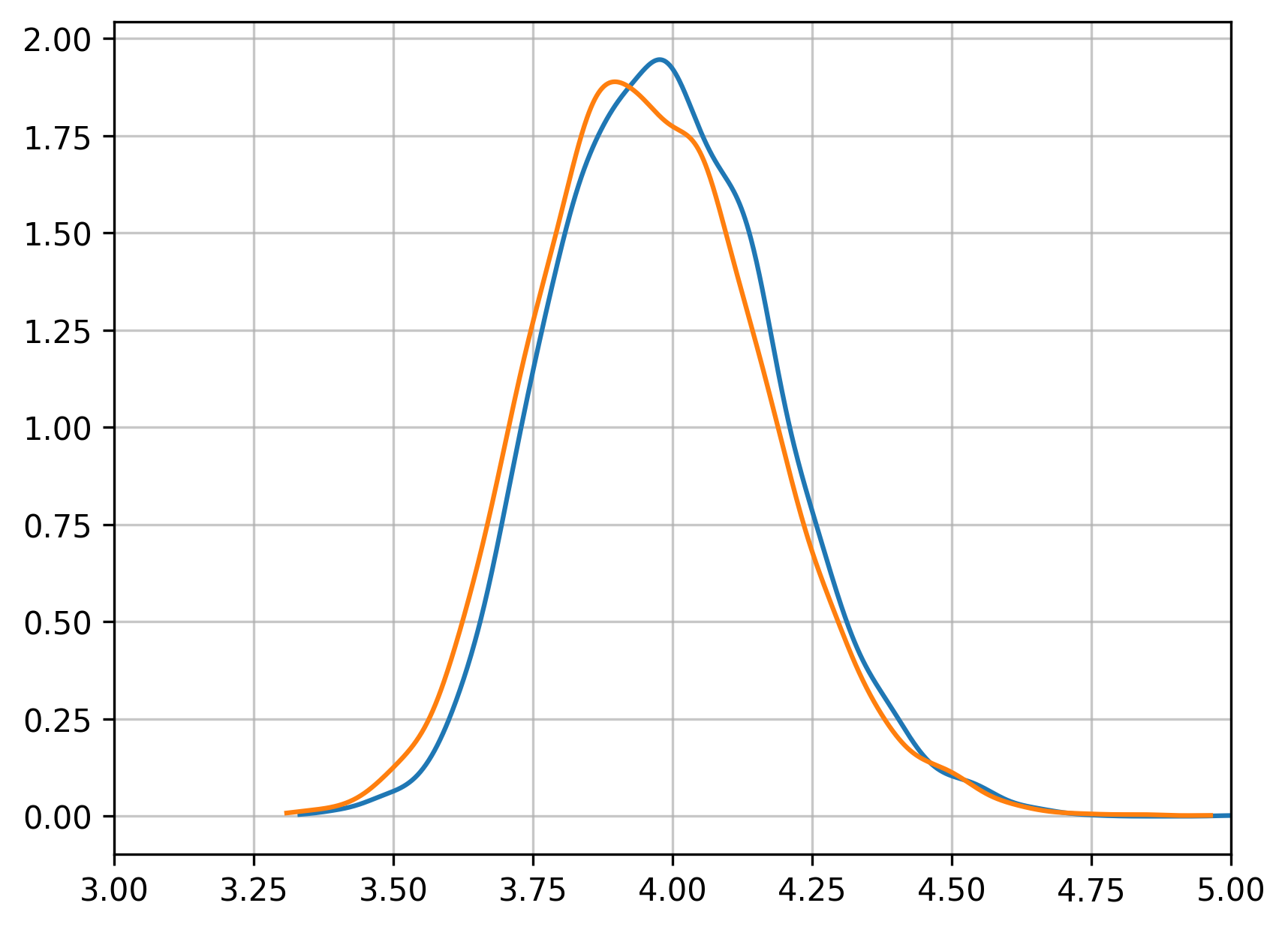}
        \captionsetup{width=0.9\textwidth}
        \caption{Homogeneous vs. Poissonian Sampling.}
         \label{fig:random-Samlping}
    \end{subfigure}
    \captionsetup{width=0.9\textwidth}

    \caption{Empirical PDF of $\hat{\theta}^{(k^*,T,\Delta t)}$ under non-regular sampling.
    {\bf (Left):} Two-regime switching sampling, compared to homogeneous references. Solid blue: homogeneous sampling with $\Delta t=0.1$; solid orange: homogeneous sampling with $\Delta t=0.001$. Blue (dotted/dashed): sampling starts at $\Delta t_a=0.1$ and switches to $\Delta t_b=0.001$. Orange (dotted/dashed): sampling starts at $\Delta t_b=0.001$ and switches to $\Delta t_a=0.1$. Dotted: switching time $t_{\text{switch}}=25$; dashed: switching time $t_{\text{switch}}=75$. Initial condition $X_0 \sim \mathcal{U}(0.9\Eq,1.1\Eq)$.
    {\bf (Right):} Random Poisson Sampling with intensity $100$ (orange), as reference homogeneous sampling with constant step $\Delta t=0.01$ (blue).
    Fixed parameters: $\theta=4$, $\sigma^2=5$, $T=100$.}
    \label{Fig:Irregular}
\end{figure}

\paragraph{The role of the tolerance of the algorithm:}
In Table \ref{Tab:Iter}, we provide descriptive statistics for $k^*$ across different tolerance levels. The results indicate, as expected, that the mean number of iterations necessary for convergence increases as the tolerance decreases. Nevertheless, we observe that the MRE stabilizes from tolerance $10^{-3}$, suggesting that there is no gain in considering smaller tolerances.

.

\begin{table}[h]
    \centering
    \small 
    \begin{tabular}{l cc cc cc}
        \toprule
        \multirow{2}{*}{$\text{Tol}$} & \multicolumn{2}{c}{\textbf{Mean $k^*$}} & \multicolumn{2}{c}{\textbf{Max}} & \multicolumn{2}{c}{\textbf{MRE}} \\
        \cmidrule(lr){2-3} \cmidrule(lr){4-5} \cmidrule(lr){6-7}
        & $\theta=0.1$ & $\theta=2.0$ & $\theta=0.1$ & $\theta=2.0$ & $\theta=0.1$ & $\theta=2.0$ \\
        \midrule
        $10^{-1}$ & 2.0000 & 6.0000  & 2 & 6 & 0.708866 & 0.030021 \\
        $10^{-2}$ & 5.0000 & 8.0000 &  5 & 8 & 0.066505 & 0.027468 \\
        $10^{-3}$ & 7.0000 & 10.0000 &  7 & 10 & 0.066698 & 0.027511 \\
        $10^{-4}$ & 8.9580 & 11.9998 &  9 & 12 & 0.066932 & 0.027518 \\
        $10^{-5}$ & 10.3516 & 13.9994  & 11 & 15 & 0.066945 & 0.027508 \\
        \bottomrule
    \end{tabular}
    \captionsetup{width=0.9\textwidth}
    \caption{$k^*$ and MRE of $\hat{\theta}^{(k^*,T,\Delta t)}$ for different tolerance levels and values of $\theta$. Max: maximum. Fixed parameters: $T=100$, $\Delta t=10^{-4}$, $p=1.5$, and $\sigma^2=0.1$.}
    \label{Tab:Iter}
\end{table}

\section{Closing remarks}\label{sec:closing}
In this article we have introduced an Expectation-Maximization type algorithm to estimate the forcing parameter of the MV model \eqref{eq:SM1}. The algorithm is easy to implement and its computational cost grows linearly on the size of the data. Moreover, we have proved analytically its consistency and established the assymptotic normality 
for the continuously-sampled estimator $\hat{\theta}^{(k,T)}$, with explicit asymptotic variance. The latter is consistent with  the empirical variance observed in the numerical experiments.
The numerical experiments also show that the time horizon $T$ is the most influential parameter on the performance of the estimator, implying that in practical terms, longer observational records are more valuable than increasing the sampling frequency---a finding analytically supported by Proposition \ref{e2:convergence-in-T} and consistent with the precedent set by Fournié and Talay \cite{FournieTalay1991}, who studied maximum likelihood and method-of-moments estimators for the Cox--Ingersoll--Ross (CIR) model and found that, while the volatility parameter can be accurately estimated via quadratic variation, precise estimation of the drift parameters requires long observation horizons. We also find evidence of robustness with respect to the sampling frequency and the power $p$.

Finally, the results of this work open several research directions. On the theoretical side, given the empirical 
evidence presented, a rigorous Central Limit Theorem for the discretely-sampled estimator $\hat{\theta}^{(k,T,\Delta t)}$ is a natural next step. On the applied side, extensions to models in higher dimensions and to more general diffusion coefficients are part of ongoing research.

%%%%%%%%%%%%%%%%%%%%%%%%%%%%%%%%%%%%%%%%%%
%%%%%%%%%%%%%%%%%%%%%%%%%%%%%%%%%%%%%%%%%%
\appendix
\section{Appendix}\label{sec:appendix}

\subsection{On the ODE }\label{app:preliminaries_proofs}
Formally, assuming there is a weak solution to Equation \eqref{eq:SM1} and taking expectations, we couple the model with the following initial value problem for the deterministic function $\uu_t:=\E[X_t]$:

\begin{equation}\label{eq:ODE}
\left\{ \begin{array}{lcc}
\dfrac{d\uu_t}{dt} = \theta-c\uu_t^{p} \\  
\\ 
\uu_0=\E[X_0].
\end{array} \right.
\end{equation}

When the dependence of $\uu$ on $\theta$ needs to be emphasized, we denote it by $\uu^{(\theta)}$.

\medskip
The properties of the solution to \eqref{eq:ODE} are crucial for constructing the EM-estimator and proving its consistency. We summarize them in the following lemma:

\begin{lemma}
\label{prop:ode_properties} Consider a fixed $\theta>0$ and assume $u_0>0$. Then, it holds:
\begin{enumerate}[label=(\roman*)]
\item\label{item:wellposedness_ODE} For any time horizon $T>0$, there exists a unique positive solution $\uu^{(\theta)}$ to the initial value problem \eqref{eq:ODE}. This solution is monotone and bounded for all $t \in [0,T]$. 

\item\label{item:convergence_to_equi_ODE} As $t\to\infty$, the solution $u^{(\theta)}_t$ converges exponentially fast to its equilibrium $\Eq^{(\theta)} := (\frac{\theta}{c})^{1/p}$:
\begin{equation*}
|u^{(\theta)}_t - \Eq^{(\theta)}|\le |u_0 -\Eq^{(\theta)}|\, e^{- c p (\Eq^{(\theta)}\wedge u_0)^{p-1}t}.
\end{equation*}

\item\label{lem:distance-between-ode-solutions} For any other parameter  $\alpha>0$ we have 
\begin{equation}\label{eq:distance-sols-ode}
|\uu^{(\alpha)}_t- \uu^{(\theta)}_t|\leq | \alpha-\theta|\frac{1-e^{-c (u_0\wedge \Eq^{(\alpha)})^{p-1}\, t}}{c (u_0\wedge \Eq^{(\alpha)})^{p-1}}.
\end{equation}
Furthermore,  if $\alpha < \theta$, then $\uu^{(\alpha)}_t< \uu^{(\theta)}_t$, for all $t\in[0,T]$.
\end{enumerate}

\end{lemma}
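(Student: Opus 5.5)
The plan is to treat the scalar autonomous ODE $\dot u = f(u)$ with $f(u)=\theta-cu^p$ via one-dimensional phase-line and comparison arguments, proving (i), (ii), (iii) in that order.

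\emph{Step (i).} $f$ is locally Lipschitz on $(0,\infty)$, so Picard--Lindelöf gives a unique local solution. The unique zero of $f$ on $(0,\infty)$ is $\Eq^{(\theta)}$, with $f>0$ below it and $f<0$ above it. Uniqueness forbids crossing the constant solution $\Eq^{(\theta)}$. Hence if $u_0<\Eq^{(\theta)}$ the solution is increasing and stays in $[u_0,\Eq^{(\theta)})$. If $u_0>\Eq^{(\theta)}$ it is decreasing in $(\Eq^{(\theta)},u_0]$, and if $u_0=\Eq^{(\theta)}$ it is constant. In every case the solution stays in the compact set $[u_0\wedge\Eq^{(\theta)},u_0\vee\Eq^{(\theta)}]\subset(0,\infty)$. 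So it extends globally, and positivity, monotonicity and boundedness follow.

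\emph{Step (ii).} Set $e_t=u_t-\Eq^{(\theta)}$. Since $c(\Eq^{(\theta)})^p=\theta$, the mean value theorem gives
\[
\dot e_t = -c\big(u_t^p-(\Eq^{(\theta)})^p\big) = -cp\,\xi_t^{p-1} e_t,
\]
with $\xi_t$ between $u_t$ and $\Eq^{(\theta)}$. By step (i), $\xi_t\ge u_0\wedge\Eq^{(\theta)}$. Since $p>1$, this gives $\xi_t^{p-1}\ge (u_0\wedge \Eq^{(\theta)})^{p-1}$. Then $\frac{d}{dt}|e_t|\le -cp(u_0\wedge\Eq^{(\theta)})^{p-1}|e_t|$, which is legitimate since $e_t$ does not change sign. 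Grönwall yields the bound.

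\emph{Step (iii).} Let $v_t=u^{(\alpha)}_t-u^{(\theta)}_t$, so $v_0=0$ and
\[
\dot v_t=(\alpha-\theta)-cp\,\eta_t^{p-1}v_t,
\]
with $\eta_t$ between $u^{(\alpha)}_t$ and $u^{(\theta)}_t$. The variation-of-constants formula gives
\[
v_t=(\alpha-\theta)\int_0^t e^{-\int_s^t cp\,\eta_r^{p-1}dr}\,ds .
\]
This immediately gives the strict ordering: if $\alpha<\theta$, then $v_t<0$ for $t>0$. This is how the statement should be read; at $t=0$ there is equality.

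For the quantitative bound, use $|v_t|\le|\alpha-\theta|\int_0^t e^{-\lambda(t-s)}ds=|\alpha-\theta|\frac{1-e^{-\lambda t}}{\lambda}$, provided $cp\,\eta_r^{p-1}\ge\lambda:=c(u_0\wedge\Eq^{(\alpha)})^{p-1}$. The factor $p\ge1$ only helps, so the remaining task is to bound $\eta_r$ below by $u_0\wedge\Eq^{(\alpha)}$.

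\textbf{Main obstacle.} This lower bound on $\eta_r$ is the delicate point. $u^{(\alpha)}_r\ge u_0\wedge\Eq^{(\alpha)}$ holds by step (i). However, $u^{(\theta)}_r$ is only bounded below by $u_0\wedge\Eq^{(\theta)}$, which may be smaller when $\theta<\alpha$. I would handle this by the ordering just proved.
\begin{itemize}
\item If $\theta\ge\alpha$, then $u^{(\theta)}\ge u^{(\alpha)}$, so $\eta_r\ge u^{(\alpha)}_r\ge u_0\wedge\Eq^{(\alpha)}$.
\item If $\theta<\alpha$, I would swap the roles of $\alpha$ and $\theta$ in the linearization, or directly bound $u^p-w^p\ge p\min(u,w)^{p-1}(u-w)$.
\end{itemize}
If the second case genuinely requires $u_0\wedge\Eq^{(\theta)}$, I would fall back on writing $v$'s equation using $(u^{(\alpha)})^p-(u^{(\theta)})^p$ with the convexity bound $\ge p\,(u^{(\theta)})^{p-1}(u^{(\alpha)}-u^{(\theta)})$. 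I would then check carefully which constant the stated inequality actually needs. The claimed form with $\Eq^{(\alpha)}$ is the one I would try to justify first in the case $\alpha\le\theta$, which is the case used in Lemma~\ref{prop:basics-of-true-estimator}.
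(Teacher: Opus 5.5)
Parts (i) and (ii) of your proposal, and the strict ordering in (iii), are fine and follow essentially the paper's route. That route is confinement of the solution to $[u_0\wedge\Eq^{(\theta)},u_0\vee\Eq^{(\theta)}]$, then linearization around the equilibrium with a variation-of-constants/Gr\"onwall bound. Your remark that the ordering is strict only for $t>0$ is also correct.

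The gap is in the quantitative bound of (iii) when $\alpha>\theta$. The statement covers this case ("for any other parameter $\alpha>0$"), and the paper needs it later to compare $u^{(k,T)}$ with $u^{(k)}$, whose parameters can be ordered either way. In this case $u^{(\theta)}<u^{(\alpha)}$. Both fallbacks you list only yield a rate of order $c(u_0\wedge\Eq^{(\theta)})^{p-1}$: swapping the roles of $\alpha$ and $\theta$, or the convexity bound $u^p-w^p\ge p\min(u,w)^{p-1}(u-w)$. That rate is strictly smaller than the claimed $c(u_0\wedge\Eq^{(\alpha)})^{p-1}$ whenever $u_0>\Eq^{(\theta)}$. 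As written, you prove a weaker inequality in that case and leave open which constant is attainable.

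The missing observation is that the mean-value form discards exactly the information you need. You do not know where $\eta_r$ sits, but $p\,\eta_r^{p-1}$ is the secant slope
\[
h_r:=\frac{(u^{(\alpha)}_r)^p-(u^{(\theta)}_r)^p}{u^{(\alpha)}_r-u^{(\theta)}_r}=p\int_0^1\big(z\,u^{(\alpha)}_r+(1-z)\,u^{(\theta)}_r\big)^{p-1}dz .
\]
The slope is controlled by the \emph{larger} endpoint. For $0<m<M$ and $p\ge1$,
\[
M^p-m^p-M^{p-1}(M-m)=m\,\big(M^{p-1}-m^{p-1}\big)\ge 0 .
\]
Hence $h_r\ge \max(u^{(\alpha)}_r,u^{(\theta)}_r)^{p-1}\ge (u^{(\alpha)}_r)^{p-1}\ge (u_0\wedge\Eq^{(\alpha)})^{p-1}$. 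This holds in either ordering and does not need the comparison result. It is precisely the lower bound in \eqref{eq:sol_compar} that the paper uses ("it is easy to check").

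This is also where the missing factor $p$ in the stated rate matters. The secant slope is bounded below by $M^{p-1}$, but in general not by $p\,M^{p-1}$. For example, $p=3/2$, $m=1$, $M=4$ gives slope $7/3<3$. Inserting $c\,h_r\ge c(u_0\wedge\Eq^{(\alpha)})^{p-1}$ into your variation-of-constants formula closes (iii) exactly as stated.
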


\begin{proof}

Let $f(u) = \theta - c u^p$. Clearly, $f\in\mathcal{C}^1([0,\infty);\R)$, hence it is locally Lipschitz continuous in $\R_+$. By the Picard-Lindel\"of Theorem (see, e.g., \cite[Sec. 2.2]{perko2013differential}), there exists a unique local solution $(u^{(\theta)}_t)_{0\le t\le t_f}$ for the autonomous system \eqref{eq:ODE}, for some time $t_f>0$. 

Let $t^*=\inf\{t>0:\ u_t = 0\}$ and assume $t^* <\infty$. Then, from continuity of $f$
\[\lim_{t\uparrow t^*}f(u_t) = \theta>0,\]
i.e. there exists $\delta>0$ such that
\[\dfrac{d\uu_t}{dt}>\theta/2,\quad \forall t^*-\delta<t<t^*.\]

By integrating, we obtain that 
\[ -\frac{\theta}2(t^*-t)> u_t,\quad \forall t^*-\delta<t<t^*,\] 
which contradicts the fact that the trajectory approaches the value 0 from above. Hence, $u^{(\theta)}_\cdot$ is strictly positive in $[0,t_f]$.

We now show that the local solution is in fact global whenever $u_0>0$, which concludes the proof of \ref{item:wellposedness_ODE}. To this aim, we consider the sets
    \begin{align*}
    A_{+}\;(\text{Growth}) &=
        \bigl\{(t,\uu_t)\in\mathbb{R}_{+}\times\mathbb{R}_{+}:\;
               \theta > c\,\uu^{p}_t\bigr\},\\[2pt]
    A_{-}\;(\text{Decay}) &=
        \bigl\{(t,\uu_t)\in\mathbb{R}_{+}\times\mathbb{R}_{+}:\;
               \theta < c\,\uu^{p}_t\bigr\}.
\end{align*}

Then, if $(0,u_0)\in A_+$, $\dfrac{d\uu_t}{dt}>0$ and $\dfrac{d^2\uu_t}{dt^2}=-\,p\,c\,\uu^{p-1}_t\,\dfrac{d\uu_t}{dt} < 0$, implying that the solution is increasing and concave. Similarly, if $(0,u_0)\in A_-$, the solution is decreasing and convex. Moreover, when $u_0 = \Eq=\big(\frac{\theta}{c}\big)^{1/p}$, then the solution is constant $u_t\equiv \Eq$. In all cases, the solution $u^{(\theta)}$ is monotone and remains bounded by its initial value $u_0$ and its equilibrium point \(\Eq\). More precisely, $\uu_t^{(\theta)}$ stays in the compact $[\min(u_0, \Eq), \max(u_0, \Eq)]\subset\R_+$, for all $t\in[0,t_f]$, thus no finite-time blow-up can occur. By the standard continuation theorem for ODEs, the unique local solution extends to a unique global solution defined for all $t \ge 0$, and this solution satisfies $u_t\rightarrow\Eq$ as $t\to\infty$.

In order to prove \ref{item:convergence_to_equi_ODE}, let us consider $\tilde{u}_t^{(\theta)}:= \uu^{(\theta)}_t - \Eq^{(\theta)}$. Then, $\tilde{\uu}_t^{(\theta)}\to0$ as $t\to\infty$, 
and from Taylor expansion:
\begin{align*}
({\uu}_t^{(\theta)})^p &= (\Eq^{(\theta)})^p + p\,  \xi^{p-1}_t \, \tilde{\uu}_t^{(\theta)},
\end{align*}
for some $\xi_t$ between $\uu_t^{(\theta)}$ and $\Eq^{(\theta)}$.

Since $\theta - c\, (\Eq^{(\theta)})^p = 0$, we get the linearized equation around the equilibrium:
\begin{equation*}
\dfrac{d}{dt}\tilde{u}_t^{(\theta)} = -c\,p\,  \xi_t^{p-1} \, \tilde{\uu}_t^{(\theta)},\qquad \tilde{\uu}_0 = \uu_0 -\Eq^{(\theta)},
\end{equation*}
having solution
\begin{equation*}
\tilde{u}_t^{(\theta)} = (u_0-\Eq^{(\theta)}) \, e^{-c\, p\, \int_0^t \xi_s^{p-1}ds}, 
\end{equation*}

When $u_0>\Eq^{(\theta)}$, we have that $\uu_t\ge \xi_s\ge \Eq^{(\theta)}$. Otherwise, when $u_0<\Eq^{(\theta)}$, we have $u_0\le u_t < \Eq^{(\theta)}$ for all $t\ge0$. Thus,
\begin{align*}
|\tilde{\uu}_t^{(\theta)}|&\le |u_0-\Eq^{(\theta)}| \ e^{-cp \,\left(\min\{ u_0,\Eq^{(\theta)}\}\right)^{p-1}\, t},
\end{align*}
as desired.

To prove assertion \ref{lem:distance-between-ode-solutions} we follow a similar strategy by linearizing the ODE. 

Denote $\Delta u_t := u_t^{(\theta)} -u_t^{(\alpha)}$, and notice that
\begin{align*}
(u_t^{(\theta)})^p - (u_t^{(\alpha)})^p&= \Delta u_t\, h_t,
\end{align*}
where $h_t = p\int_0^1 (z u_t^{(\theta)} + (1-z) u_t^{(\alpha)})^{p-1}dz$. 

Then, since the solutions $u^{(\theta)}$ and $u^{(\alpha)}$ are positive, it is easy to check that 
\begin{equation}\label{eq:sol_compar}(u_0\wedge \Eq^{(\alpha)})^{p-1}\leq (\uu_t^{(\alpha)})^{p-1} \leq h_t\leq p(\uu_t^{(\theta)}\vee \uu_t^{(\alpha)})^{p-1}.
\end{equation}

Considering the linear equation
\begin{equation*}
\dfrac{d\Delta u_t}{dt} = (\theta-\alpha) - c \Delta u_t h_t,\qquad \Delta u_0 = 0,
\end{equation*}
having solution $\Delta u_t = (\theta-\alpha)\int_0^t \exp\{-c\int_s^t h_rdr\} ds,$
from \eqref{eq:sol_compar}  it is clear that
\begin{align*}
|\Delta u_t| &\leq |\theta-\alpha|\int_0^t \exp\{-c\, (u_0\wedge \Eq^{(\alpha)})^{p-1} \,(t-s)\} ds.
\end{align*}
Furthermore,  when $\theta> \alpha$, we have $u_t^{(\theta)} -u_t^{(\alpha)}> 0$, which concludes the proof.
\end{proof}

\subsection{On the non-linear process}
\paragraph{Proof of Proposition \ref{prop:well-posedness-and-properties-NLSDE} }

\begin{proof}[Proof of Proposition \ref{prop:well-posedness-and-properties-NLSDE}] Let $\uu$ the solution of the Equation \eqref{eq:ODE}, with initial condition $u_0>0$, and consider the SDE:
\begin{equation}
    X^{(\uu)}_t = X^{(\uu)}_0 + \int_0^t \big(\theta-c(\uu_s)^{p-1}X^{(\uu)}_s\big)ds + \int_0^t\sigma X^{(\uu)}_sdW_s, \quad X^{(\uu)}_0 = X_0.
\end{equation} 

The unique strong solution of this SDE is given by (see e.g. \cite[Sect. 5.6] {karatzas1991}) :
\begin{equation}\label{eq:explicitsolution}
\begin{aligned}
    X_t^{(\uu)} &=  X_0\exp\left(-\int_0^t\left(c  \uu_s^{p-1}+\frac{\sigma^2}{2}\right)ds + \sigma W_t \right)\\
    &\quad  + \theta \int_0^t \exp\left(-\int_s^t \left(c \uu_\tau^{p-1}+\frac{\sigma^2}{2}\right)d\tau + \sigma (W_t-W_s) \right)  ds,
\end{aligned}
\end{equation}
which is $\P$-a.s. strictly positive.
Moreover, taking expectations and applying Fubini-Tonelli:
\begin{equation}
    \E[X^{(\uu)}_t] = \E[X_0] + \int_0^t \big(\theta-c(\uu_s)^{p-1}\E[X^{(\uu)}_s]\big)ds.
\end{equation}

Notice that if we impose the initial ODE value $u_0=\E[X_0]$ and consider $g(t) = \E[X^{(\uu)}_t] - u_t$, the we get $g(0)=0$ and from the chain rule:
\begin{equation*}
g^2(t) = -2\, c\, \int_0^t g^2(s)\, (u_s)^{p-1}ds.
\end{equation*}

From Lemma \ref{prop:ode_properties}-\ref{item:wellposedness_ODE} we conclude $0\le g^2(t)\le 0$, implying $\E[X^{(\uu)}] = u_t$, for all $t\ge0$, i.e. $ X^{(\uu)}$ solves Equation \eqref{eq:ODE}.

To analyse the finiteness of the moments it is enough to identify $X^{(\uu)}$ as in \eqref{eq:explicitsolution}. Indeed, for $\gamma\ge1$ we have from Jensen inequality
\begin{align*}
(X_t^{(\uu)})^{\gamma} &= \exp \left\{-c\gamma\int_0^t  \uu_s^{p-1}ds - \frac{\sigma^2}{2}\gamma t + \sigma\gamma  W_t \right\} \\
& \quad \times \left( X_0 + \theta \int_0^t \exp \left\{c\int_0^s  \uu_r^{p-1}dr + \frac{\sigma^2}{2} s - \sigma W_s \right\} ds \right)^{\gamma}\\
&\le 2^{\gamma-1}\,\exp \left\{\sigma\gamma  W_t \right\} \left( X_0^{\gamma} + \theta^\gamma t^{\gamma-1} \int_0^t \exp \left\{c\gamma\int_0^s  \uu_r^{p-1}dr + \frac{\sigma^2}{2}\gamma s - \sigma\gamma W_s \right\} ds \right).
\end{align*}

Hence, from Lemma \ref{prop:ode_properties}, there exists a deterministic constant $C$, depending in particular on $\Eq^{(\theta)}$, such that
\begin{align*}
\E\Big[\sup_{0\le t\le T}(X_t^{(\uu)})^{\gamma}
\Big]&\le C\, \E\Big[\sup_{0\le t\le T}\left( X_0^{\gamma}\exp \left\{\sigma\gamma  W_t \right\} + \int_0^t\exp \left\{\sigma\gamma  (W_t-W_s) \right\} ds \right)\Big].
\end{align*}

From independence between $X_0$ and $W$ and Doob's martingale inequality, if $X_0\in L^\gamma(\Omega)$, then the first term is finite; similarly for the second term, obtaining:  
\begin{align*}
\E\Big[\sup_{0\le t\le T}(X_t^{(\uu)})^{\gamma}
\Big]&<\infty.
\end{align*}
This property can be extended to $\gamma\in(0,1)$ by applying H\"older inequality.

Similarly, for negative moments we have that there exists a deterministic constant $C>0$, such that: 
\begin{align*}
(X_t^{(\uu)})^{-\gamma} &= \exp \left\{c\gamma\int_0^t  \uu_s^{p-1}ds + \frac{\sigma^2}{2}\gamma t - \sigma\gamma  W_t \right\} \\
& \quad \times \left( X_0 + \theta \int_0^t \exp \left\{c\int_0^s  \uu_r^{p-1}dr + \frac{\sigma^2}{2} s - \sigma W_s \right\} ds \right)^{-\gamma}\\
&\le C\, \exp \left\{- \sigma\gamma  W_t \right\} X_0^{-\gamma}.
\end{align*}

Then, from independence between $X_0$ and $W$ and Doob's martingale inequality we get \[\E\Big[\sup_{0\le t\le T}(X_t^{(\uu)})^{-\gamma}
\Big]<\infty.\]

Finally, since $\E[X_t^{\uu}] = \uu_t \in [u_0\wedge\Eq,\, \uu_0\vee\Eq]$ for all $t\ge0$,  property \eqref{eq:uniform-first-moment} is straightforward. 
\end{proof}

\paragraph{A digression on linear stochastic differential equations  }

Let $a=c\alpha_0^{p-1}$, for some positive constant $\alpha_0$. Then, it follows that the process defined by  
\begin{equation}\label{eq:linear-sde-cc}
dX_t = (\theta - aX_t)dt + \sigma X_t dW_t,\;\; X_0 = x_0>0,
\end{equation}
is  Harris positive recurrent with invariant measure $\pi^{\theta,\sigma}_a$ given by an Inverse Gamma distribution with shape parameter   $1 + \frac{2a}{\sigma^2}$  and scale parameter $\frac{2\theta}{\sigma^2}$ (see, e.g., \cite{10.3150/bj/1116340291,zbMATH00195091}).

In particular, according to \cite[Thm. 1.6]{EvaLocherbach2015}, for any measurable, positive functions $f$ and $g$ with $0 <\int g(u) \pi^{\theta,\sigma}_a(du) <+ \infty$, and any $X_0 = x\in\R_+$:
\begin{equation}\label{eq:ergodig-means-linear-sde}
\lim_{t\to\infty} \frac{\int_0^t f(X_s)ds}{\int_0^t g(X_s)ds} = \frac{\int f(u) \pi^{\theta,\sigma}_a(du)}{\int g(u)\pi^{\theta,\sigma}_a(du)} \quad \P_x\text{-a.s.}
\end{equation}

\begin{remark}[Negative Moments of the Inverse Gamma Law]
\label{rem:neg_moments_bounding}
If $\pi_{\alpha}^\beta$  is a Inverse Gamma law with shape parameter $\alpha$ and scale parameter $\beta$ , then for  any integer $r \ge 1$ it holds
\begin{equation}\label{eq:negative-moments-inverse-gamma}
 \int \frac{1}{u^r} \pi^{\beta}_\alpha(du)= \frac{\alpha(\alpha+1)\cdots(\alpha+r-1)}{\beta^r}.
\end{equation}

\end{remark}

The following lemma exploits the ergodicity property of linear SDEs together with the characterization of $\E[X_t]$ from Lemma \ref{prop:ode_properties} to establish the asymptotic behavior of the ergodic moments of the solution to \eqref{eq:SM1}.

\begin{lemma}\label{lem:Q12}
Let $(X_t)_{0\le t\ge T}$ be the solution of Equation \eqref{eq:SM1}. 
\begin{enumerate}[label= (\roman*)]
\item \label{lem:Q12-i} For $u_{eq}=\Big(\frac{{\theta}}{c}\Big)^{1/p}$, we have that
\begin{equation}\label{eq:value_Q12}
Q_{12}:=\lim_{t\to+\infty} \frac{\int_0^t \frac{ds}{X_s}}{\int_0^t \frac{ds}{X_s^2}} = \frac{\theta}{\sigma^2+c(\Eq)^{p-1}}.
\end{equation}
\item \label{lem:Q12-ii} For any bounded measurable $g:[0,\infty)\to[0,\infty)$ such that
$
\bar{g}:=\lim_{t\to+\infty}g(t)<\infty,
$
we have
\begin{equation}\label{eq:gQ12}
\begin{aligned}
\lim_{t\to+\infty} \frac{\int_0^t \frac{g(s)ds}{X_s}}{\int_0^t \frac{ds}{X_s^2}} = \bar{g}Q_{12}.
\end{aligned}
\end{equation}

\end{enumerate}
\end{lemma}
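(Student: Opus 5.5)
The plan is to compare $X$ with a linear, time-homogeneous SDE of the form \eqref{eq:linear-sde-cc}, whose ergodic ratios are known from \eqref{eq:ergodig-means-linear-sde}, and to transfer the limits using the exponential convergence of $u_t$ to $\Eq$ from Lemma \ref{prop:ode_properties}\ref{item:convergence_to_equi_ODE}. Throughout, $u_t=\E[X_t]$ by the proof of Proposition \ref{prop:well-posedness-and-properties-NLSDE}. Hence $X$ solves $dX_t=(\theta-c u_t^{p-1}X_t)dt+\sigma X_t dW_t$, with a deterministic coefficient $a_t:=c u_t^{p-1}\to a:=c\Eq^{p-1}=c^{1/p}\theta^{1-1/p}$.

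\textbf{Step 1 (the frozen process).} Let $Y$ solve \eqref{eq:linear-sde-cc} with $a=c\Eq^{p-1}$, driven by the same $W$ and started at $Y_0=X_0$. Its invariant law is Inverse Gamma with shape $1+2a/\sigma^2$ and scale $2\theta/\sigma^2$. Remark \ref{rem:neg_moments_bounding} gives
\[
\int u^{-1}\,d\pi=\frac{\sigma^2+2a}{2\theta},\qquad \int u^{-2}\,d\pi=\frac{(\sigma^2+2a)(2\sigma^2+2a)}{4\theta^2}.
\]
The ratio of these is $\theta/(\sigma^2+a)$, which is exactly \eqref{eq:value_Q12}. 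By \eqref{eq:ergodig-means-linear-sde}, applied conditionally on $X_0$ (which is independent of $W$), we get $\int_0^t Y_s^{-1}ds\big/\int_0^t Y_s^{-2}ds\to \theta/(\sigma^2+a)$ a.s. Moreover $\frac1t\int_0^t Y_s^{-r}ds\to\int u^{-r}d\pi>0$ a.s. for $r=1,2$.

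\textbf{Step 2 (comparison, the main obstacle).} We need $\frac1t\int_0^t |X_s^{-r}-Y_s^{-r}|ds\to0$ a.s. for $r=1,2$. Using the explicit formula \eqref{eq:explicitsolution}, write $X_t=\mathcal{E}_t^{-1}\bigl(X_0+\theta\int_0^t \mathcal{E}_s\,ds\bigr)$, where $\mathcal{E}_t=\exp(\int_0^t a_\tau d\tau+\frac{\sigma^2}{2}t-\sigma W_t)$. Let $\mathcal{E}^Y$ be the analogous factor with $a_\tau$ replaced by $a$. The discrepancy is then controlled by $\int_0^t|a_\tau-a|d\tau$, and this integral is bounded uniformly in $t$ because $|a_\tau-a|\le C e^{-\kappa\tau}$ by Lemma \ref{prop:ode_properties}\ref{item:convergence_to_equi_ODE}. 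Concretely, $\mathcal{E}_s/\mathcal{E}_t=(\mathcal{E}^Y_s/\mathcal{E}^Y_t)\,e^{-\int_s^t(a_\tau-a)d\tau}$. This yields a pathwise sandwich $e^{-K}Y_t\le X_t\le e^{K}Y_t$ with the deterministic constant $K=\int_0^\infty|a_\tau-a|d\tau<\infty$.

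A sandwich alone only gives bounded ratios, so we need a sharper statement. For $s\ge t_0$ the factor $e^{-\int_s^t(a_\tau-a)d\tau}$ lies within $e^{\pm\varepsilon(t_0)}$ with $\varepsilon(t_0)\to0$. The remaining contribution from $[0,t_0]$, namely $(X_0+\theta\int_0^{t_0}\cdots)/\mathcal{E}_t$, enters with the same distortion factor $e^{\pm K'}$ only on the piece $\int_0^{t_0}$. Splitting $X_t=A_t+B_t$ accordingly, with $B_t$ from $[t_0,t]$ where $B_t\in[e^{-\varepsilon}B^Y_t,e^{\varepsilon}B^Y_t]$, we get $X_t\in[e^{-\varepsilon}Y_t - C A^Y_t,\ e^{K}Y_t]$, roughly. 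The cleaner route I will try first is to observe that both $X_t$ and $Y_t$ are at least $B^Y_t e^{-\varepsilon}$. The ratio $A^Y_t/Y_t$ tends to $0$ in Cesàro mean, because $A^Y_t=\mathcal{E}^Y_{t_0}\,A^Y_{t_0}/\mathcal{E}^Y_t$ decays like $e^{-(a+\sigma^2/2)t+\sigma W_t}$, which goes to $0$ a.s. while $Y$ is positive recurrent. This gives $\limsup_t$ of the ratio of time averages within $e^{\pm 2\varepsilon(t_0)}$ of the $Y$-ratios. Letting $t_0\to\infty$ yields \ref{lem:Q12-i}.

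\textbf{Step 3 (item \ref{lem:Q12-ii}).} Write $g(s)=\bar g+(g(s)-\bar g)$. Given $\eta>0$, choose $s_0$ with $|g-\bar g|\le\eta$ on $[s_0,\infty)$. The contribution of $[0,s_0]$ is a finite random quantity divided by $\int_0^t X_s^{-2}ds$, which tends to $\infty$ a.s. by Step 1 and the comparison. The contribution of $[s_0,t]$ is at most $\eta\int X^{-1}/\int X^{-2}\to\eta Q_{12}$. Letting $\eta\to0$ gives $\bar g Q_{12}$.
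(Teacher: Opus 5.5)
Your route differs from the paper's. The paper restarts at $\bar t_\epsilon$ and traps $X$ between two frozen linear SDEs, $X^\flat\le X\le X^\sharp$, with coefficients $c(u_{eq}\pm\epsilon)^{p-1}$, using the SDE comparison theorem. It then lets $\epsilon\to0$ in the explicit inverse-gamma moments. You instead use a single frozen process $Y$ with the exact limiting coefficient $a$, and a pathwise multiplicative comparison read off the explicit formula. That comparison needs $\int_0^\infty|a_\tau-a|\,d\tau<\infty$, i.e.\ the exponential rate of Lemma \ref{prop:ode_properties}\ref{item:convergence_to_equi_ODE}, rather than mere convergence of $u_t$. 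Step 1 is correct, and so is the global sandwich $e^{-K}Y\le X\le e^{K}Y$. Step 3 is the paper's argument for (ii).

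\textbf{The gap is in Step 2, the refinement you yourself flag as the crux.}

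First, the ``cleaner route'' $X_t\ge e^{-\varepsilon}B^Y_t$ cannot control $\int X_s^{-r}ds$. Since $B^Y_{t_0}=0$ and $B^Y_s\approx\theta(s-t_0)$ near $t_0$, we get $\int_{t_0}(B^Y_s)^{-r}ds=\infty$.

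Second, your upper end $X_t\le e^{K}Y_t$ is too crude for a lower bound on $\frac1t\int X^{-r}$. It only yields a factor $e^{-rK}$, which does not disappear as $t_0\to\infty$.

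Third, the claim that $A^Y_t/Y_t\to0$ in Ces\`aro mean ``because $Y$ is positive recurrent'' is unproved. It is also not the quantity that enters: you need the $Y^{-r}$-weighted average.

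A correct version goes as follows. Put $\rho_s=A^Y_s/Y_s\in[0,1]$ and take $s\ge t_0$. Combine $X_s\ge Y_s\max\{e^{-K},e^{-\varepsilon}(1-\rho_s)\}$ with $X_s\le e^{\varepsilon}Y_s+e^{K}A^Y_s$ to get
\[
e^{-r\varepsilon}Y_s^{-r}-C\,A^Y_sY_s^{-r-1}\;\le\; X_s^{-r}\;\le\; e^{r\varepsilon}Y_s^{-r}+C\,A^Y_sY_s^{-r-1},\qquad C=C(K,r).
\]
Then $\frac1t\int_0^tA^Y_sY_s^{-r-1}ds\to0$ almost surely. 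This holds because $A^Y_s\to0$ almost surely and $\frac1t\int_0^tY_s^{-r-1}ds\to\int u^{-r-1}d\pi<\infty$ by Remark \ref{rem:neg_moments_bounding}. The ratio of time averages then lies eventually within $e^{\pm3\varepsilon(t_0)}$ of $\theta/(\sigma^2+a)$ (not $e^{\pm2\varepsilon}$), and you let $t_0\to\infty$ along a countable sequence.

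Simpler still, restart the frozen process at $t_0$ from $X_{t_0}$. Then only factors $e^{-\int_r^s(a_\tau-a)d\tau}$ with $r\ge t_0$ occur, so $e^{-\varepsilon(t_0)}Y^{(t_0)}_s\le X_s\le e^{\varepsilon(t_0)}Y^{(t_0)}_s$ for all $s\ge t_0$, and the $A^Y$ term never appears. This is exactly the paper's restart at $\bar t_\epsilon$, with two differences. The explicit formula replaces the comparison theorem. A single process with the exact coefficient replaces the $\pm\epsilon$ pair, so no continuity-in-$\epsilon$ step is needed. The price is that you use the integrable rate of convergence of $u_t$ rather than just its limit.
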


\begin{proof}

Let $\epsilon>0$,  and define 
$$
\bar{t}_\epsilon = \frac{1+\epsilon}{cp\left(\Eq\wedge u_0\right)^{p-1}}\log\left(\frac{\left|\uu_0-\Eq\right|}{\epsilon}\right),
$$
where $\Eq$ is the limit point of the solution to the ODE \eqref{eq:ODE}.

From Lemma \ref{prop:ode_properties}-\ref{item:convergence_to_equi_ODE}, for all $t\geq \bar{t}_\epsilon$, we get $|\uu_t-\Eq|<\epsilon$. Moreover, for all $t\geq \bar{t}_\epsilon$ and $x\geq0$,
$$
b^\sharp(x):=\theta-c(\Eq - \epsilon)^{p-1}x \geq \theta-cu^{p-1}_tx\geq \theta -c(\Eq+ \epsilon)^{p-1}x =: b^\flat(x).
$$

Let $X^\sharp$ (resp. $X^\flat$) the solution of the SDE with $b^\sharp$ (resp. $b^\flat$), same diffusion as in Equation \eqref{eq:SM1}, and initial condition
$X^\sharp_{\bar{t}_\epsilon}=X_{\bar{t}_\epsilon}$ (resp. $X^\flat_{\bar{t}_\epsilon}=X_{\bar{t}_\epsilon}$). Then, recalling that $(u_t)_{t\ge0}$ is bounded and applying a comparison theorem for SDEs (see, e.g., \cite[Prop. 2.18]{karatzas1991}), we have:
\begin{align*}
 {X^\sharp_s}^{-1} &\leq     {X_s}^{-1}   \leq {X^\flat_s}^{-1},\quad \text{ for all }\quad s\geq \bar{t}_\epsilon.
\end{align*}

Hence, for any $t\geq \bar{t}_\epsilon$, and any $\gamma>0$:
\begin{align}\label{eq:bounds_sost_bemol}
\int_0^{\bar{t}_\epsilon} \frac{ds}{X_s^\gamma} + \int_{\bar{t}_\epsilon}^t \frac{ds}{(X^\sharp_s)^\gamma} &\leq   \int_0^{t} \frac{ds}{X_s^\gamma}   \leq \int_0^{\bar{t}_\epsilon} \frac{ds}{X_s^\gamma} +\int_{\bar{t}_\epsilon}^t \frac{ds}{(X^\flat_s)^\gamma}.
\end{align}

Notice that $\int_0^{\bar{t}_\epsilon} X_s^{-\gamma}ds<\infty$ almost surely, since we are integrating an almost surely continuous and positive function in a deterministic compact interval.

Then, taking $\gamma=1,2$ in \eqref{eq:bounds_sost_bemol}, we have:
\begin{align*}
\limsup_{t\to+\infty}\frac{1}{t}\int_0^t \frac{ds}{X_s}   
  & \le\lim_{t\to+\infty} \frac{1}{t}\int_0^{\bar{t}_{\epsilon}} \frac{ds}{X_s} +\limsup_{t\to+\infty} \frac{1}{t}\int_{\bar{t}_{\epsilon}}^t \frac{ds}{X^\flat_s}     \\
  & =  \lim_{t\to+\infty} \frac{t-\bar{t}_{\epsilon}}{t}\limsup_{t\to+\infty} \frac{1}{t-\bar{t}_{\epsilon}}\int_{\bar{t}_{\epsilon}}^t \frac{ds}{X^\flat_s},
\end{align*}
and similarly
\begin{align*}
\liminf_{t\to+\infty}\frac{1}{t}\int_0^t \frac{ds}{X_s^2}   &\geq\liminf_{t\to+\infty} \frac{1}{t-\bar{t}_{\epsilon}}\int_{\bar{t}_{\epsilon}}^t \frac{ds}{\left(X^\sharp_s\right)^2}.
\end{align*}

According to \eqref{eq:ergodig-means-linear-sde}, with $g(x)=1$, and Remark \ref{rem:neg_moments_bounding}:

\begin{align*}
\lim_{t\to\infty} \frac{1}{t}\int_0^t (X_s^\flat)^{-\gamma}ds &= \frac{(\frac{2c\,(\Eq+\epsilon)^{p-1}}{\sigma^2}+1)(\frac{2c\,(\Eq+\epsilon)^{p-1}}{\sigma^2}+2)\cdots(\frac{2c\,(\Eq+\epsilon)^{p-1}}{\sigma^2}+\gamma)}{\left(\frac{2\theta}{\sigma^2}\right)^\gamma},\\
\lim_{t\to\infty} \frac{1}{t}\int_0^t (X_s^\sharp)^{-\gamma}ds &= \frac{(\frac{2c\,(\Eq-\epsilon)^{p-1}}{\sigma^2}+1)(\frac{2c\,(\Eq-\epsilon)^{p-1}}{\sigma^2}+2)\cdots(\frac{2c\,(\Eq-\epsilon)^{p-1}}{\sigma^2}+\gamma)}{\left(\frac{2\theta}{\sigma^2}\right)^\gamma}.
\end{align*}

Putting everything together, we obtain
\begin{align*}
\limsup_{t\to+\infty} \frac{\int_0^t \frac{ds}{X_s}}{\int_0^t \frac{ds}{X_s^2}} & \leq {2\theta}\, \frac{2c(\Eq+ \epsilon)^{p-1}+\sigma^2}{\left(2c(\Eq- \epsilon)^{p-1}+\sigma^2\right)\left(2c(\Eq- \epsilon)^{p-1}+2\sigma^2\right)}.
\end{align*}
The left-hand side does not depend on $\epsilon$, while the right-hand side is continuous in $\epsilon$. Hence, letting $\epsilon\rightarrow0$, we get
\begin{align*}
\limsup_{t\to+\infty} \frac{\int_0^t \frac{ds}{X_s}}{\int_0^t \frac{ds}{X_s^2}}  &\leq   \frac{\theta}{c(\Eq)^{p-1}+\sigma^2},\, \P\text-{a.s.}
\end{align*}

An analogous argument yields
\begin{align*}
\liminf_{t\to+\infty} \frac{\int_0^t \frac{ds}{X_s}}{\int_0^t \frac{ds}{X_s^2}} &\geq 2\theta\, \frac{ 2c(\Eq- \epsilon)^{p-1}+\sigma^2}{\left(2c(\Eq+ \epsilon)^{p-1}+\sigma^2\right)\left(2c(\Eq+ \epsilon)^{p-1}+2\sigma^2\right)}.
\end{align*}

Thus, letting $\epsilon\to 0$ gives
$$
\lim_{t\to+\infty} \frac{\int_0^t \frac{ds}{X_s}}{\int_0^t \frac{ds}{X_s^2}} = \frac{\theta}{c(\Eq)^{p-1} + \sigma^2},
$$
as desired.\\

In order to prove \ref{lem:Q12-ii}, we write
\begin{align*}
  \frac{\int_{0}^t \frac{g(s)}{X_s}ds}{\int_0^t  \frac{1}{X_s^2}ds}  
  &=     \frac{\int_{0}^t \frac{\bar{g}}{X_s}ds}{\int_0^t  \frac{1}{X_s^2}ds}+\frac{\int_{0}^t\frac{g(s)-\bar{g}}{X_s}ds}{\int_0^t  \frac{1}{X_s^2}ds}=: \bar{g}Q_{12}(t) + R(t).
\end{align*}

Then, from \ref{lem:Q12-i}, it remains to show that $\lim_{t\to\infty}R(t)=0$, $\P$-almost surely.  To this end, let $t_\epsilon$ such that for all $t\geq t_\epsilon$, $|g(t)-\bar{g}|<\epsilon$. It follows that
\begin{align*}
 |R(t)| &\leq  \|g\|_\infty\frac{\int_0^{t_\epsilon} \frac{1}{X_s}ds}{\int_0^t  \frac{1}{X_s^2}ds} + \epsilon\frac{\int_{ 0}^t \frac{1}{X_s}ds}{\int_0^t  \frac{1}{X_s^2}ds} - \epsilon\frac{\int_0^{ t_\epsilon} \frac{1}{X_s}ds}{\int_0^t  \frac{1}{X_s^2}ds}=:R_{1}(t)+R_{2}(t)-R_{3}(t).
\end{align*}

Since $\int_0^{ t_\epsilon} X_s^{-1}ds<\infty$ almost surely, and the denominator in  $R(t)$ diverges as $t$ goes to infinity (see the proof of \ref{lem:Q12-i}), we obtain:
$
\lim_{t\to\infty}|R_{1}(t)|+|R_{3}(t)| = 0.
$
Meanwhile, by definition, $
\lim_{t\to\infty}R_2(t) = \epsilon\  Q_{12}$, which yields
$$
\limsup_{t\to\infty} |R(t)| \leq \epsilon\,  Q_{12}.$$
 Taking $\epsilon\to0$ and using that $\P$-a.s. $Q_{12}<\infty$, we conclude.
\end{proof}

\begin{lemma}\label{lem:convergence-of-QuoT} Let $\quo(T)$ defined by
$$
\quo(T):= \frac{\int_0^T\frac{dX_t}{X_t^2}}{\int_0^T\frac{1}{X_t^2}dt},
$$
as in the proof of Lemma \ref{cor:the-estimator-is-eventually-positive}. Then:
\begin{enumerate}[label= (\roman*)]
\item  \label{lem:convergenceAndpositivityQ} It holds,
$$\lim_{T\to\infty}\quo(T) = \frac{\theta\sigma^2}{c^{1/p}\theta^{1-1/p}+\sigma^2},\quad \P\text{-a.s.} $$
Moreover, there exists a random variable $\TT$ such that, for all $T\geq \TT$, $\quo(T)$ is bounded away from zero.
\item \label{lem:tclQ} A central limit theorem for $\quo(T)$ is satisfied:
 $$\sqrt{\frac{ T}{\varInfty}}\left( \quo(T)-\theta+\frac{c\int_{0}^T \frac{\uu_s^{p-1}}{X_s}ds}{\int_0^T  \frac{1}{X_s^2}ds}\right) \Rightarrow \mathcal{N}(0,1),\text{ as $T\to\infty$},$$
where
\begin{equation*}
\varInfty = \frac{2\theta^2\sigma^2}{(2c^{1/p}\theta^{1-1/p}+\sigma^2)(c^{1/p}\theta^{1-1/p}+\sigma^2)}.
\end{equation*}
\end{enumerate}

\end{lemma}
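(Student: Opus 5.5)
Since $X$ solves \eqref{eq:SM1} with $\E[X_s]=\uu_s$, I substitute $dX_t=(\theta-c\uu_t^{p-1}X_t)\,dt+\sigma X_t\,dW_t$ into the numerator of $\quo(T)$. This gives the decomposition
\begin{equation*}
\quo(T)=\theta-\frac{c\int_0^T \frac{\uu_s^{p-1}}{X_s}ds}{\int_0^T\frac{ds}{X_s^2}}+\frac{\sigma\int_0^T\frac{dW_s}{X_s}}{\int_0^T\frac{ds}{X_s^2}}=:\theta-A(T)+M(T),
\end{equation*}
and both items follow by analysing the three terms separately.

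For the drift term $A(T)$, I apply Lemma \ref{lem:Q12}\ref{lem:Q12-ii} with $g(s)=c\,\uu_s^{p-1}$. This $g$ is bounded because $\uu_s\in[u_0\wedge\Eq,u_0\vee\Eq]$ (Lemma \ref{prop:ode_properties}), and it has limit $\bar g=c\Eq^{p-1}$. Hence a.s.
\[
A(T)\to \frac{c\Eq^{p-1}\theta}{\sigma^2+c\Eq^{p-1}}.
\]
Writing $c\Eq^{p-1}=c\,(\theta/c)^{(p-1)/p}=c^{1/p}\theta^{1-1/p}$, I get $\theta-\lim A(T)=\theta\sigma^2/(\sigma^2+c^{1/p}\theta^{1-1/p})$, which is the claimed limit, provided $M(T)\to0$.

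For $M(T)$, I set $N_T=\int_0^T X_s^{-1}dW_s$, a continuous local martingale with $\langle N\rangle_T=\int_0^T X_s^{-2}ds$. By the comparison bounds \eqref{eq:bounds_sost_bemol} and the ergodic limits in the proof of Lemma \ref{lem:Q12}, $\frac1T\langle N\rangle_T$ stays a.s. between two positive finite constants asymptotically, so $\langle N\rangle_T\to\infty$. The strong law of large numbers for continuous local martingales (via Dambis--Dubins--Schwarz) then gives $N_T/\langle N\rangle_T\to0$ a.s., hence $M(T)\to0$. The limit is strictly positive, so there is an a.s. finite random time $\TT$ such that $\quo(T)\ge \tfrac12\cdot\frac{\theta\sigma^2}{c^{1/p}\theta^{1-1/p}+\sigma^2}$ for $T\ge\TT$. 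Concretely, $\TT:=\sup\{T:\quo(T)<\text{half the limit}\}$, which is measurable because $\quo$ has continuous paths.

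For item \ref{lem:tclQ}, the centred quantity is exactly $M(T)$, so I need $\sqrt{T/\varInfty}\,\sigma N_T/\langle N\rangle_T\Rightarrow\mathcal N(0,1)$. Rewriting,
\[
\sqrt{T}\,M(T)=\sigma\,\frac{N_T/\sqrt T}{\langle N\rangle_T/T}.
\]
The key step, and the main obstacle, is the exact a.s. limit $\ell:=\lim_T \frac1T\int_0^T X_s^{-2}ds$. The sandwich argument of Lemma \ref{lem:Q12} gives it with $\epsilon\to0$: using \eqref{eq:negative-moments-inverse-gamma} with $\alpha=1+2a/\sigma^2$, $\beta=2\theta/\sigma^2$, $a=c^{1/p}\theta^{1-1/p}$,
\[
\ell=\frac{(a+\sigma^2/2)(2a+\sigma^2)\cdot 2}{2\theta^2}\cdot\frac{1}{\sigma^2}\cdot\frac{\sigma^2}{1}\quad\text{(simplify)}=\frac{(2a+\sigma^2)(a+\sigma^2)}{2\theta^2\,\cdot}\!\!\ldots
\]
More carefully, $\ell=\alpha(\alpha+1)/\beta^2=\frac{(\sigma^2+2a)(2\sigma^2+2a)}{4\theta^2}$. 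The limiting and $\limsup/\liminf$ bounds coincide as $\epsilon\to0$, exactly as in Lemma \ref{lem:Q12}.

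With $\ell$ in hand, the martingale CLT (for instance $N_T/\sqrt T\Rightarrow\mathcal N(0,\ell)$, since $\langle N\rangle_T/T\to\ell$ a.s., via time change or Rebolledo) and Slutsky give
\[
\sqrt T\,M(T)\Rightarrow\mathcal N\!\left(0,\frac{\sigma^2}{\ell}\right),\qquad \frac{\sigma^2}{\ell}=\frac{2\theta^2\sigma^2}{(2a+\sigma^2)(a+\sigma^2)}=\varInfty.
\]
Dividing by $\sqrt{\varInfty}$ concludes.
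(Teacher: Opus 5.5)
Your proof is correct and is essentially the paper's argument. You use the same decomposition $\quo(T)=\theta-A(T)+M(T)$, treat the drift ratio with Lemma \ref{lem:Q12}\ref{lem:Q12-ii} for $g=c\,\uu^{p-1}$, treat the martingale ratio with the strong law and CLT for martingales plus Slutsky, and identify $\frac1T\int_0^T X_s^{-2}ds\to\sigma^2/\varInfty$ from the inverse-Gamma moments. Your explicit $\epsilon$-sandwich justification of that limit fills in a step the paper leaves implicit, but you should delete the garbled first display for $\ell$ and keep only $\ell=(\sigma^2+2a)(2\sigma^2+2a)/(4\theta^2)$.
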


\begin{proof}
Define the process $(M_t:=\int_0^t  \frac{1}{X_s}dW_s)_{t\ge0}$. From Proposition \ref{prop:well-posedness-and-properties-NLSDE} we know that $M$ is a continuous martingale. Moreover, from the proof of Lemma \ref{lem:Q12} we have $\P$-almost surely that $\langle M \rangle_t = \int_0^T  \frac{1}{X_s^2}ds\to\infty$ when $t\to\infty$. Hence, thanks to the strong law of large numbers for martingales (see, e.g. \cite[Theorem 3.4]{mao2007stochastic})
$$
\frac{M_T }{\langle M\rangle_T } \to 0, \quad \P\text{-a.s. when $T\to\infty$.}
$$
From this and Lemma \ref{lem:Q12}-\ref{lem:Q12-ii}, we get

$$
\lim_{T\to\infty}\quo(T)= \theta - c\lim_{T\to\infty}\frac{\int_{0}^T \frac{\uu_s^{p-1}}{X_s}ds}{\int_0^T  \frac{1}{X_s^2}ds}=\theta -c\uu_{eq}^{p-1}Q_{12}.
$$
From Lemma \ref{lem:Q12}-\ref{lem:Q12-i} it follows that
 $$
\lim_{T\to\infty} \quo(T)= \frac{\theta\sigma^2}{c^{1/p}\theta^{1-1/p}+\sigma^2}>0.
 $$

The second part of the \ref{lem:convergence-of-QuoT}-\ref{lem:convergenceAndpositivityQ}  follows inmediately. 

To prove \ref{lem:tclQ} we rewrite
\begin{equation*}
\begin{aligned}
  \frac{\sqrt{\langle M \rangle_T}}{\sigma}\left( \quo(T)-\theta+\frac{c\int_{0}^T \frac{\uu_s^{p-1}}{X_s}ds}{\int_0^T  \frac{1}{X_s^2}ds}\right)& =     \frac{M_T}{\sqrt{\langle M \rangle_T}}.
  \end{aligned}
\end{equation*}
Then, thanks to the Central Limit Theorem for martingales \cite[Thm 6.31]{hausler2015stable} we have that
$$
  \frac{\sqrt{\langle M \rangle_T}}{\sigma}\left( \quo(T)-\theta+\frac{c\int_{0}^T \frac{\uu_s^{p-1}}{X_s}ds}{\int_0^T  \frac{1}{X_s^2}ds}\right) \Rightarrow \mathcal{N}(0,1),\text{ as $T\to\infty$}.
$$
We identify from Remark \ref{rem:neg_moments_bounding} that:
$$
\lim_{T \to\infty} \sqrt{ \frac{\langle M \rangle_T}{T} } 
= \lim_{T \to\infty}\sqrt{ \frac{1}{T}\int_0^T \frac{1}{X_t^2}dt} = \sqrt{\frac{\sigma^2}{\varInfty}},\quad \text{$\P$-almost surely}.
$$
Thanks to Slutsky's theorem, we have
$$
 \sqrt{\frac{ T}{\varInfty}}\left( \quo(T)-\theta+\frac{c\int_{0}^T \frac{\uu_s^{p-1}}{X_s}ds}{\int_0^T  \frac{1}{X_s^2}ds}\right) \Rightarrow \mathcal{N}(0,1),\text{ as $T\to\infty$},
$$
where
$$
\varInfty = \frac{2\theta^2\sigma^2}{(2c^{1/p}\theta^{1-1/p}+\sigma^2)(c^{1/p}\theta^{1-1/p}+\sigma^2)}.
$$
\end{proof}

\subsection{On the continuously sampled estimator}\label{ap:proofs-convergence-surrogate-estimators}

To address the convergence of the continuously sampled estimator, we introduce a {\it proxy}, which consist in considering to observe the process at large times.   
\begin{definition}
The asymptotic-in-time estimator $\hat{\theta}^{(k)}$, is given by $\hat{\theta}^{(0)}=0$ and
\begin{equation}\label{eq:second-surrogate-estimator}
\begin{aligned}
    \hat{\theta}^{(k)}& = \theta  + \lim_{T\to\infty}\frac{c\int_{0}^T \frac{({u}_{s}^{(k)})^{p-1}-\uu_s^{p-1}}{X_s}ds}{\int_0^T  \frac{1}{X_s^2}ds},\\
    u_t^{(k)} &= \uu_0+ \int_0^t\Big(\hat{\theta}^{(k-1)}-c\left(u^{(k)}_s\right)^p\Big)ds.
\end{aligned}
\end{equation}
\end{definition}

\begin{lemma}\label{lemma:integrals-dynamics-vs-equilbrium}
Let $(X_t)_{0\le t\ge T}$ be the solution of Equation \eqref{eq:SM1} and $(\hat{\theta}^{(k)},\, \uu^{(k)})$ be the asymptotic-in-time estimator with initialization $\hat{\theta}^{(0)}=0$.
Then, for all $k\geq1$,  $\hat{\theta}^{(k)}$ is in fact deterministic and strictly positive. Moreover, it satisfies the recurrence relation
$$
\hat{\theta}^{(k)} = \theta \frac{c^{1/p}(\hat{\theta}^{(k-1)})^{1-1/p}+\sigma^2}{c^{1/p}\theta^{1-1/p}+\sigma^2}.$$
\end{lemma}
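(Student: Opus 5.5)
We argue by induction on $k$, using Lemma \ref{lem:Q12}-\ref{lem:Q12-ii} as the main tool. For $k=1$, $\hat\theta^{(0)}=0$, so $u^{(1)}$ solves $\frac{d}{dt}u^{(1)}_t=-c(u^{(1)}_t)^p$ with $u^{(1)}_0=u_0>0$. This solution is explicit, positive and decreasing to $0$; for $p>1$ we have $u^{(1)}_t=(u_0^{1-p}+c(p-1)t)^{-1/(p-1)}$. Set
\[
g(s):=(u^{(1)}_s)^{p-1}-u_s^{p-1}.
\]
This $g$ is bounded on $[0,\infty)$ and has limit $\bar g=0-\Eq^{p-1}$, by Lemma \ref{prop:ode_properties}.

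Lemma \ref{lem:Q12}-\ref{lem:Q12-ii} is stated for nonnegative $g$, but its proof only uses boundedness and the existence of the limit $\bar g$, since it is a linear decomposition. Otherwise we split $g$ into its two nonnegative pieces $(u^{(1)})^{p-1}$ and $u^{p-1}$, each bounded with a limit, and apply the lemma to each separately. Either way,
\[
\hat\theta^{(1)}=\theta-c\,\Eq^{p-1}Q_{12}=\theta-\frac{c\Eq^{p-1}\theta}{\sigma^2+c\Eq^{p-1}}=\frac{\theta\sigma^2}{\sigma^2+c^{1/p}\theta^{1-1/p}},
\]
using $c\Eq^{p-1}=c^{1/p}\theta^{1-1/p}$. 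This is deterministic and strictly positive, and it matches the recurrence with $\hat\theta^{(0)}=0$.

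For the inductive step, assume $\hat\theta^{(k-1)}$ is deterministic and strictly positive. Then $u^{(k)}$ is the deterministic solution of \eqref{eq:ODE} with parameter $\hat\theta^{(k-1)}$. By Lemma \ref{prop:ode_properties}-\ref{item:wellposedness_ODE}, \ref{item:convergence_to_equi_ODE} it is positive, bounded, and converges to $(\hat\theta^{(k-1)}/c)^{1/p}$. Hence $g(s)=(u^{(k)}_s)^{p-1}-u_s^{p-1}$ is bounded with limit
\[
\bar g=(\hat\theta^{(k-1)}/c)^{(p-1)/p}-(\theta/c)^{(p-1)/p}.
\]
Applying Lemma \ref{lem:Q12} again, the limit in \eqref{eq:second-surrogate-estimator} exists $\P$-a.s. and equals $\bar g\,Q_{12}$, which is deterministic. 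Therefore
\[
\hat\theta^{(k)}=\theta+\frac{c^{1/p}\big((\hat\theta^{(k-1)})^{1-1/p}-\theta^{1-1/p}\big)\theta}{\sigma^2+c^{1/p}\theta^{1-1/p}}=\theta\,\frac{c^{1/p}(\hat\theta^{(k-1)})^{1-1/p}+\sigma^2}{c^{1/p}\theta^{1-1/p}+\sigma^2},
\]
which is the claimed recurrence. Strict positivity is then immediate, because the numerator is at least $\sigma^2>0$.

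The only delicate point is justifying Lemma \ref{lem:Q12}-\ref{lem:Q12-ii} for a $g$ that may be signed, together with the degenerate case $k=1$, where the ODE has $\hat\theta^{(0)}=0$ and so falls outside the hypothesis $\theta>0$ of Lemma \ref{prop:ode_properties}. Both are handled as above: by linearity of the decomposition in Lemma \ref{lem:Q12}, and by the explicit formula for the solution when $k=1$.
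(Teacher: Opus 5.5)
Your proof is correct and follows the paper's argument. You proceed by induction on $k$ and use the explicit solution $u^{(1)}_t=(u_0^{1-p}+c(p-1)t)^{-1/(p-1)}$ for the base case. You then apply Lemma~\ref{lem:Q12}-\ref{lem:Q12-ii} to $g=(u^{(k)})^{p-1}-u^{p-1}$, whose limit is $\bar g=(\hat\theta^{(k-1)}/c)^{(p-1)/p}-(\theta/c)^{(p-1)/p}$, giving $\hat\theta^{(k)}=\theta+c\,\bar g\,Q_{12}$ and hence the recurrence.

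You also note that this $g$ is signed while Lemma~\ref{lem:Q12}-\ref{lem:Q12-ii} is stated for nonnegative $g$. This is harmless, either because its proof only uses boundedness and the existence of $\bar g$, or by splitting $g$ into two nonnegative pieces. The paper passes over this point silently, so your remark is a useful clarification rather than a change of route.
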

\begin{proof} 
We will proceed inductively. We first prove the result for $k=1$. Indeed, since $\hat{\theta}^{(0)}=0$, $u^{(1)}$ is explicitly computable, and is given by
$$
u^{(1)}_t = \frac{1}{\left[u_0^{-(p-1)}+c(p-1)t\right]^{1/(p-1)}},
$$
and $u_{eq}^{(1)}=0$. Then, thanks to \eqref{eq:gQ12} with $g(t)=c(({u}_{t}^{(1)})^{p-1}-\uu_t^{p-1})$ we have

\begin{equation*}
\hat{\theta}^{(1)} = \theta -c\uu_{eq}^{p-1}Q_{12} = \theta\frac{\sigma^2}{c(\Eq)^{p-1}+\sigma^2},
\end{equation*}
which is deterministic and strictly positive, as desired.

Now we proceed by induction. Let us assume that $ \hat{\theta}^{(k-1)}$ is deterministic and strictly positive, therefore $({u}^{(k)})^{p-1}-\uu^{p-1}$ satisfies the hypothesis of Lemma \ref{lem:Q12}-\ref{lem:Q12-ii} and then
$$
\hat{\theta}^{(k)} = \theta + c((u_{eq}^{(k)})^{p-1}-\uu_{eq}^{p-1})Q_{12} = \frac{\theta\sigma^2}{c\Eq^{p-1}+\sigma^2}+\frac{c\theta(\Eq^{(k)})^{p-1}}{c\Eq^{p-1}+\sigma^2}.
$$

Since $u_{eq}^{(k)}$ depends only on $\theta^{(k-1)}$, which is positive and deterministic, we conclude that $\theta^{(k)}$ is also positive and deterministic.
\end{proof}

In the what follows, we will need to control the variations of the function $x^r$ for different values of $r>0$.

\begin{lemma} [On the locally-Lipschitz property of $x^r$]\label{rem:on-x-to-power-r} Let $r>0$, and $x,y\in(0,\infty)$. Then it holds
$$
|x^r - y^r|   \leq  L_r(x,y)|x-y|,
$$
where
\begin{equation}\label{eq:def-L-r}
L_r(x,y)=\left\{
\begin{array}{cc} 
1/y^{1-r}&r\in(0,1)\\
r\max\{x,y\}^{r-1} & r>1.
\end{array}
\right.
\end{equation}

\end{lemma}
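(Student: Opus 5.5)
The plan is a case-by-case elementary argument based on the mean value theorem and on concavity of $x\mapsto x^r$, with the roles of $x$ and $y$ kept track of carefully. The bound for $r\in(0,1)$ is not symmetric in $x,y$, so a plain mean value argument will not cover every configuration.

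\emph{Case $r>1$.} By the mean value theorem, $x^r-y^r=r\xi^{r-1}(x-y)$ for some $\xi$ between $x$ and $y$. Since $r-1>0$, the map $s\mapsto s^{r-1}$ is nondecreasing on $(0,\infty)$. Hence $\xi^{r-1}\le \max\{x,y\}^{r-1}$, which gives $|x^r-y^r|\le r\max\{x,y\}^{r-1}|x-y|$ directly.

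\emph{Case $r\in(0,1)$, with $x\ge y$.} The function $s\mapsto s^r$ is concave, so it lies below its tangent line at $y$:
\[
x^r\le y^r+r\,y^{r-1}(x-y).
\]
Since $x^r\ge y^r$, this gives
\[
0\le x^r-y^r\le r\,y^{r-1}(x-y)\le y^{r-1}(x-y),
\]
using $r<1$. Alternatively, the mean value theorem works here too, because $\xi\ge y$ and $s^{r-1}$ is decreasing.

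\emph{Case $r\in(0,1)$, with $x<y$.} Here the mean value point $\xi$ may lie below $y$, so $\xi^{r-1}$ can exceed $y^{r-1}$. This is the only place needing care, and I expect it to be the main obstacle. Instead I will verify the needed inequality $y^r-x^r\le y^{r-1}(y-x)$ directly. Expanding the right-hand side as $y^r-xy^{r-1}$, the inequality is equivalent to $x^r\ge x\,y^{r-1}$. Dividing by $x>0$, this becomes $x^{r-1}\ge y^{r-1}$, which holds because $x<y$ and $r-1<0$. Combining the two subcases yields $|x^r-y^r|\le y^{r-1}|x-y|=|x-y|/y^{1-r}$, which is the claimed bound with $L_r(x,y)=1/y^{1-r}$.
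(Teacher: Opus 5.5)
Your proof is correct. For $r>1$ it is the paper's argument: the paper writes $x^r-y^r=r(x-y)\int_0^1(\lambda x+(1-\lambda)y)^{r-1}\,d\lambda$ instead of invoking the pointwise mean value theorem, but it bounds the integrand by the same $\max\{x,y\}^{r-1}$.

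For $r\in(0,1)$ the two routes differ. The paper keeps the integral representation and claims in one line that $\int_0^1 r(\lambda x+(1-\lambda)y)^{r-1}\,d\lambda\le y^{r-1}$, without splitting by the order of $x$ and $y$. As you rightly observe, this cannot come from the pointwise bound $(\lambda x+(1-\lambda)y)^{r-1}\le y^{r-1}$ when $x<y$, because then the integrand exceeds $y^{r-1}$. The step is justified by dropping $\lambda x\ge 0$, which gives $(\lambda x+(1-\lambda)y)^{r-1}\le\big((1-\lambda)y\big)^{r-1}$, and then using $\int_0^1 r(1-\lambda)^{r-1}\,d\lambda=1$. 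That treats both orderings at once, and it explains why the factor $r$ is kept inside the integral.

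You instead split into two subcases. For $x\ge y$ you use concavity (the tangent line at $y$), which even yields the sharper constant $r\,y^{r-1}$. For $x<y$ you reduce directly to $x^{r-1}\ge y^{r-1}$. Your version is longer but fully explicit at the one delicate point. The paper's version is shorter and uniform in $x,y$, but it leaves to the reader the estimate that actually makes it work.
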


\begin{proof} Notice that
$$
x^r - y^r = r(x-y)\int_0^1 (\lambda x + (1-\lambda)y)^{r-1}d\lambda.
$$
Therefore, for $r\in(0,1)$,
\begin{align*}
|x^r - y^r|   &\leq |x-y|\int_0^1  \frac{rd\lambda}{(\lambda x + (1-\lambda)y)^{1-r}}      \le   \frac{|x-y|}{y^{1-r}}.
\end{align*}
Meanwhile, for $r>1$, we have 
$$
|x^r - y^r|   \leq r\max\{x,y\}^{r-1} |x-y|.
$$
\end{proof}

\begin{lemma}
As $k$ goes to infinity the asymptotic-in-time estimator converges monotonically towards $\theta$.
\end{lemma}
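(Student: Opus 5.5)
The plan is to treat the recurrence of Lemma \ref{lemma:integrals-dynamics-vs-equilbrium} as a one-dimensional fixed-point iteration. Write
$$
\hat{\theta}^{(k)} = \phi(\hat{\theta}^{(k-1)}), \qquad \phi(x) := \theta\,\frac{c^{1/p}x^{1-1/p}+\sigma^2}{c^{1/p}\theta^{1-1/p}+\sigma^2}, \quad x\ge 0,
$$
and study $\phi$ on $[0,\theta]$. Since $p>1$, the exponent $1-1/p$ lies in $(0,1)$. Hence $\phi$ is continuous, strictly increasing and concave on $[0,\infty)$. Two values are immediate: $\phi(\theta)=\theta$, and $\phi(0)=\theta\sigma^2/(c^{1/p}\theta^{1-1/p}+\sigma^2)\in(0,\theta)$.

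\emph{Invariance and monotonicity.} Because $\phi$ is increasing and $\phi(\theta)=\theta$, we get $\phi([0,\theta])\subset[\phi(0),\theta]\subset[0,\theta]$. By induction from $\hat{\theta}^{(0)}=0$, every $\hat{\theta}^{(k)}$ lies in $[0,\theta]$. Monotonicity also follows by induction. We have $\hat{\theta}^{(1)}=\phi(0)>0=\hat{\theta}^{(0)}$, and if $\hat{\theta}^{(k)}\ge\hat{\theta}^{(k-1)}$, then applying the increasing map $\phi$ gives $\hat{\theta}^{(k+1)}\ge\hat{\theta}^{(k)}$. So the sequence is non-decreasing and bounded above by $\theta$. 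It therefore converges to some $\theta^*\in(0,\theta]$, and continuity of $\phi$ gives $\phi(\theta^*)=\theta^*$.

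\emph{Uniqueness of the fixed point.} This is the main point to check. It remains to show that $\theta$ is the only fixed point of $\phi$ in $(0,\theta]$. Consider $h(x):=\phi(x)-x$. It is strictly concave on $(0,\infty)$, with $h(0)>0$ and $h(\theta)=0$.

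A strictly concave function has at most two zeros. Suppose there were a second zero $x_0\in(0,\theta)$. Then $h$ would change sign from positive at $0$ to zero at $x_0$ and again at $\theta$. Concavity on $[0,\theta]$ forces $h\ge \min(h(0),h(\theta))\cdot$(linear interpolation)$\ge0$, and in fact $h(x_0)>0$ by strict concavity, since $x_0$ is a strict convex combination of $0$ and $\theta$ and $h(0)>0$, $h(\theta)=0$. This is a contradiction, so $\theta^*=\theta$.

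As an alternative route, one could compute $\phi'(\theta)=\frac{(1-1/p)c^{1/p}\theta^{1-1/p}}{c^{1/p}\theta^{1-1/p}+\sigma^2}<1$ and combine it with concavity. This gives $\phi(x)>x$ on $[0,\theta)$. It also yields the geometric rate $\theta-\hat{\theta}^{(k)}\le \bigl(\sup_{[\hat\theta^{(1)},\theta]}\phi'\bigr)^{k-1}(\theta-\hat{\theta}^{(1)})$, which is presumably what Remark \ref{rem:geometric-convergence-theta-k} comments on.
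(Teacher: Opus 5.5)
Your proof is correct, but it identifies the limit by a different mechanism than the paper.

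\textbf{Monotonicity.} This part is essentially shared. The paper writes $\hat{\theta}^{(k)}-\hat{\theta}^{(k-1)} = c^{1/p}Q_{12}\big((\hat{\theta}^{(k-1)})^{1-1/p}-(\hat{\theta}^{(k-2)})^{1-1/p}\big)$ and propagates the sign from $\hat{\theta}^{(1)}>\hat{\theta}^{(0)}=0$. That is exactly your observation that $\phi$ is increasing.

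\textbf{Identifying the limit.} Here the routes differ. The paper uses neither invariance of $[0,\theta]$ nor uniqueness of the fixed point. It proves a global contraction toward $\theta$ instead. It applies Lemma \ref{rem:on-x-to-power-r} with $y=\theta$: the secant slope of $x^{1-1/p}$ anchored at $\theta$ is at most $\theta^{-1/p}$ for every $x\ge 0$. This gives $|\hat{\theta}^{(k)}-\theta|\le (Q_{12}/u_{\mathrm{eq}})\,|\hat{\theta}^{(k-1)}-\theta|$, with $Q_{12}/u_{\mathrm{eq}} = c^{1/p}\theta^{1-1/p}/(c^{1/p}\theta^{1-1/p}+\sigma^2)<1$, hence $|\hat{\theta}^{(k)}-\theta|\le (Q_{12}/u_{\mathrm{eq}})^k\theta$.

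Your route is bounded monotone convergence, continuity, and the observation that $h=\phi-\mathrm{id}$ cannot vanish on $[0,\theta)$ because $h(0)>0=h(\theta)$. Plain concavity already suffices there; strictness is not needed. What each approach buys:
\begin{itemize}
\item Yours is purely qualitative and slightly more elementary, and it shows directly that every iterate stays below $\theta$.
\item The paper's gives the explicit geometric rate that Remark \ref{rem:geometric-convergence-theta-k} relies on, and it works without first locating the iterates in $[0,\theta]$.
\end{itemize}

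\textbf{Two side remarks that are inaccurate.} Neither is used in your main argument.

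First, $\phi'(\theta)<1$ together with concavity does not by itself give $\phi(x)>x$ on $[0,\theta)$. It only gives this near $\theta$: a concave $h$ with $h(\theta)=0$ and $h'(\theta)<0$ can become negative further left, e.g.\ $h(x)=(\theta-x)-M(\theta-x)^2$ with $M>1$. What actually does the job is $h(0)>0$, as in your main argument.

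Second, your rate $\sup_{[\hat\theta^{(1)},\theta]}\phi'=\phi'(\hat\theta^{(1)})$ need not be below $1$. For $c=\theta=1$, $p=2$, $\sigma^2=0.1$ one gets $\phi'(\hat\theta^{(1)})=1/(2\sigma\sqrt{1+\sigma^2})\approx 1.5$, so that bound can be vacuous. The correct uniform rate comes from the secant slope rather than the derivative. For $x\in[0,\theta)$, concavity gives $(\theta-\phi(x))/(\theta-x)\le c^{1/p}\theta^{1-1/p}/(c^{1/p}\theta^{1-1/p}+\sigma^2)$, which is precisely the paper's $Q_{12}/u_{\mathrm{eq}}$.
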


\begin{proof}
From Lemma \ref{lemma:integrals-dynamics-vs-equilbrium} we know that
\begin{align}\label{eq:diff_thetak_theta}
  \hat{\theta}^{(k)} - \theta  
  & =  c^{1/p}\left(\left( \hat{\theta}^{(k-1)}\right)^{1-1/p}-\theta^{1-1/p}\right)Q_{12}.
\end{align}
On the other hand, from Lemma \ref{rem:on-x-to-power-r}, since $\theta$ and $\hat{\theta}^{(k-1)}$ are positives and $p\ge1$:
\begin{align*}
\left| \left( \hat{\theta}^{(k-1)}\right)^{1-1/p} - \theta^{1-1/p} \right|
  &\le \frac{|\hat{\theta}^{(k-1)}-\theta|}{\theta^{1/p}}.
\end{align*}
Therefore, we obtain,
$$
|  \hat{\theta}^{(k)}(\omega) - \theta | \leq \frac{Q_{12}}{\uu_{eq}}|\hat{\theta}^{(k-1)}(\omega)-\theta|,
$$
and iterating this inequality we get:
\begin{equation}\label{eq:geometric-convergence-theta-k}
|  \hat{\theta}^{(k)}(\omega) - \theta | \leq \left(\frac{Q_{12}}{\uu_{eq}}\right)^k|\theta|,
\end{equation}
from where we can conclude as $Q_{12}/\uu_{eq}<1$. Indeed, from \eqref{eq:value_Q12},
$$
\frac{Q_{12}}{\Eq}  = \frac{\theta}{(\sigma^2+c(\Eq)^{p-1})\Eq}  < \frac{\theta}{c\left(\Eq\right)^{p}}=1.
$$

Finally, from \eqref{eq:diff_thetak_theta} we have
\[
\hat{\theta}^{(k)}-\hat{\theta}^{(k-1)}
= c^{1/p}\Big(\big(\hat{\theta}^{(k-1)}\big)^{1-1/p}
      -\big(\hat{\theta}^{(k-2)}\big)^{1-1/p}\Big)Q_{12}.
\]

Hence, $\hat{\theta}^{(k)}-\hat{\theta}^{(k-1)}$ has the same sign than $\hat{\theta}^{(k-1)}-\hat{\theta}^{(k-2)}$. Moreover, from Lemma \ref{lemma:integrals-dynamics-vs-equilbrium}, $\hat{\theta}^{(1)}>0=\hat{\theta}^{(0)}$, which implies in particular that $\hat{\theta}^{(2)}-\hat{\theta}^{(1)}>0$.  By induction, 
$\hat{\theta}^{(k)}-\hat{\theta}^{(k-1)}>0$ for all $k\ge1$. 
Hence $\{\hat{\theta}^{(k)}\}_{k\ge0}$  converges to $\theta$ monotonically.

\end{proof}
\begin{remark}\label{rem:geometric-convergence-theta-k}
Notice that the convergence of $\hat{\theta}^{(k)}$ towards $\theta$ is geometric with rate given in \eqref{eq:geometric-convergence-theta-k} by $\theta^{1-1/p}c^{1/p}/(\sigma^2+\theta^{1-1/p}c^{1/p})$. This hints that smaller values of $\theta$ or larger values of $\sigma^2$ should lead to faster convergence of the whole EM algorithm on $k$.
\end{remark}

Now we can proceed with the analysis of the continuously sampled estimator. We start with the proof of Proposition \ref{prop:thetaT-is-eventually-positive}.

\begin{proof}[Proof of Proposition \ref{prop:thetaT-is-eventually-positive}]
We proceed by induction. For $k=1$,  since  $\hat{\theta}^{(0,T)}=0$ and $u^{(1,T)}$ is strictly positive:
\begin{equation}\label{eq:def-hat-theta-1-t}
\hat{\theta}^{(1,T)} = \quo(T)+\frac{c\int_{0}^T \frac{({u}_{s}^{(1,T)})^{p-1}}{X_s}ds}{\int_0^T  \frac{1}{X_s^2}ds}> 
\quo(T).
\end{equation}
Thus, from Lemmas \ref{lem:convergence-of-QuoT}-\ref{lemma:integrals-dynamics-vs-equilbrium}, we have
\begin{equation*}
\liminf_{T\to\infty}\hat{\theta}^{(1,T)}(\omega)> 
\hat{\theta}^{(1)}>0.
\end{equation*}
It follows directly that there exists a random variable $\TT(\omega)$ such that 
 $$
 \hat{\theta}^{(1,T)}(\omega) > \frac{\hat{\theta}^{(1)}}{2}>0,\;\forall\, T\geq \TT(\omega).
 $$
 
Now we proceed with the inductive step. Assume for $k\geq 2$, that $\hat{\theta}^{(k-1,T)}$ is positive and bounded away from zero for $T\geq \TT(\omega)$. Then $u^{(k,T)}$ is well defined and strictly positive, so we can compute $\hat{\theta}^{(k,T)}$  as
\begin{equation*}
\begin{aligned}
   \hat{\theta}^{(k,T)}& =\quo(T)+\frac{c\int_{0}^T \frac{({u}_{s}^{(k,T)})^{p-1}}{X_s}ds}{\int_0^T  \frac{1}{X_s^2}ds},
\end{aligned}
\end{equation*}
but we know that for $T\geq \TT$, $\quo(T)$ is bounded away from zero, and the second term in the right-hand side is no negative, so $ \hat{\theta}^{(k,T)}$ is also  bounded away from zero. 
\end{proof}

\begin{lemma}\label{lemma:a-priori-control-estimator-surrogate} 
The continuously sampled estimator  is almost surely bounded when $T\to\infty$.
\end{lemma}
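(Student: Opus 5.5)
Throughout, ``bounded'' is understood as: for each fixed $k\ge1$, $\limsup_{T\to\infty}\hat{\theta}^{(k,T)}<\infty$ almost surely. The bound is built by induction on $k$, using the representation
\[
\hat{\theta}^{(k,T)} = \quo(T) + c\,\frac{\int_0^T (u_s^{(k,T)})^{p-1}X_s^{-1}\,ds}{\int_0^T X_s^{-2}\,ds},
\]
which is just the identity already used in the proof of Proposition \ref{prop:thetaT-is-eventually-positive}. Lemma \ref{lem:convergence-of-QuoT}-\ref{lem:convergenceAndpositivityQ} gives that $\quo(T)$ converges almost surely to a finite limit, so $\limsup_{T\to\infty}|\quo(T)|<\infty$. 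It therefore suffices to bound the second term.

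For this term I use a uniform-in-time bound on the ODE solution. For $T\ge \TT(\omega)$, Proposition \ref{prop:thetaT-is-eventually-positive} gives $\hat{\theta}^{(k-1,T)}\ge0$, and it is strictly positive when $k\ge2$. Then Lemma \ref{prop:ode_properties}-\ref{item:wellposedness_ODE} (with its explicit counterpart when the parameter is $0$) shows that $u^{(k,T)}$ is monotone and lies between $u_0$ and $(\hat{\theta}^{(k-1,T)}/c)^{1/p}$. Hence
\[
0< u_s^{(k,T)}\le \max\{u_0,\ (\hat{\theta}^{(k-1,T)}/c)^{1/p}\}=:B_{k,T}\quad\text{for all } s\in[0,T].
\]
Since $p>1$, this gives $(u_s^{(k,T)})^{p-1}\le B_{k,T}^{p-1}$. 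Therefore
\[
0\le c\,\frac{\int_0^T (u^{(k,T)}_s)^{p-1}X_s^{-1}ds}{\int_0^T X_s^{-2}ds}\le c\,B_{k,T}^{p-1}\,\frac{\int_0^T X_s^{-1}ds}{\int_0^T X_s^{-2}ds}.
\]
By Lemma \ref{lem:Q12}-\ref{lem:Q12-i}, the last ratio converges almost surely to $Q_{12}<\infty$.

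The induction then runs as follows. For $k=1$ we have $B_{1,T}=u_0$, which is deterministic, so $\limsup_T \hat{\theta}^{(1,T)}\le \lim_T\quo(T)+c\,u_0^{p-1}Q_{12}<\infty$ almost surely. If $\limsup_T\hat{\theta}^{(k-1,T)}<\infty$ almost surely, then $\limsup_T B_{k,T}<\infty$, and the same estimate bounds $\hat{\theta}^{(k,T)}$. On the full-measure set where all the countably many almost sure limits hold, this gives boundedness for every $k$ simultaneously.

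The main obstacle is that $u^{(k,T)}$ depends on $T$ through the random parameter $\hat{\theta}^{(k-1,T)}$. This rules out applying Lemma \ref{lem:Q12}-\ref{lem:Q12-ii} directly, since that lemma requires a fixed deterministic function $g$. The monotone-envelope bound $B_{k,T}$ is the device that avoids this: it replaces the $T$-dependent path by a constant that is controlled uniformly in $s$ and depends only on the previous iterate. A finer version of this comparison will be needed later to identify the limit; here only the upper bound is required.
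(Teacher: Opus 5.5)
Your proof is correct and follows essentially the paper's argument. It uses the same decomposition $\hat{\theta}^{(k,T)}=\quo(T)+c\int_0^T (u_s^{(k,T)})^{p-1}X_s^{-1}ds/\int_0^T X_s^{-2}ds$, the envelope bound $u^{(k,T)}\le \max\{u_0,(\hat{\theta}^{(k-1,T)}/c)^{1/p}\}$ from Lemma \ref{prop:ode_properties}, and the a.s.\ convergence of $\int_0^T X_s^{-1}ds/\int_0^T X_s^{-2}ds$ to $Q_{12}$. The paper instead linearizes $c(\hat{\theta}^{(k-1,T)}/c)^{(p-1)/p}$ via Young's inequality and iterates an affine recursion into an explicit geometric-sum bound, whereas your direct induction on the $\limsup$ is slightly cleaner and avoids that constant bookkeeping.
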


\begin{proof} 
Let us assume that $\TT$ is as in Lemma \ref{prop:thetaT-is-eventually-positive} so that, for any $T\geq \TT$, we have $\hat{\theta}^{(k-1,T)}$ is strictly positive and ${u}_{\cdot}^{(k,T)}$ is well defined. Then, from Lemma \ref{prop:ode_properties} and Young inequality, 
\begin{align*}
  \hat{\theta}^{(k,T)} & \leq |\quo(T)| + \frac{c\int_{0}^T \frac{({u}_{s}^{(k,T)})^{p-1}}{X_s}ds}{\int_0^T  \frac{1}{X_s^2}ds}    \\
  & \leq |\quo(T)| +  c\left(u_0\lor \left(\frac{\hat{\theta}^{(k-1,T)} }{c}\right)^{1/p} \right)^{p-1}\frac{\int_{0}^T \frac{1}{X_s}ds}{\int_0^T  \frac{1}{X_s^2}ds}      \\
    & \leq  |\quo(T)| +  \left(u_0^{p-1}+ c^{-(p-1)} + \hat{\theta}^{(k-1,T)}\right)\frac{\int_{0}^T \frac{1}{X_s}ds}{\int_0^T  \frac{1}{X_s^2}ds}.
\end{align*}
Iterating this inequality, it follows:
$$
  \hat{\theta}^{(k,T)} \leq \left( |\quo(T)| +  \left(u_0^{p-1}+ c^{-(p-1)} \right)\frac{\int_{0}^T \frac{1}{X_s}ds}{\int_0^T  \frac{1}{X_s^2}ds}\right)\frac{\left(\frac{\int_{0}^T \frac{1}{X_s}ds}{\int_0^T  \frac{1}{X_s^2}ds}\right)^k-1}{\frac{\int_{0}^T \frac{1}{X_s}ds}{\int_0^T  \frac{1}{X_s^2}ds}-1}.
$$
Since k is fixed, and the right-hand side in the last bound is almost surely convergent as $T\to\infty$, we conclude that $  \hat{\theta}^{(k,T)}$ is almost surely bounded as $T\to\infty$.
\end{proof}

We can now study the convergence of the continuously sampled estimator when $T$ and $k$ go to infinity.
\begin{proof}[Proof of Proposition \ref{e2:convergence-in-T}]

Notice that from Definition \ref{def:surrogate_definition}
\begin{equation}\label{eq:bound-for-error-in-T}
\left| \hat{\theta}^{(k,T)} -     \hat{\theta}^{(k)}\right| \leq  
  R_1(T) +    \left|\frac{c\int_{0}^T \frac{({u}_{s}^{(k,T)})^{p-1}-(u_s^{(k)})^{p-1}}{X_s}ds}{\int_0^T  \frac{1}{X_s^2}ds}\right| + R_2(T,k),
\end{equation}
where 
\[R_1(T):= \lim_{T\to0}  \frac{\sigma\int_0^T  \frac{1}{X_s}dW_s }{\int_0^T  \frac{1}{X_s^2}ds }\]
and 
\[R_2(T,k):= \left|\frac{c\int_{0}^T \frac{({u}_{s}^{(k)})^{p-1}-\uu_s^{p-1}}{X_s}ds}{\int_0^T  \frac{1}{X_s^2}ds}- \lim_{T\to\infty}\frac{c\int_{0}^T \frac{({u}_{s}^{(k)})^{p-1}-\uu_s^{p-1}}{X_s}ds}{\int_0^T  \frac{1}{X_s^2}ds}\right|.\]

From the proof of Lemma \ref{lem:Q12} and the law of large numbers for martingales (see, e.g. \cite[Theorem 3.4]{mao2007stochastic}), we have that
$$\frac{\sigma\int_0^T  \frac{1}{X_s}dW_s }{\int_0^T  \frac{1}{X_s^2}ds } \to 0, \quad \text{almost surely when $T\to\infty$.}$$
Thus, 
$\lim_{T\to\infty} R_1(T)=0$ almost surely. Moreover, by definition, for all fixed $k\ge 1$, we also have $\lim_{T\to\infty} R_2(T,k)=0$.

For the second term in the right-hand side of \eqref{eq:bound-for-error-in-T}, we consider $L_p({u}_{t}^{(k,T)},{u}_{t}^{(k)})$ defined in \eqref{eq:def-L-r} such that
$$
|({u}_{t}^{(k,T)})^{p-1}-(u_t^{(k)})^{p-1}| \leq L_p({u}_{t}^{(k,T)},{u}_{t}^{(k)})| {u}_{t}^{(k,T)}-u_t^{(k)}|
$$
Notice that for all $p>1$, $L_p({u}_{t}^{(k,T)},{u}_{t}^{(k)})$ can be bounded independently of the time $t$ (denoted by $L_p(k,T)$ in that case), and we have then
\begin{align*}
\left| \hat{\theta}^{(k,T)} -     \hat{\theta}^{(k)}\right| &\leq  R_1(T) +    \frac{cL_p(k,T)\int_{0}^T \frac{|{u}_{s}^{(k,T)}-u_s^{(k)}|}{X_s}ds}{\int_0^T  \frac{1}{X_s^2}ds} + R_2(k,T).
\end{align*}
Applying Lemma \ref{prop:ode_properties}-\ref{lem:distance-between-ode-solutions}, there exists a finite constant $C(\hat\theta^{(k-1)}, \hat\theta^{(k-1,T)},u_0,\omega)>0$, such that
\begin{align*}
\left| \hat{\theta}^{(k,T)} -     \hat{\theta}^{(k)}\right| &\leq  R_1(T) + \\
& +  \frac{cL_p(k,T)\int_{0}^T \frac{1}{X_s}ds}{\int_0^T  \frac{1}{X_s^2}ds}C(\hat\theta^{(k-1)}, \hat\theta^{(k-1,T)},u_0,\omega)\left| \hat{\theta}^{(k-1,T)} -     \hat{\theta}^{(k-1)}\right| +R_2(k,T)\\
&=:R_1(T) +   M(k,T)\left| \hat{\theta}^{(k-1,T)} -     \hat{\theta}^{(k-1)}\right| + R_2(k,T).
\end{align*}
Iterating, and recalling $\hat{\theta}^{(0,T)} = \hat{\theta}^{(0)}=0$, we obtain
$$
 \left| \hat{\theta}^{(k,T)}- \hat{\theta}^{(k)}\right|  \leq R_1(T)\sum_{j=0}^{k-1}\prod_{l=0}^{j-1}M(k-l,T) + \sum_{j=0}^{k-1}R_2(k-j,T)\prod_{l=0}^{j-1}M(k-l,T),
$$
where an empty product is interpreted as $1$. Recall that $k$ is fixed. Hence, the right-hand side of the last inequality contains only finitely many terms that converge to zero as $T\to\infty$ almost surely, provided that $M(k,T)$ does not explode as $T\to\infty$. Notice that, according to  Lemma \ref{prop:ode_properties}-\ref{lem:distance-between-ode-solutions} and Lema \ref{rem:on-x-to-power-r}
\begin{align*}
M(k,T)  
 &\leq  \left\{ 
\begin{array}{cc}
\frac{\int_{0}^T \frac{1}{X_s}ds}{\int_0^T  \frac{1}{X_s^2}ds}\frac{1}{(u_0 \wedge \Eq^{(k-1)})}  & p \in (1,2)\\
\frac{ \int_{0}^T \frac{1}{X_s}ds}{\int_0^T  \frac{1}{X_s^2}ds}(p-1)\max\{\|{u}^{(k,T)}\|_\infty,\|{u}^{(k)}\|_\infty\}^{p-2} \frac{1}{(u_0 \wedge \Eq^{(k-1)})^{p-1}}   & p > 2.
\end{array}\right.
\end{align*}
Therefore, for $p\in(0,1)$, $M(k,T)$ depends on $T$ only through the quotient ${\int_{0}^T \frac{1}{X_s}ds}/{\int_0^T  \frac{1}{X_s^2}ds}$, which is almost surely convergent and hence bounded as $T\to\infty$. In the case of $p>1$, $M(k,T)$ also depends on $T$ thorough $\|{u}^{(k,T)}\|_\infty$, but thanks to Lemma \ref{prop:ode_properties}
$$
\|{u}^{(k,T)}\|_\infty \leq u_0 \lor \Eq^{(k,T)} = u_0 \lor \left(\frac{\hat{\theta}^{(k-1,T)}}{c}\right)^{1/p},
$$
and from Lemma \ref{lemma:a-priori-control-estimator-surrogate} we conclude that also in this case  $M(k,T)$ is bounded as $T\to \infty$, and hence
$$\lim_{T\to\infty}\left| \hat{\theta}^{(k,T)} -     \hat{\theta}^{(k)}\right|=0, \;a.s.$$
Finally, since
\begin{align*}
\left| \hat{\theta}^{(k,T)} -    \theta\right|  &= \left| \hat{\theta}^{(k,T)} -     \hat{\theta}^{(k)}\right|+\left|    \hat{\theta}^{(k)}-\theta\right|,
\end{align*}
is clear that
$$
\lim_{k\infty}\lim_{T\to\infty}\left| \hat{\theta}^{(k,T)} -    \theta\right|=0.
$$
To obtain the central limit theorem for $\hat{\theta}^{(k,T)} $, just notice that
$$
\hat{\theta}^{(k,T)} -\theta+\frac{c\int_{0}^T \frac{\uu_s^{p-1}-({u}_{s}^{(k,T)})^{p-1}}{X_s}ds}{\int_0^T  \frac{1}{X_s^2}ds}=\quo(T)-\theta+\frac{c\int_{0}^T \frac{\uu_s^{p-1}}{X_s}ds}{\int_0^T  \frac{1}{X_s^2}ds}
$$
from Lemma \ref{lem:convergence-of-QuoT}-\ref{lem:tclQ}, the results follows.

\end{proof}

\subsection{On the discretely sampled estimator }\label{app:on-discrete-estimator}

\begin{lemma}[A priori control of the estimator]\label{lemma:a-priori-control-estimator}
The estimator $\hat{\theta}^{(k,T,\dt)}$ is stochastically bounded in $\dt\to0$ in the sense that:
$$
\lim_{M\to\infty}\sup_{\dt>0} \P\left(\left|\hat{\theta}^{(k,T,\dt)}\right|\geq M\right)=0.
$$
\end{lemma}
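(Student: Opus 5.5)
The plan is to bound $\hat{\theta}^{(k,T,\dt)}$ from above by an expression built from a fixed, finite number of data functionals that converge in probability as $\dt\to0$. Since each such functional has a finite limit in probability, it is tight uniformly in small $\dt$. We then argue by induction on $k$, mirroring Lemma \ref{lemma:a-priori-control-estimator-surrogate} for the continuously sampled estimator.

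Since $\hat\theta^{(k,T,\dt)}=\max\{F_k,0\}\ge0$, only an upper bound is needed. Introduce the discrete quotients
\[
A_\dt:=\frac{\sum_{i} X_{t_i}^{-1}\Delta t_i}{\sum_i X_{t_i}^{-2}\Delta t_i},\qquad \quo(T,\dt).
\]
By Lemma \ref{prop:ode_properties}-\ref{item:wellposedness_ODE}, the mean path of the E-step satisfies $0<u^{(k,T,\dt)}_t\le u_0\vee(\hat\theta^{(k-1,T,\dt)}/c)^{1/p}$. Hence, exactly as in the proof of Lemma \ref{lemma:a-priori-control-estimator-surrogate}, using $(a\vee b)^{p-1}\le a^{p-1}+b^{p-1}$ and Young's inequality on $(\hat\theta/c)^{(p-1)/p}\le C(1+\hat\theta)$, we obtain
\[
\hat\theta^{(k,T,\dt)}\le |\quo(T,\dt)|+C_{p,c,u_0}\,A_\dt\bigl(1+\hat\theta^{(k-1,T,\dt)}\bigr).
\]
Iterating from $\hat\theta^{(0,T,\dt)}=0$ gives $\hat\theta^{(k,T,\dt)}\le P_k(|\quo(T,\dt)|,A_\dt)$. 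Here $P_k$ is a fixed polynomial with nonnegative coefficients depending only on $k,p,c,u_0$.

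Next we show that $\quo(T,\dt)$ and $A_\dt$ are tight in $\dt$. The paths of $X$ are continuous and strictly positive on $[0,T]$ (Proposition \ref{prop:well-posedness-and-properties-NLSDE}). The Riemann sums therefore converge almost surely: $\sum_i X_{t_i}^{-j}\Delta t_i\to\int_0^T X_s^{-j}ds$ for $j=1,2$. The limits are a.s. finite, and the $j=2$ limit is a.s. strictly positive. Hence $A_\dt\to\int_0^TX_s^{-1}ds/\int_0^TX_s^{-2}ds$ a.s. For the numerator of $\quo$, the proof of Lemma \ref{cor:the-estimator-is-eventually-positive} already gives $\sum_i X_{t_i}^{-2}\Delta X_{t_i}\to\int_0^T X_s^{-2}dX_s$ in probability, since the integrand is continuous and adapted. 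Thus $\quo(T,\dt)\to\quo(T)$ in probability.

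A family converging in probability to an a.s. finite limit satisfies $\lim_{M\to\infty}\limsup_{\dt\to0}\P(|Y_\dt|\ge M)=0$. For $P_k$, we split according to whether $|\quo(T,\dt)|\le M'$ and $A_\dt\le M'$, which reduces its tightness to that of the two components.

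The main obstacle is the $\sup_{\dt>0}$ in the statement, which demands more than the $\limsup_{\dt\to0}$ that convergence in probability gives. For $\dt\ge\dt_0$ only finitely many grids are of interest, so each fixed $\dt$ yields an a.s. finite random variable. The coarse grids can thus be handled grid by grid for a fixed observation scheme. For an arbitrary family of grids, I would try to prove instead the uniform moment-type bound $A_\dt\le \sup_s X_s\cdot\sup_s X_s^{-1}$. This bound follows from $\sum X^{-1}\Delta t\le \sup X\sum X^{-2}\Delta t$, and the right-hand side is $\dt$-free and integrable by \eqref{eq:uniform-moments-in-finite-intervals}. For $\quo(T,\dt)$, I would aim for a $\dt$-uniform $L^1$ bound on $\sum_i X_{t_i}^{-2}\Delta X_{t_i}$, using the SDE decomposition together with Burkholder–Davis–Gundy and the negative moments in \eqref{eq:uniform-moments-in-finite-intervals}. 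For the denominator, I would use the lower bound $\sum_i X_{t_i}^{-2}\Delta t_i\ge T/\sup_s X_s^2$. These bounds together give a Markov-inequality control that is uniform in $\dt$.
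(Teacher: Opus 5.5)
Your proposal is correct and follows essentially the paper's proof. Both use the same pathwise recursion $\hat\theta^{(k,T,\dt)}\le|\quo(T,\dt)|+C\,A_\dt\bigl(1+\hat\theta^{(k-1,T,\dt)}\bigr)$, obtained from the ODE bound and a Young/H\"older linearization and iterated from $\hat\theta^{(0,T,\dt)}=0$, with tightness of $\quo(T,\dt)$ and $A_\dt$ deduced from their convergence in probability; your explicit treatment of $\sup_{\dt>0}$ (finitely many coarse regular grids) spells out what the paper leaves implicit in ``stochastically bounded''. One slip in your last paragraph: your inequality yields $A_\dt\le\sup_s X_s$ ($A_\dt$ is a weighted average of the $X_{t_i}$), not $A_\dt\le\sup_sX_s\cdot\sup_sX_s^{-1}$, which fails for paths staying near a level above $1$; this is harmless, since $\sup_s X_s$ is equally $\dt$-free.
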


\begin{proof}
Let us recall that 
\begin{equation}
    \hat{\theta}^{(k,T,\dt)} =\max\left\{  \frac{\sum_{i=0}^{n-1}\frac{\Delta X_{t_i}}{X_{t_i}^2} + c\sum_{i=0}^{n-1} \frac{({u}_{t_i}^{(k,T,\dt)})^{p-1}}{X_{t_i}}\Delta t_i}{  \sum_{i=0}^{n-1} X_{t_i}^{-2} \Delta t_i},\quad 0 \right\}.
\end{equation}
Then by triangular inequality and Lemma \ref{prop:ode_properties}:
\begin{equation*}
\begin{aligned}
  |\hat{\theta}^{(k,T,\dt)}|&\leq \left| \frac{\sum_{i=0}^{n-1}\frac{\Delta X_{t_i}}{X_{t_i}^2}}{  \sum_{i=0}^{n-1} X_{t_i}^{-2} \Delta t_i}\right|  +\left| \frac{ c\sum_{i=0}^{n-1} \frac{({u}_{t_i}^{(k,T,\dt)})^{p-1}}{X_{t_i}}\Delta t_i}{  \sum_{i=0}^{n-1} X_{t_i}^{-2} \Delta t_i}\right|     \\
  &\leq R_1(\dt)+R_2(\dt)\max(u_0,(\hat{\theta}^{(k-1,T,\dt)}/c)^{1/p})^{p-1},
\end{aligned}
\end{equation*}
where 
$$
R_1(\dt):= \frac{\left|\sum_{i=0}^{n-1}\frac{\Delta X_{t_i}}{X_{t_i}^2}\right|}{  \sum_{i=0}^{n-1} X_{t_i}^{-2} \Delta t_i}, \;\text{and}\;
R_2(\dt):= \frac{ c\sum_{i=0}^{n-1}X_{t_i}^{-1}\Delta t_i}{  \sum_{i=0}^{n-1} X_{t_i}^{-2} \Delta t_i}.
$$

Then, there exists a constant $C_p$ depending only on $p$ such that
\begin{equation*}
\begin{aligned}
  |\hat{\theta}^{(k,T,\dt)}|&\leq R_1(\dt)+C_p R_2(\dt)\left(\uu_0^{p-1}+(\hat{\theta}^{(k-1,T,\dt)}/c)^{(p-1)/p}\right)
\end{aligned}
\end{equation*}

Thus, from H\"older inequality we get for any $p\geq1$
\begin{equation}\label{eq:inductive-step-a-priori-bound-dt}
\begin{aligned}
  |\hat{\theta}^{(k,T,\dt)}|&\leq R_1(\dt)+C_p R_2(\dt)\left(1+\uu_0^{p-1}\right)+C_p R_2(\dt)\,  |\hat{\theta}^{(k-1,T,\dt)}|,
\end{aligned}
\end{equation}
where now $C_p$ depends on $p$ and $c$.

Iterating \eqref{eq:inductive-step-a-priori-bound-dt} $k$ times, we obtain 
$$
 |\hat{\theta}^{(k,T,\dt)}| \leq \left(R_1(\dt)+C_p R_2(\dt)\left(1+\uu_0^{p-1}\right)\right)\sum_{m=0}^{k-1}C_p^m\, R_2^m(\dt) + C_p^k\, R_2^k(\dt)\hat{\theta}^{(0,T,\dt)}.
$$
Notice that $R_1(\dt), R_2(\dt) $ are stochastically bounded, since both converge in probability to a finite random variable when $\dt\to0$. Then we conclude from the last bound that $ \hat{\theta}^{(k,T,\dt)} $ is stochastically bounded as it is dominated by the finite sum and product of stochastically bounded random variables.

\end{proof}

\begin{proof}[Proof of Proposition \ref{e1:convergence-in-dt} ] 
We want to study
$$\lim_{\dt\to0}\P\left( \left| \hat{\theta}^{(k,T,\dt)}- \hat{\theta}^{(k,T)}\right| > \epsilon , T\geq\TT  \right).$$
Notice that in the event $T\geq \TT$, $ \hat{\theta}^{(k,T)} > \cotaMin$ for all $k\geq1$. Notice also that $u^{(k,T,\dt)}$ depend implicit on $\dt$ and $T$ through $  \hat{\theta}^{(k-1,T,\dt)}  $. Additionally, since we are under Assumption \ref{hip:sign_theta},  $u^{(k,T,\dt)}$ is well defined for all $t\ge 0$. Then, 
\begin{equation}\label{eq:first-partition-convergence-1}
\begin{aligned}
 \left| \hat{\theta}^{(k,T,\dt)}- \hat{\theta}^{(k,T)}\right| &\leq \left|\frac{\sum_{i=0}^{n-1}\frac{\Delta X_{t_i}}{X_{t_i}^2}}{  \sum_{i=0}^{n-1} X_{t_i}^{-2} \Delta t_i} - c\frac{ \sum_{i=0}^{n-1} \frac{({u}_{t_i}^{(k,T,\dt)})^{p-1}}{X_{t_i}}\Delta t_i}{  \sum_{i=0}^{n-1} X_{t_i}^{-2} \Delta t_i} -\hat{\theta}^{(k,T)}\right|  \\
 &\leq  \left|\frac{\sum_{i=0}^{n-1}\frac{\Delta X_{t_i}}{X_{t_i}^2}}{  \sum_{i=0}^{n-1} X_{t_i}^{-2} \Delta t_i} - \frac{\int_0^T\frac{dX_t}{X_t^2}}{\int_0^T\frac{dt}{X_t^2}} \right|  
 + c\left|\frac{ \sum_{i=0}^{n-1} \frac{({u}_{t_i}^{(k,T,\dt)})^{p-1}}{X_{t_i}}\Delta t_i}{  \sum_{i=0}^{n-1} X_{t_i}^{-2} \Delta t_i} - \frac{\int_0^T\frac{(u_t^{(k,T)})^{p-1}}{X_t}dt}{\int_0^T\frac{dt}{X_t^2}} \right|\\
 &:= R_1(\dt)+ R_2(\dt,k).
\end{aligned}
\end{equation}

From the properties of stochastic integrals (see, e.g., \cite[Sec. I.7, II.4]{Protter1992}) and the continuous mapping theorem, $R_1(\dt)$ converges to zero in probability when $\dt$ converges to zero . Meanwhile, for $R_2(\dt,k)$ we have:
\begin{equation}\label{eq:second-deconposition-convergence-in-dt}
\begin{aligned}
 R_2(\dt,k)
  & \leq
  \left|\frac{1  }{  \sum_{i=0}^{n-1} X_{t_i}^{-2} \Delta t_i} \right|  \sum_{i=0}^{n-1} \frac{|({u}_{t_i}^{(k,T,\dt)})^{p-1}-({u}_{t_i}^{(k,T)})^{p-1}|}{X_{t_i}}\Delta t_i    + R_{2,1}(\dt,k),
\end{aligned}
\end{equation}
where 
\begin{align*}
R_{2,1}(\dt,k):=&\left|\frac{ \sum_{i=0}^{n-1} \frac{({u}_{t_i}^{(k,T)})^{p-1}}{X_{t_i}}\Delta t_i -\int_0^T\frac{(u_t^{(k,T)})^{p-1}}{X_t}dt }{  \sum_{i=0}^{n-1} X_{t_i}^{-2} \Delta t_i} \right| \\  
& \quad  +\left|\int_0^T\frac{(u_t^{(k,T)})^{p-1}}{X_t}dt\right|\left|\frac{ 1}{  \sum_{i=0}^{n-1} X_{t_i}^{-2} \Delta t_i} - \frac{1}{\int_0^T\frac{dt}{X_t^2}} \right|.
\end{align*}

Similar to previous arguments, for almost all $\omega\in\Omega$, $t\mapsto X_{t}(\omega)$ is a continuous and positive function, hence its Riemann sums converge to Riemann integrals when the size of the partition converges to zero (see, e.g., \cite[Sec. I.7]{Protter1992}). Therefore, for any $k\geq0$ and fixed $T>0$, $R_{2,1}(\dt,k)$ converges to zero almost surely.

We notice that, see Lemma \ref{rem:on-x-to-power-r} above for details, there exists  $L_p(t_i,k,T,\dt)$ such that
\[
|({u}_{t_i}^{(k,T,\dt)})^{p-1}-({u}_{t_i}^{(k,T)})^{p-1}| 
\leq L_p(t_i,k,T,\dt) |{u}_{t_i}^{(k,T,\dt)}-{u}_{t_i}^{(k,T)}|.
\]
Moreover,  thanks to Lemma \ref{prop:ode_properties}, for any $p\ge1$, $L_p(\cdot)$ can be bounded independent of the time $t_i$ (denoted by $L_p(k,T,\dt)$ in that case) given by
$$
L_p(k,T,\dt)  = \left\{
\begin{array}{cc}
1/\min\left\{u_0,(\hat{\theta}^{(k-1,T)}/c)^{1/p}\right\}^{2-p} ,&\text{ if }p\in(1,2)\\
1,&\text{ if }p=2\\
(p-1)\max\left\{u_0,(\hat{\theta}^{(k-1,T,\dt)}/c,(\hat{\theta}^{(k-1,T)}/c)^{1/p} \right\}^{p-2},&\text{ if }p>2.
\end{array}
\right.
$$
Notice that $ L_p(k,T,\dt)$ does not depend on $\dt$ if $p\leq 2$, and in the event $\{T\geq\TT\}$, $\hat{\theta}^{(k-1,T)}$ is bounded away from zero, then for $p\leq 2$, $ L_p(k,T,\dt)\ind{T\geq\TT}$ is stochastically bounded. In the case $p>2$, in Lemma \ref{lemma:a-priori-control-estimator} we prove that $\hat{\theta}^{(k-1,T,\dt)}$ is stochastically bounded in $\dt$, whereas  Lemma \ref{lemma:a-priori-control-estimator-surrogate} states that $\hat{\theta}^{(k-1,T)}$ is almost surely bounded, hence $L_p(k,T,\dt)$ is stochastically bounded too, when $p>2$.

Then, for the first term in the right-hand side of \eqref{eq:second-deconposition-convergence-in-dt} we have
\begin{align*}
 \left|\frac{1  }{  \sum_{i=0}^{n-1} X_{t_i}^{-2} \Delta t_i} \right|&  \sum_{i=0}^{n-1} \frac{|({u}_{t_i}^{(k,T,\dt)})^{p-1}-({u}_{t_i}^{(k,T)})^{p-1}|}{X_{t_i}}\Delta t_i\\ 
 & \leq     \left|\frac{1  }{  \sum_{i=0}^{n-1} X_{t_i}^{-2} \Delta t_i} \right|  \sum_{i=0}^{n-1} \frac{ L_p(k,T,\dt) |{u}_{t_i}^{(k,T,\dt)}-{u}_{t_i}^{(k,T)}|}{X_{t_i}}\Delta t_i.
\end{align*}
Using the bound \eqref{eq:distance-sols-ode} for the distance between solutions of the ODE, there exists\\
$C(\hat\theta^{(k-1,T)},\hat\theta^{(k-1,T,\dt)} ,u_0)>0$, such that we obtain
\begin{align*}
 &\left|\frac{1  }{  \sum_{i=0}^{n-1} X_{t_i}^{-2} \Delta t_i} \right|  \sum_{i=0}^{n-1} \frac{|({u}_{t_i}^{(k,T,\dt)})^{p-1}-({u}_{t_i}^{(k,T)})^{p-1}|}{X_{t_i}}\Delta t_i   \\
 & \qquad \leq \left|\frac{1  }{  \sum_{i=0}^{n-1} X_{t_i}^{-2} \Delta t_i} \right|  \sum_{i=0}^{n-1} \frac{\Delta t_i}{X_{t_i}}  C(\hat\theta^{(k-1,T)},\hat\theta^{(k-1,T,\dt)} ,u_0, \omega)| \hat{\theta}^{(k-1,T,\dt)}- \hat{\theta}^{(k-1,T)}|\\
 & \qquad =: M(k,\dt)| \hat{\theta}^{(k-1,T,\dt)}- \hat{\theta}^{(k-1,T)}|,
\end{align*}
where $M(k,\dt)$ is is stochastically bounded.  

Putting everything together in \eqref{eq:first-partition-convergence-1}, we will obtain:

$$
 \left| \hat{\theta}^{(k,T,\dt)}- \hat{\theta}^{(k,T)}\right|  \leq R_1(\dt) + R_{2,1}(k,\dt) + M(k,\dt)| \hat{\theta}^{(k-1,T,\dt)}- \hat{\theta}^{(k-1,T)}|,
$$
where for any $k$, $R_{2,1}(k,\dt)$ converges to zero almost surely, and $ R_1(\dt)$ converges to zero in probability. Iterating, and recalling $\hat{\theta}^{(0,T,\dt)} = \hat{\theta}^{(0,T)}=0$, we obtain
$$
 \left| \hat{\theta}^{(k,T,\dt)}- \hat{\theta}^{(k,T)}\right|  \leq R_1(\dt)\sum_{j=0}^{k-1}\prod_{l=0}^{j-1}M(k-l,\dt) + \sum_{j=0}^{k-1}R_{2,1}(k-j,\dt)\prod_{l=0}^{j-1}M(k-l,\dt),
$$
where an empty product is interpreted as $1$. Recall that $k$ is fixed, so in the right-hand side of the last inequality,  there are a finite number of products between random variables that are stochastically bounded and random variables that converge to zero almost surely or in probability. Therefore, the whole right-hand side converges to zero in probability, as desired.\\
\end{proof}

%%%%%%%%%%%%%%%%%%%%%%%%%%%%%%%%%%%%%
%%%%%%%%%%% BACK MATTER %%%%%%%%%%%%%%%%%%%

\paragraph{Acknowledgments:}
The authors gratefully acknowledge the hospitality of INRIA Université Côte d'Azur during a research visit, in the course of which part of this work was developed. We also thank Mireille Bossy for the constructive discussions during the preparation of this article.
\paragraph{Fundings:} 
The authors were partially supported by INRIA Associated Team SWAM and Programa Regional MathAmdSud AMSUD 240054, \emph{Stochasticity \& Chaos in Multiscale Phenomena}. H.O. has been also partially supported by FONDECYT Regular Nº1242001 and  ANID-Exploration 13220168. E.G. has been also partially supported by ANID-Doctoral Scholarship Nº21221664 and Adventist University of Chile PIR Nº274.  

\bibliographystyle{plain}
\bibliography{Bibliography}

\begin{thebibliography}{10}

\bibitem{Amorino2023}
C.~Amorino, A.~Heidari, V.~Pilipauskait{\.e}, and M.~Podolskij.
\newblock Parameter estimation of discretely observed interacting particle
  systems.
\newblock {\em Stochastic Processes and their Applications}, 158:1--40, 2023.

\bibitem{Baltazar2010}
F.~Baltazar-Larios and M.~S{\o}rensen.
\newblock Maximum likelihood estimation for integrated diffusion processes.
\newblock In {\em Contemporary Quantitative Finance}, pages 407--423. Springer,
  2010.

\bibitem{10.3150/bj/1116340291}
B.~Bibby, I.~Skovgaard, and M.~S{\o}rensen.
\newblock {Diffusion-type models with given marginal distribution and
  autocorrelation function}.
\newblock {\em Bernoulli}, 11(2):191 -- 220, 2005.

\bibitem{Bossy:2019aa}
M.~Bossy, J.~Fontbona, and H.~Olivero.
\newblock Synchronization of stochastic mean field networks of {Hodgkin-Huxley}
  neurons with noisy channels.
\newblock {\em Journal of Mathematical Biology}, 78(6):1771--1820, 2019.

\bibitem{Bossy2022}
M.~Bossy, J.~Jabir, and K.~Mart{\'\i}nez.
\newblock Instantaneous turbulent kinetic energy modelling based on lagrangian
  stochastic approach in cfd and application to wind energy.
\newblock {\em Journal of Computational Physics}, 464:110929, 2022.

\bibitem{CarmonaDelarue2018}
R.~Carmona and F.~Delarue.
\newblock {\em Probabilistic Theory of Mean Field Games with Applications I}.
\newblock Springer, 2018.

\bibitem{CarmonaDelarue2018b}
R.~Carmona and F.~Delarue.
\newblock {\em Probabilistic Theory of Mean Field Games with Applications {II}:
  {M}ean Field Games with Common Noise and Master Equations}.
\newblock Springer, 2018.

\bibitem{ChaintronDiez2022a}
L.~Chaintron and A.~Diez.
\newblock Propagation of chaos: {A} review of models, methods and applications.
  {I}. {M}odels and methods.
\newblock {\em Kinetic and Related Models}, 15(6):895--1015, 2022.

\bibitem{ChaintronDiez2022b}
L.~Chaintron and A.~Diez.
\newblock Propagation of chaos: {A} review of models, methods and applications.
  {II}. {A}pplications.
\newblock {\em Kinetic and Related Models}, 15(6):1017--1173, 2022.

\bibitem{chen2025}
X.~Chen, G.~dos Reis, and W.~Stockinger.
\newblock Wellposedness, exponential ergodicity and numerical approximation of
  fully super-linear mckean-vlasov sdes and associated particle systems.
\newblock {\em Electronic Journal of Probability}, 30(23):1--50, 2025.

\bibitem{colombani2023propagation}
L.~Colombani and P.~Le~Bris.
\newblock Propagation of chaos in mean field networks of fitzhugh-nagumo
  neurons.
\newblock {\em Mathematical Neuroscience and Applications}, 3, 2023.

\bibitem{Cordero:2026aa}
F.~Cordero, Ch. Jorquera, H.~Olivero, and L.~Videla.
\newblock Wright--fisher kernels: from linear to non-linear dynamics,
  ergodicity and mckean--vlasov scaling limits.
\newblock {\em Electronic Journal of Probability}, 31:1--49, 2026.

\bibitem{crevat2019mean}
J.~Crevat.
\newblock Mean-field limit of a spatially-extended fitzhugh-nagumo neural
  network.
\newblock {\em Kinetic and Related Models}, 12(6):1329--1358, 2019.

\bibitem{dohnal1987estimating}
G.~Dohnal.
\newblock On estimating the diffusion coefficient.
\newblock {\em Journal of Applied Probability}, 24(1):105--114, 1987.

\bibitem{FournieTalay1991}
{\'E}.~Fourni{\'e} and D.~Talay.
\newblock Application de la statistique des diffusions {\`a} un mod{\`e}le de
  taux d'int{\'e}r{\^e}t.
\newblock {\em Finance}, 12(2):79--111, 1991.

\bibitem{GenonCatalotLaredo2021}
V.~Genon-Catalot and C.~Laredo.
\newblock Parametric inference for small variance and long time horizon
  {M}c{K}ean--{V}lasov diffusion models.
\newblock {\em Electronic Journal of Statistics}, 15(2):5811--5854, 2021.

\bibitem{hausler2015stable}
E.~H{\"a}usler and H.~Luschgy.
\newblock {\em Stable convergence and stable limit theorems}, volume~74.
\newblock Springer, 2015.

\bibitem{HU20101030}
Y.~Hu and D~Nualart.
\newblock Parameter estimation for fractional ornstein--uhlenbeck processes.
\newblock {\em Statistics \& Probability Letters}, 80(11):1030--1038, 2010.

\bibitem{karatzas1991}
I.~Karatzas and S.~Shreve.
\newblock {\em Brownian Motion and Stochastic Calculus}.
\newblock Springer, New York, NY, 2nd edition, 1991.

\bibitem{kumar2021explicit}
C.~Kumar and Neelima.
\newblock On explicit milstein-type scheme for mckean--vlasov stochastic
  differential equations with super-linear drift coefficient.
\newblock {\em Electronic Journal of Probability}, 26:1--32, 2021.

\bibitem{kumar2022well}
C.~Kumar, Neelima, C.~Reisinger, and W.~Stockinger.
\newblock Well-posedness and tamed schemes for mckean--vlasov equations with
  common noise.
\newblock {\em The Annals of Applied Probability}, 32(5):3283--3330, 2022.

\bibitem{lambert2026evolution}
A.~Lambert, H.~Leman, H.~Morlon, and J.~Tchouanti.
\newblock Evolution of a trait distributed over a large fragmented population:
  Propagation of chaos meets adaptive dynamics.
\newblock {\em Journal of Mathematical Biology}, 92(5):70, 2026.

\bibitem{LiuQiao2022}
M.~Liu and H.~Qiao.
\newblock Parameter estimation of path-dependent {M}c{K}ean--{V}lasov
  stochastic differential equations.
\newblock {\em Acta Mathematica Scientia}, 42(3):876--886, 2022.

\bibitem{EvaLocherbach2015}
E.~L{\"o}cherbach.
\newblock Ergodicity and speed of convergence to equilibrium for diffusion
  processes.
\newblock Technical report, CNRS UMR 8088, D{\'e}partement de
  Math{\'e}matiques, Universit{\'e} de Cergy-Pontoise, November 2015.
\newblock Lecture Notes.

\bibitem{mao2007stochastic}
X.~Mao.
\newblock {\em Stochastic differential equations and applications}.
\newblock Elsevier, 2007.

\bibitem{McKean1966}
H.~McKean.
\newblock A class of markov processes associated with nonlinear parabolic
  equations.
\newblock {\em Proceedings of the National Academy of Sciences of the United
  States of America}, 56(6):1907--1911, 1966.

\bibitem{Meleard:1996aa}
S.~M{\'e}l{\'e}ard.
\newblock Asymptotic behaviour of some interactive particle systems;
  {M}c{K}ean-{V}lasov and {B}oltzmann models.
\newblock {\em Probabilistic Models for Nonlinear Partial Differential
  Equations}, 1627:42--95, 1996.

\bibitem{Miao2004}
W.~Miao.
\newblock {\em Quadratic variation estimators for diffusion models in finance}.
\newblock PhD thesis, University Park Los Angeles, USA, 2004.
\newblock AAI3140520.

\bibitem{Pavliotis2022}
G.~Pavliotis and A.~Zanoni.
\newblock Eigenfunction martingale estimators for interacting particle systems
  and their mean field limit.
\newblock {\em SIAM Journal on Applied Dynamical Systems}, 21(4):2338--2370,
  2022.

\bibitem{Pavliotis2024}
G.~Pavliotis and A.~Zanoni.
\newblock A method of moments estimator for interacting particle systems and
  their mean field limit.
\newblock {\em SIAM/ASA Journal on Uncertainty Quantification}, 12(2):262--288,
  2024.

\bibitem{Pavliotis2025}
G.~Pavliotis and A.~Zanoni.
\newblock Linearization of ergodic {McKean} {SDEs} and applications.
\newblock {\em Nonlinearity}, 38(8):085008, 2025.

\bibitem{perko2013differential}
L.~Perko.
\newblock {\em Differential equations and dynamical systems}, volume~7.
\newblock Springer Science \& Business Media, 2013.

\bibitem{pope1994lagrangi}
S.B. Pope.
\newblock Lagrangi-an pdf methods for turbulent flows.
\newblock {\em Annu. Rev. Fluid Mech}, 23:63, 1994.

\bibitem{Protter1992}
P.~Protter.
\newblock {\em Stochastic Integration and Differential Equation}.
\newblock Springer-Verlag, Berlin, Heidelberg, second edition, 1992.

\bibitem{quininao2020clamping}
C.~Quininao and J.~Touboul.
\newblock Clamping and synchronization in the strongly coupled fitzhugh--nagumo
  model.
\newblock {\em SIAM journal on applied dynamical systems}, 19(2):788--827,
  2020.

\bibitem{RenWu2021}
P.~Ren and J.~Wu.
\newblock Least squares estimation for path-distribution dependent stochastic
  differential equations.
\newblock {\em Applied Mathematics and Computation}, 410:126457, 2021.

\bibitem{Sharrock2023}
L.~Sharrock, N.~Kantas, P.~Parpas, and G.~Pavliotis.
\newblock Online parameter estimation for the {M}c{K}ean--{V}lasov stochastic
  differential equation.
\newblock {\em Stochastic Processes and their Applications}, 162:481--546,
  2023.

\bibitem{zbMATH00195091}
A.~Skorokhod.
\newblock {\em Asymptotic methods in the theory of stochastic differential
  equations. {Transl}. from the {Russian} by {H}. {H}. {McFaden}, transl. ed.
  by {Ben} {Silver}}, volume~78 of {\em Transl. Math. Monogr.}
\newblock Providence, RI: American Mathematical Society, 1989.

\bibitem{Sznitman1991}
A.~Sznitman.
\newblock Topics in propagation of chaos.
\newblock In {\em \'Ecole d'\'Et\'e de Probabilit\'es de Saint-Flour
  XIX---1989}, volume 1464 of {\em Lecture Notes in Mathematics}, pages
  165--251. Springer, 1991.

\bibitem{videla2025persistence}
L.~Videla, M.~Tejo, C.~Qui{\~n}inao, P.~Marquet, and R.~Rebolledo.
\newblock Persistence and neutrality in interacting replicator dynamics.
\newblock {\em Journal of Mathematical Biology}, 90(2):15, 2025.

\bibitem{Wen2016}
J.~Wen, X.~Wang, S.~Mao, and X.~Xiao.
\newblock Maximum likelihood estimation of mckean--vlasov stochastic
  differential equation and its application.
\newblock {\em Applied Mathematics and Computation}, 274:237--246, 2016.

\end{thebibliography}
\end{document}